\documentclass{article}
\usepackage{amsmath,amsfonts,epsfig}
\usepackage{amsmath,amsfonts,amssymb}
\usepackage{booktabs}
\usepackage{bm}
\usepackage{amsmath,amsfonts,epsfig}
\usepackage{graphics}
\usepackage{supertabular}
\usepackage{amsmath,amsfonts,amssymb}
\numberwithin{equation}{section}
\usepackage{amsthm}
\usepackage{mathtools}
\usepackage{amsfonts}
\usepackage{cite}
\usepackage{graphicx}
\usepackage{epstopdf}
\usepackage[numbers]{natbib}
\usepackage{float,epsfig, floatflt,here}
\usepackage{color}
\usepackage[colorlinks=true,citecolor=blue,linkcolor=blue]{hyperref}
\usepackage{fancyhdr}
\usepackage{etoolbox}
\usepackage{ dsfont }
\usepackage{accents}
\usepackage{bm}

\newtheorem{definition}{Definition}[section]

\newcommand{\norm}[1]{\| #1 \|}
\newcommand{\tnorm}[1]{{\left\vert\kern-0.25ex\left\vert\kern-0.25ex\left\vert #1\right\vert\kern-0.25ex\right\vert\kern-0.25ex\right\vert}}
\newcommand{\jump}[1]{\left[ #1 \right]}

\newcommand{\vertiii}[1]{{\left\vert\kern-0.25ex\left\vert\kern-0.25ex\left\vert #1
		\right\vert\kern-0.25ex\right\vert\kern-0.25ex\right\vert}}
\newcommand{\avg}[1]{\{\{ #1 \}\}_{e}}

\newtheorem{thm}{Theorem}[section]

\newtheorem{lemma}{Lemma}[section]

\newtheorem{rem}{Remark}[section]

\newtheorem{cor}{Corollary}[section]

\newtheorem{exm}{Example}[section]

\numberwithin{figure}{section}
\numberwithin{table}{section}
\usepackage{subcaption}
\title{\normalsize Numerical analysis of the second-order time-dependent saddle point Maxwell system via a parameter-free discontinuous Galerkin method: The first optimal ${\bf L}^{2}$-norm error estimates}
\date{}
\author{{\normalsize Achyuta Ranjan Dutta Mohapatra}\thanks{Department of Mathematics,
		Indian Institute of Technology Guwahati, North Guwahati- 781039, India (\texttt{\tt  achyutar@iitg.ac.in}).}\and
	{\normalsize Bhupen Deka }\thanks{Corresponding author, Department of Mathematics,
		Indian Institute of Technology Guwahati, North Guwahati- 781039, India ({\tt bdeka@iitg.ac.in}).}}

\date{}
\begin{document}
	\maketitle
	\begin{abstract}
		We present a novel parameter-free discontinuous Galerkin (dG) finite element method (FEM) for the time-dependent Maxwell system formulated as a saddle point problem. 
		We establish the stability of the proposed semi-discrete problem and derive optimal error estimates in energy and \( {\bf L}^{2} \) norms for the electric field variable, as well as in \( L^{2} \) norm for the potential function. To the best of our knowledge, this work provides the first optimal \( {\bf L}^{2} \)-norm error analysis for the second-order time-dependent saddle point Maxwell equations using any variants of FEMs. Additionally, we propose several complete discrete time-integrators and verify the optimal convergence results through examples in both 2D and 3D setups.
	\end{abstract}	
	\noindent {\em Key words.} Maxwell equations, parameter-free dG methods, optimal error estimates, implicit schemes.
	\vspace{.01in}
	
	\noindent {\em AMS Subject Classifications(2020)}. 65M60, 65N15, 35L20.
	
	\section{\normalsize Introduction}\label{sec1}
	The electromagnetic phenomena are primarily governed by the Maxwell equations, which are a system of first-order curl-div systems (cf. \cite{monk1992analysis,chen2000finite}) posed in a domain $\Omega\times (0, T)$ and given as follows:
	\begin{eqnarray}\label{model1}\left\{
		\begin{array}{ll}
			\alpha^{-1}{\bf H}_{t}=-\nabla\times{\bf E},\\
			\nabla\times{\bf H}=\epsilon{\bf E}_{t}-{\bf J},\\
			\nabla\cdot(\epsilon {\bf E}) = \rho,\\
			\nabla\cdot(\alpha {\bf H}) = 0.
		\end{array}
		\right.\end{eqnarray}
	The corresponding variables \({\bf E}\), \({\bf H}\), \({\bf J}\), and $\rho$ represent the electric field, magnetic field, vector current density, and scalar charge density functions, respectively. Further \(\alpha\) and \(\epsilon\) relate to the inverse of magnetic permeability and electric permittivity, respectively, and are assumed to be constants throughout the domain. 	
	Further \(\Omega \subset \mathbb{R}^{d}\, (d=2,3)\) represents a bounded convex polytopal domain with its boundary denoted as $\partial\Omega$. The notation $T$ has the notion of final time.
	
	Upon elimination of the magnetic field variable ${\bf H}$ from the Maxwell system \eqref{model1}, one gets the following second-order space-time Maxwell system posed in the domain $\Omega\times(0, T)$ supplemented with appropriate initial and boundary conditions given as
	\begin{eqnarray}\label{model2}
		\left\{
		\begin{array}{ll}
			\epsilon{\bf E}_{tt}+\nabla\times (\alpha\nabla\times {\bf E})={\bf F}\; \mbox{in}~\Omega\times (0,T), \\
			\nabla\cdot(\epsilon {\bf E}) = \rho\; \mbox{in}~\Omega\times (0,T), \\
			{\bf E}({\bf x},0)={\bf u}({\bf x}),\,\,
			{\bf E}_t({\bf x},0)={\bf v}({\bf x})\;\;\mbox{on}\; \Omega,\\
			\boldsymbol{\eta}\times{\bf E}=0\;\;\mbox{on} ~\partial \Omega\times (0,T).
		\end{array}
		\right.
	\end{eqnarray}
	
	In this context, \(\boldsymbol{\eta}\) denotes the outward unit normal vector to the boundary \(\partial \Omega\). The functions \(\mathbf{u} : \Omega \rightarrow \mathbb{R}^{d}\) and \(\mathbf{v} : \Omega \rightarrow \mathbb{R}^{d}\) represent the initial functions, while \({\bf F} := {\bf J}_t \in [L^{2}(\Omega)]^{d}\).
	
	To relax the divergence condition mentioned in the second equation of \eqref{model2}, we follow the approach outlined in \cite{chen2000finite}. We introduce a Lagrange multiplier \(P\), which transforms the problem into a saddle point system expressed as follows:
	\begin{eqnarray}\label{model3}
		\left\{
		\begin{array}{ll}
			\epsilon{\bf E}_{tt}+\nabla\times (\alpha\nabla\times {\bf E})-\epsilon\nabla P={\bf F}\; \mbox{in}~\Omega\times (0,T), \\
			\nabla\cdot(\epsilon {\bf E}) = \rho\; \mbox{in}~\Omega\times (0,T), \\
			{\bf E}({\bf x},0)={\bf u}({\bf x}),\,\,
			{\bf E}_t({\bf x},0)={\bf v}({\bf x})\;\;\mbox{on}\; \Omega,\\
			\boldsymbol{\eta}\times{\bf E}=0,\;P=0\;\mbox{on} ~\partial \Omega\times (0,T).
		\end{array}
		\right.
	\end{eqnarray}
	
	There is a considerable literature based on the numerical approximation for the first-order time-dependent Maxwell equations (see  \cite{zhao2004analysis,makridakis1995time,monk1991mixed,li2015priori,da2022virtual,cockburn2004locally} and the references therein). However, for the second-order Maxwell problem, only a few articles deal with its numerical approximations.
	In \cite{monk1992analysis},	Monk proposed a classical FEM for the second-order time dependent Maxwell equations and deduced semi-discrete error estimates in the energy norm while using curl-conforming N{\'e}d{\'e}lec edge elements of type 1 and type 2 and ${\bf H}^{1}(\Omega)$-conforming finite elements. However, for obtaining optimal ${\bf L}^{2}$-norm estimates, curl-conforming N{\'e}d{\'e}lec edge elements were necessary. Ciarlet et al. (cf. \cite{ciarlet1999fully}) presented the complete discrete optimal convergence results for the second-order Maxwell equations using a backward Euler scheme in time and a curl-conforming FEM with the linear  N{\'e}d{\'e}lec edge elements of type 2 for spatial discretizations. Radu and Egger (cf. \cite{egger2021second}) proposed a second-order curl-conforming explicit FEM with mass lumping for the Maxwell equations, where they devised a second-order inexact Galerkin approximation of Maxwell's
	equations with a block diagonal mass matrix achieving the same accuracy and flexibility
	of standard finite element approximations. Further, the optimal convergence of the errors was shown in the ${\bf H}(\mathrm{curl})$-norms for the semi and complete discrete explicit schemes. Recently, in \cite{radu2025yee}, the authors presented a Yee-like FEM for the Maxwell equations and demonstrated optimal convergence analysis for the error in an energy norm. The interior penalty discontinuous Galerkin (IPDG) framework for the semi-discrete formulation of the second-order time-dependent Maxwell system was first analyzed in detail by Grote et al. in \cite{grote2007interior}, where they established optimal convergence rates in the energy norm subject to appropriate regularity of the exact solution. Their subsequent work (see \cite{grote2008interior}) extended this analysis to obtain optimal error bounds in the ${\bf L}^{2}$ norm. Further, Mitkova et al. (cf. \cite{grote2010explicit}) investigated explicit local time-stepping techniques combined with spatial IPDG discretizations for Maxwell's equations in both conducting and non-conducting media. More recently, Shields et al. \cite{shields2017weak} proposed an implicit weak Galerkin finite element approach for Maxwell's equations and proved optimal convergence properties with respect to a discrete energy norm. In \cite{duttamohapatra2025}, we had extended an explicit skeletal dG scheme for the second-order Maxwell equations and deduced optimal energy norm estimates for the error.
	The above-stated methods for second-order time-dependent Maxwell's equations implicitly assume that the electric field satisfies the Gauss law (see the third equation of \eqref{model1}).
	
	As a saddle point hyperbolic system, performing error analysis for the Maxwell equations \eqref{model3} can be quite complex, which is why there are relatively few publications addressing the numerical analysis of model \eqref{model3}.
	Our literature survey indicates that the first study on the time-dependent saddle point Maxwell problem was conducted by Assous et al. \cite{assous1993finite}. In their work, they utilized a modified Taylor-Hood element to approximate the electric field and potential functions. Subsequently, the authors in \cite{chen2000finite} proposed a conforming finite element method for the spatial discretization of the hyperbolic Maxwell saddle point interface problem. They achieved optimal convergence results for the electric field in the ${\bf H}(\mathrm{curl})$-norm, employing a linear curl-conforming element for the electric field approximation and continuous piecewise linear elements for the potential variable, while utilizing a backward Euler scheme for the temporal discretization. However, they did not provide error estimates for the potential function.
	More recently, Qi et al. \cite{qi2025decoupled} presented an optimal convergence analysis of the Maxwell system \eqref{model3}. They derived estimates in an “energy” semi-norm for the error in the electric field and the error of the potential function in the \(L^{2}\)-norm, using weak Galerkin finite element methods (WG-FEMs) for spatial discretization.
	
	The traditional primal dG schemes generally incorporate a non-physical penalty parameter to maintain well-posedness. However, some recent families of FEMs, such as hybridized high-order (HHO) methods (cf. \cite{di2014arbitrary}) and weak Galerkin methods (cf. \cite{wang2013weak}), do not require such parameters. The primary objective of this article is to develop suitable numerical solutions for the second-order time-dependent saddle point Maxwell system \eqref{model3} using parameter-free dG methods for spatial discretization.
	
	The key highlights of this work are as follows:
	
	\begin{itemize}
		\item We present a spatial discretization method that utilizes a parameter-free discontinuous Galerkin numerical scheme for solving the time-dependent saddle point Maxwell equations.
		
		\item  We conduct a stability analysis of the associated continuous-in-time dG scheme and establish optimal convergence of the errors in both the electric field and potential functions under suitable norms by using an appropriately defined Ritz (elliptic) projection. 
		
		\item Under lower regularity assumptions ${\bf E}(t)\in {\bf H}^{1+l}(\Omega)$ $(l>0)$ for the electric field, we have established $\mathcal{O}(h^{1+l})$ convergence results in the ${\bf L}^{2}$-norm for the electric field error (see Theorem \ref{optL2low}).

		\item  Additionally, we propose time-integration schemes based on backward Euler and Crank-Nicolson type discretizations for the discrete-in-time model. We extensively discuss various numerical experiments, which cover both two-dimensional and three-dimensional test cases.
	\end{itemize}

	To the best of our knowledge, this article is the first to provide error estimates in the ${\bf L}^{2}$-norm for the second-order time-dependent Maxwell saddle point system.
	
	The structure of this article is as follows. In Section \ref{sec2}, the Maxwell equations are introduced, and their corresponding variational problem is derived. Some basic notations are also discussed. Section \ref{sec3} introduces some fundamentals required for presenting the parameter-free dG method. The semi-discretization of the model problem is done in Section \ref{sec4} using the parameter-free dG technique, followed by detailed derivations of stability and optimal error estimates of the corresponding problem. In Section \ref{sec6} we present some numerical computations in both 2D/3D domains to verify the theoretical results. Finally Section \ref{sec6} concludes the article.

	\section{\normalsize Basic notations and weak formulation}	\label{sec2}
	
	The standard notations for Sobolev spaces and norms have been followed in this paper. Consider a domain $N\subseteq\Omega\subset \mathbb{R}^d\; (d=2, 3)$. The Hilbert Sobolev space of scalar-valued functions is denoted as 
	$H^{l}(N)$, where $l\ge 0$ is an integer, equipped with the inner product $(\cdot, \cdot)_{l, N}$, 
	semi-norm $|\cdot |_{l, N}$ and norm $\norm{\cdot}_{l,N}$. For $N=\Omega$, we drop the subscript from the norm and inner product. Again, for vector-valued Sobolev Spaces, we represent them as ${\bf H}^l(N):=\left[H^{l}(N)\right]^d$.
	The notion of curl for a vector function ${\bf V}=(V_{1},V_{2})^{t}$ and a scalar function $V$ in two dimensions are given in the following way
	\begin{eqnarray*}
		\nabla\times{\bf V}=\frac{\partial V_{2}}{\partial x}-\frac{\partial V_{1}}{\partial y},\;
		\nabla\times V= \left(\frac{\partial V}{\partial y},-\frac{\partial V}{\partial x}\right)^{t}.
	\end{eqnarray*}
	Motivated from the definitions of curl for scalar and vector functions in two dimensional setup, we now introduce the following space
	$${\bf H}^l(\mbox{{curl}};N):=\{{\bf v}\in {\bf H}^l(N):\; \nabla\times{\bf v}\in [{H}^{l}(N)]^{2d-3}\}.$$
	Denote by ${\bf H}^0(\mbox{{curl}};N)={\bf H}(\mbox{{curl}};N)$, which is equipped with the graph norm
	$$\norm{\textbf{v}}_{\text{curl},N}^{2}:=\norm{\textbf{v}}_{N}^2+\norm{\nabla\times\textbf{v}}_{N}^2.$$
	Further, a subspace of ${\bf H}(\mathrm{curl};N)$ with zero tangential trace is introduced and is given as
	$${\bf H}_{0}(\mbox{{curl}};N):=\{{\bf v}\in {\bf H}(\mathrm{curl};N):\; \boldsymbol{\eta}\times{\bf v} = 0 \mbox{ on } \partial N \}.$$ 
	
	For an integer $p$ with $1\le p \le \infty$, we also define the standard Bochner spaces ${\bf L}^p(0,T;{\cal B})$, where
	$\cal B$ is a real Banach space with norm $\|.\|_{\cal B}$, consisting of all measurable functions $\Phi:J\to {\cal B}$ for which
	\begin{eqnarray*}
		\|\Phi\|_{{\bf L}^p(0,T; {\cal B})}&=& \Big(\int_0^T\|\Phi(t)\|_{\cal B}^pdt\Big)^{\frac{1}{p}}< \infty\;\;\;\mbox{for}\;\;1\le p < \infty,\\
		\|\Phi\|_{{\bf L}^{\infty}(0,T; {\cal B})}&=& \mbox{ess}\sup_{t\in [0,T]}\|\Phi(t)\|_{\cal B}< \infty\;\;\;\mbox{for}\;\; p=\infty.
	\end{eqnarray*}
	We denote by ${\bf H}^{l}(0,T;{\cal B})$, $l$ is an integer such that $1\le l <\infty$, the space of all measurable functions $\Phi:(0,T)\to {\cal B}$ for which
	\begin{eqnarray*}
		\|\Phi\|_{{\bf H}^{l}(0,T;{\cal B})}=\Bigg(\sum_{j=0}^l\int_0^T\bigg{\|}
		\frac{\partial^j \Phi (t)}{\partial t^j}\bigg{\|}^2_{\cal
			B}dt\Bigg)^{\frac{1}{2}}< \infty.
	\end{eqnarray*}
	When no risk of confusion exists, we shall write ${\bf L}^{2}({\cal B})$ for ${\bf L}^{2}(0,T;{\cal B})$, ${\bf L}^{\infty}({\cal B})$ for ${\bf L}^{\infty}(0,T;{\cal B})$ and ${\bf H}^{l}({\cal B})$ for ${\bf H}^{l}(0,T;{\cal B})$.
	
	We have used the notation $C$ throughout this article to denote a positive constant whose value changes according to the context and is independent of space and time mesh sizes but can depend on the final time $T$.
	
	The weak formulation of \eqref{model3} seeks $({\bf E}(t),P(t))\in {\bf H}_0(\mathrm{curl};\Omega)\times H^{1}_{0}(\Omega)$, $t\in (0,T)$, such that ${\bf E}(\cdot,0)={\bf u}$, ${\bf E}_t(\cdot,0)={\bf v}$ and satisfies the following equations:
	\begin{eqnarray*}
		\left\{
		\begin{array}{l}
			(\epsilon{\bf E}_{tt},\textbf{v}) + (\alpha\nabla\times{\bf E},\nabla\times\textbf{v}) - (\epsilon{\bf v},\nabla P) = ({\bf F}, \textbf{v}) \, \forall \textbf{v} \in {\bf H}_0(\mathrm{curl};\Omega),\\
			(\epsilon{\bf E},\nabla q)=-(\rho,q)\,  \forall q\in H^{1}_{0}(\Omega).
		\end{array}
		\right.
	\end{eqnarray*}
	
	\section{\normalsize The discontinuous Galerkin discretizations}\label{sec3}
	
	The spatial discretizations of the domain $\Omega$ are motivated from the work of Wang et al. \cite{wang2014weak}. Let $\mathcal{K}_{h}$ be a quasi-uniform polygonal/polyhedral partition of the domain $\Omega$ in 2D/3D. Assume that the set of all edges/faces are denoted by $\mathcal{F}_h$ and further  $\mathcal{F}_h^{0}$ represents the set of all interior edges/faces. For any element $K$ of the mesh partition $\mathcal{K}_{h}$, let $h_{K}$ represent its diameter and thus the mesh size is given as $h:=\max_{K\in\mathcal{K}_h}h_{K}$.
	
	We now introduce the notions of tangential and normal jumps alongside the averages of scalar and vector valued functions. Suppose $e$ is an intra-element edge/face shared by two arbitrary elements (say) $K_1,\,K_2\in \mathcal{K}_h$ if $\textbf{n}_1$ and $\textbf{n}_2$ represent the unit outward normal vectors on $e$ for $K_1$ and $K_2$, respectively, then the normal and tangential jumps are defined, respectively, as
	\begin{eqnarray*}
		\jump{{\bf v}}_{N,e}=\textbf{n}_1\cdot{\bf v}_1+\textbf{n}_2\cdot{\bf v}_2,\; \jump{{\bf v}}_{T,e}=\textbf{n}_1\times{\bf v}_1+\textbf{n}_2\times{\bf v}_2.
	\end{eqnarray*}
	Further, the average of ${\bf v}$ is defined as
	\begin{eqnarray*}
		\avg{{\bf v}}=\frac{{\bf v}_{1}+{\bf v}_{2}}{2}.
	\end{eqnarray*}
	The average and jump for a scalar-valued function ${v}$ are defined as:
	\begin{eqnarray*}
		\avg{v}=\frac{v_{1}+v_{2}}{2},\; \jump{v}_e=\textbf{n}_1{v}_1+\textbf{n}_2{v}_2.
	\end{eqnarray*}
	Here, ${\bf v}_{i}={\bf v}|_{K_{i}}$, where $i=1,2$, and the same holds for $v_{i}$.
	If $e$ is a boundary edge, i.e. $e\subset\partial\Omega$, the tangential and normal jumps of ${\bf v}$ along edge $e$ are defined as 
	\begin{eqnarray*}
		\jump{\textbf{v}}_{N,e}=\boldsymbol{\eta}\cdot\textbf{v}|_{K},\, \jump{\textbf{v}}_{T,e}=\boldsymbol{\eta}\times\textbf{v}|_{K}.
	\end{eqnarray*}
	Again, the average of ${\bf v}$ on any $e\subset\partial\Omega$ is defined as
	\begin{eqnarray*}
		\avg{{\bf v}}={\bf v}|_{K}.
	\end{eqnarray*}
	Similarly, the average and jump for a scalar-valued function ${v}$ on any $e\subset\partial\Omega$ are defined as:
	\begin{eqnarray*}
		\avg{v}=v|_{K},\; \jump{v}_e=\boldsymbol{\eta}{v}|_K.
	\end{eqnarray*}
	The notations $\textbf{v}|_K$ and $v|_K$ stands for values of $\textbf{v}$ and $v$ on $K$, respectively.
	
	The dG finite element solution space for approximating the electric field ${\bf E}$ is defined as
	\begin{equation*}
		{\bf V}_h:=\left\{{\bf v}_{h}\in {\bf L}^{2}(\Omega): {\bf v}|_{K}\in {\bf P}_{k}(K),\,\text{for all}\, K\in\mathcal{K}_h\right\}\nonumber.
	\end{equation*}
	Again, the dG finite element solution space for approximating the potential $P$ is given by
	\begin{equation*}
		Q_{h}=\left\{q_{h}\in L^{2}(\Omega):q_{h}|_{K}\in P_{k-1}(K),\,\text{for all}\, K\in\mathcal{K}_h\right\}.
	\end{equation*}
	Here, ${\bf P}_{k}(K):=[P_{k}(K)]^{d}$ and $P_{k}(K)$ is the set of polynomials defined on the element $K$ having degree less or equal to $k$, for $k\ge 1$.
	
	Further, we introduce the following subspaces
	\begin{eqnarray*}
		{\bf V}_h^{0}&=&\{{\bf v}_{h}\in{\bf V}_{h}: \boldsymbol{\eta}\times{\bf v}_{h}|_e=0, \text{ for any } e\subset\partial\Omega\},\\
		Q_{h}^{0}&=&\left\{q_{h}\in Q_{h}:q_{h}|_{e}=0 \text{ for any } e\subset\partial\Omega\right\}.
	\end{eqnarray*}
	\begin{definition}[cf. Definition 2.3., \cite{tang2023modified}]
		The discrete modified weak curl of any ${\bf v}_{h}\in {\bf V}_{h}$ is defined as a unique computable quantity $\nabla_{w}\times{\bf v}_{h}\in[P_{k-1}(K)]^{2d-3}$ satisfying the following equation,
		\begin{equation}\label{modweakcurl}
			(\nabla_{w}\times {\bf v}_h,w)_{K}:=({\bf v}_{h},\nabla\times w)_K +\langle{\bf n}\times \avg{{\bf v}_h},w\rangle_{\partial K},\;\forall w\in [P_{k-1}(K)]^{2d-3}.
		\end{equation}
	\end{definition}
	\begin{definition}[cf. Definition 1.1. \cite{wang2014modified}]
		The discrete modified weak gradient of any $q_{h}\in Q_{h}$ is defined as a unique computable quantity $\nabla_{w}q_{h}\in {\bf P}_{k}(K)$ satisfying the following equation,
		\begin{equation}\label{modweakgrad}
			(\nabla_{w} q_{h},{\bf r})_{K}:=-(q_{h},\nabla\cdot {\bf r})_K +\langle \avg{q_{h}},{\bf n}\cdot {\bf r}\rangle_{\partial K}\;\forall {\bf r}\in {\bf P}_{k}(K).
		\end{equation}
	\end{definition}
	In the definitions \eqref{modweakcurl} and \eqref{modweakgrad}, the notations $(\cdot,\cdot)_{K}$ and $\langle\cdot,\cdot\rangle_{\partial K}$ denotes the standard $L^{2}$-inner products on $K$ and $\partial K$, respectively.
	
	Now, we introduce some standard ${\bf L}^{2}$-projection operators
	\begin{itemize}
		\item ${\bf T}_h: [L^{2}(\Omega)]^{d}\rightarrow{\bf V}_{h}^{0}$,
		\item $\mathcal{T}_h:[L^{2}(K)]^{2d-3}\rightarrow[P_{k-1}(K)]^{2d-3}$,
		\item $L_h: [L^{2}(\Omega)]^{d}\rightarrow Q_h^{0}$.
	\end{itemize}
	We recall some useful inequalities from \cite{wang2014weak},
	\begin{itemize}
		\item If $u\in H^1(K)$, then we have the following trace inequality
		\begin{equation}\label{trace}
			\norm{u}_e^2\leq C\left(h_K^{-1}\norm{u}_K^2+h_K\norm{\nabla u}_K^2\right),
		\end{equation}
		where $K\in\mathcal{K}_{h}$ and $e$ represents the edge/face of an element $K$.
		\item Further, for each piece-wise polynomial $\phi$ of degree $n$ on $\mathcal{K}_h$, we have the following inverse inequality
		\begin{equation}\label{inverse}
			\norm{\nabla\phi}_{K}\le C(n)h^{-1}\norm{\phi}_{K}.
		\end{equation}
	\end{itemize}
	Before proceeding to describe the parameter-free dG method, we introduce some bilinear forms $A({\cdot,\cdot}):{\bf V}_{h}\times{\bf V}_{h}\to \mathbb{R}$, $\mathcal{S}({\cdot,\cdot}):{\bf V}_{h}\times{\bf V}_{h}\to \mathbb{R}$ and $B(\cdot,\cdot):{\bf V}_{h}\times Q_{h} \to\mathbb{R}$ given by
	\begin{eqnarray*}
		A({\bf v}_h,{\bf w}_h)&:=&\sum_{K\in\mathcal{K}_h}(\alpha\nabla_{w}\times{\bf v}_h,\nabla_{w}\times{\bf w}_{h})_{K}+\mathcal{S}({\bf v}_h,{\bf w}_{h}),\\
		\mathcal{S}({\bf v}_h,{\bf w}_{h})&:=&\sum_{K\in\mathcal{K}_{h}}h^{-1}_{K}(\langle\jump{{\bf v}_h}_{T,e},\jump{{\bf w}_h}_{T,e}\rangle_{\partial K}+\langle\jump{{\bf v}_h}_{N,e},\jump{{\bf w}_h}_{N,e}\rangle_{\partial K\setminus\partial\Omega}),~~~~~~~~\\
		B({\bf v}_h,q_{h})&:=&\sum_{K\in\mathcal{K}_h}(\epsilon{\bf v}_h,\nabla_{w}q_{h})_{K}.
	\end{eqnarray*}
	We define some semi-norms  $\tnorm{\cdot}$ and $\tnorm{\cdot}_{0}$ on the function spaces and ${\bf V}_{h}$ and $Q_{h}$, respectively, given as
	\begin{eqnarray*}
		\tnorm{{\bf v}_{h}}^{2}&:=&A({\bf v}_{h},{\bf v}_{h}),\\
		\tnorm{q_{h}}_{0}^{2}&:=&\sum_{K\in\mathcal{K}_{h}}\norm{\nabla_{w} q_{h}}_{K}^{2}+\sum_{e\in\mathcal{F}_{h}}h^{-1}\norm{\jump{q_{h}}_{e}}_{e}^{2}.
	\end{eqnarray*}
	In fact it is easy to see that the semi-norm $\tnorm{\cdot}_{0}$ is indeed a norm in the space $Q_{h}^{0}$.
	Again, we introduce a norm on the space ${\bf V}_{h}^{0}$, given by
	\begin{eqnarray*}
		\tnorm{{\bf v}_{h}}_{1}^{2}&:=&\norm{{\bf v}_{h}}^{2}+\tnorm{{\bf v}_{h}}^{2}
		.
	\end{eqnarray*}
	\begin{lemma}[cf. Lemma 4.3., \cite{duttamohapatra2025}]\label{energyinverse}
		The following inverse inequality in discrete norm $\tnorm{\cdot}_{0}$ holds true for any 
		$q_{h}\in Q_{h}^{0}$
		\begin{equation*}
			\tnorm{q_{h}}_{0}\le Ch^{-1}\norm{q_{h}}.
		\end{equation*}
	\end{lemma}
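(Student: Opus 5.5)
The inequality is local, so I will bound each of the two pieces of $\tnorm{q_h}_0^2$ separately, element by element and face by face. Each piece will be controlled by $h^{-2}\norm{q_h}_K^2$, using only the trace inequality \eqref{trace}, the inverse inequality \eqref{inverse}, and quasi-uniformity of $\mathcal{K}_h$. Quasi-uniformity lets me replace every $h_K$ by $h$ up to constants.

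\emph{Step 1: weak gradient term.} Fix $K$ and write ${\bf r}=\nabla_w q_h|_K\in{\bf P}_k(K)$ in \eqref{modweakgrad}. The first term on the right is bounded using \eqref{inverse} on ${\bf r}$:
$$|(q_h,\nabla\cdot{\bf r})_K|\le Ch^{-1}\norm{q_h}_K\norm{{\bf r}}_K.$$
For the boundary term, $\avg{q_h}$ on a face $e\subset\partial K$ is the average of the traces from $K$ and from the neighbour $K'$ sharing $e$. On boundary faces it is just the trace from $K$; in fact $q_h|_e=0$ there since $q_h\in Q_h^0$. I apply \eqref{trace} together with \eqref{inverse} to each polynomial trace, which gives $\norm{q_h|_{K'}}_e\le Ch^{-1/2}\norm{q_h}_{K'}$, and similarly $\norm{{\bf r}}_e\le Ch^{-1/2}\norm{{\bf r}}_K$. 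Hence
$$|\langle\avg{q_h},{\bf n}\cdot{\bf r}\rangle_{\partial K}|\le Ch^{-1}\Big(\norm{q_h}_K+\sum_{K'\cap K=e}\norm{q_h}_{K'}\Big)\norm{{\bf r}}_K.$$
Dividing by $\norm{{\bf r}}_K$ then gives $\norm{\nabla_w q_h}_K\le Ch^{-1}\norm{q_h}_{\omega_K}$, where $\omega_K$ is the patch of $K$ together with its face neighbours.

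\emph{Step 2: jump term.} For each face $e$, I bound $\norm{\jump{q_h}_e}_e$ by the sum of the one-sided traces. By the same discrete trace estimate this is at most $Ch^{-1/2}(\norm{q_h}_{K_1}+\norm{q_h}_{K_2})$. Multiplying the square by $h^{-1}$ gives $Ch^{-2}(\norm{q_h}_{K_1}^2+\norm{q_h}_{K_2}^2)$.

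\emph{Step 3: summation.} I sum over all $K$ and all $e$. Shape regularity gives a uniform bound on the number of faces per element, so each element appears in only a bounded number of patches $\omega_K$ and face pairs. Therefore $\tnorm{q_h}_0^2\le Ch^{-2}\norm{q_h}^2$, and taking square roots gives the claim.

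The main obstacle is Step 1, since the weak gradient is defined only implicitly through \eqref{modweakgrad}. Its boundary term couples $K$ to its neighbours through $\avg{q_h}$, so the bound must be taken over the patch $\omega_K$ rather than over $K$ alone. This coupling is harmless because of the finite-overlap argument in Step 3. The discrete trace inequality itself needs care on general polytopal elements: I will take \eqref{trace} and \eqref{inverse} to hold elementwise with constants that are uniform under the shape-regularity assumptions of \cite{wang2014weak}.
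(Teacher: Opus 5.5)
Your proof is correct. You test \eqref{modweakgrad} with ${\bf r}=\nabla_w q_h|_K$ and bound the volume term by \eqref{inverse}. You bound the face and jump terms by the discrete trace estimate $\norm{p}_e\le Ch^{-1/2}\norm{p}_K$, which follows from \eqref{trace}, \eqref{inverse} and quasi-uniformity. You then sum using a finite-overlap count; this is the standard route and leaves no gap. The paper itself gives no proof of this lemma and only imports it from \cite{duttamohapatra2025}, so your argument supplies the reasoning the paper omits.
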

	\begin{lemma}[cf. Lemma 4.5., \cite{duttamohapatra2025}]\label{infsup}
		For any $q_{h}\in Q_h^{0}$, there exists a ${\bf w}_{h}\in {\bf V}_{h}^{0}$ such that following holds
		\begin{eqnarray*}
			B({\bf w}_{h},q_{h})&=&\norm{q_{h}}^{2},\\
			\tnorm{{\bf w}_{h}}_{1}&\le&C\norm{q_{h}}.
		\end{eqnarray*}
	\end{lemma}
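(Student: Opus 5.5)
Since $q_h\in Q_h^0$ is piecewise polynomial, the standard approach is to build ${\bf w}_h$ from a continuous inf-sup argument and then transfer it to the discrete setting through a projection that commutes, in a weak sense, with the discrete weak gradient. I would work through the adjoint of $B$. By \eqref{modweakgrad}, elementwise integration by parts, and $q_h|_e=0$ on boundary faces, we get for any ${\bf v}_h\in{\bf V}_h^0$
\begin{equation*}
B({\bf v}_h,q_h)=\sum_{K}(\epsilon{\bf v}_h,\nabla q_h)_K-\sum_{e\in\mathcal{F}_h^0}\langle \epsilon\avg{{\bf v}_h}\cdot\jump{q_h}_e,1\rangle_e ,
\end{equation*}
up to the sign conventions on jumps. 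So $B$ pairs ${\bf v}_h$ with the "broken gradient plus lifted jumps" of $q_h$.

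\textbf{Step 1 (continuous lifting).} Solve the Dirichlet problem $-\Delta\phi = q_h$ in $\Omega$ with $\phi=0$ on $\partial\Omega$. Because $\Omega$ is convex, $\phi\in H^2(\Omega)$ and $\norm{\phi}_2\le C\norm{q_h}$. Set ${\bf w}=-\epsilon^{-1}\nabla\phi\in{\bf H}^1(\Omega)$. Then $\nabla\times{\bf w}=0$, ${\bf w}$ has continuous traces, $\boldsymbol{\eta}\times{\bf w}=0$ on $\partial\Omega$ (since $\phi$ vanishes there), and $\norm{{\bf w}}_1\le C\norm{q_h}$.

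\textbf{Step 2 (discrete candidate).} Take ${\bf w}_h$ to be a projection of ${\bf w}$ into ${\bf V}_h^0$: either ${\bf T}_h{\bf w}$, or an elementwise projection preserving the face moments $\langle{\bf n}\cdot{\bf w}_h,\mu\rangle_e$ for $\mu\in P_{k-1}(e)$ together with the interior moments against ${\bf P}_{k-2}$ (a BDM-type interpolant made boundary-compatible). For this choice we have $\nabla\cdot{\bf w}_h\in P_{k-1}$, and the weak-gradient definition with ${\bf r}={\bf w}_h$ gives
\begin{equation*}
B({\bf w}_h,q_h)=\epsilon\sum_K\Big[-(q_h,\nabla\cdot{\bf w}_h)_K+\langle\avg{q_h},{\bf n}\cdot{\bf w}_h\rangle_{\partial K}\Big].
\end{equation*}
The commuting property lets us replace ${\bf w}_h$ by ${\bf w}$ in both terms, because $q_h\in P_{k-1}(K)$ and $\avg{q_h}|_e\in P_{k-1}(e)$. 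Integrating back and using the continuity of ${\bf n}\cdot{\bf w}$ across faces, the face terms telescope and give $(q_h,-\nabla\cdot(\epsilon{\bf w}))=(q_h,\Delta\phi)\cdot(-1)$. With the signs fixed correctly this yields $\norm{q_h}^2$; if necessary, rescale ${\bf w}$ by $-1$.

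\textbf{Step 3 (stability bound).} We need $\norm{{\bf w}_h}\le C\norm{{\bf w}}_1$, which is standard. For the energy part, ${\bf w}$ is continuous, so the jumps of ${\bf w}_h$ equal the jumps of ${\bf w}_h-{\bf w}$. The trace inequality \eqref{trace} and approximation properties then give $h^{-1}\norm{\jump{{\bf w}_h}}_e^2\le C|{\bf w}|_{1}^2$. For the weak curl, test \eqref{modweakcurl} with $w=\nabla_w\times{\bf w}_h$ and compare with $\nabla\times{\bf w}=0$: the difference is controlled by $\norm{\nabla({\bf w}_h-{\bf w})}_K$ plus face terms in $h^{-1/2}\norm{{\bf w}_h-{\bf w}}_{\partial K}$, all of which are $\le C|{\bf w}|_1$. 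Combining these bounds gives $\tnorm{{\bf w}_h}_1\le C\norm{q_h}$.

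\textbf{Main obstacle.} The hardest part is Step 2: we need a projection into ${\bf V}_h^0$ that simultaneously enforces zero tangential trace on $\partial\Omega$ and exact normal-moment preservation on faces. At boundary faces the projection must keep the normal moments while killing the tangential component; this is compatible because ${\bf w}$ itself is normal there. For full polynomials ${\bf P}_k$ I expect a BDM-type degree-of-freedom set to do this. On general polytopal meshes, I would instead handle the mismatch $B({\bf w}_h-{\bf w},q_h)$ as a perturbation. The broken gradient contribution is controlled through $\norm{{\bf w}_h-{\bf w}}$, and the face contribution through $h^{-1/2}\norm{\jump{q_h}}_e$ via Lemma \ref{energyinverse}. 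I would then apply a standard Fortin-type fix that adds a small local correction to recover the equality exactly.
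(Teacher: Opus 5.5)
The paper does not prove this lemma; it imports it from the authors' earlier work, so your argument has to stand on its own. Steps 1 and 3 are fine, given an operator with standard approximation properties. The computation in Step 2 is also correct \emph{if} the operator you describe exists.

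To see what is needed, write $B({\bf v}_h,q_h)=\epsilon\sum_K[(\nabla q_h,{\bf v}_h)_K-\langle q_h-\avg{q_h},{\bf n}\cdot{\bf v}_h\rangle_{\partial K}]$ and note that $q_h-\avg{q_h}=0$ on boundary faces. You then need ${\bf w}_h\in{\bf V}_h^0$ whose moments against $\nabla P_{k-1}(K)$, and whose normal moments against $P_{k-1}(e)$ on \emph{interior} faces, agree with those of ${\bf w}$. The existence of such a ${\bf w}_h$ is the whole content of the lemma, and you only assert it.

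The RT/BDM interpolant controls only normal components, so it does not lie in ${\bf V}_h^0$. An elementwise correction can be impossible. Take $d=3$, $k=1$, and a tetrahedron $K$ with three faces on $\partial\Omega$, such as the corner tetrahedron of the five-tetrahedra subdivision of a cube. A ${\bf P}_1$ field whose tangential trace vanishes on three faces through a common vertex vanishes at all four vertices, so ${\bf w}_h|_K=0$. Its flux through the fourth (interior) face then cannot match that of ${\bf w}$, while $q_h$ on the neighbouring interior tetrahedron is arbitrary.

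Any repair must therefore be non-local, for example compensating through the neighbour, since $B$ only sees ${\bf n}\cdot\avg{{\bf w}_h}$ on interior faces. You would then have to prove stability of the resulting $h^{-1/2}$ normal jumps in $\tnorm{\cdot}$. On general polytopal meshes, which is the setting of the lemma, there is not even a canonical RT/BDM interpolant.

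Your fallback does not close the gap. With ${\bf w}_h={\bf T}_h{\bf w}$ the volume mismatch does vanish, because the elementwise gradient of $q_h$ has zero tangential trace on boundary faces and hence lies in ${\bf V}_h^0$. The face mismatch $\sum_K\langle q_h-\avg{q_h},{\bf n}\cdot({\bf w}-{\bf T}_h{\bf w})\rangle_{\partial K}$, however, is only bounded by $\big(\sum_e h^{-1}\norm{\jump{q_h}_e}_e^2\big)^{1/2}\,Ch|{\bf w}|_1\le C\norm{q_h}^2$ via Lemma~\ref{energyinverse}. This is the same order as the main term, with an unquantified constant, and for oscillatory (checkerboard-like) $q_h$ both factors really are of that size.

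So you obtain no lower bound $B({\bf w}_h,q_h)\ge c\norm{q_h}^2$ at all. Exact equality is not the difficulty, since any such lower bound could be rescaled. The ``standard Fortin-type fix'' is precisely the missing ingredient: a uniformly $\tnorm{\cdot}_1$-stable operator into ${\bf V}_h^0$ that preserves the moments above. You would have to construct it yourself, including local solvability on boundary elements (also those with several boundary faces) and stability of any non-local corrections.
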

	Consider the arbitrary pair $(\boldsymbol{\phi}_h,\psi_{h})\in {\bf V}_{h}^{0}\times Q_{h}^{0}$, then the Ritz projection pair $(\mathbb{R}_h,\mathbb{E}_{h})$ is given in the following way: The map  $\mathbb{R}_h: {\bf H}^{1+k}(\Omega)\cap {\bf H}_{0}(\mathrm{curl};\Omega) \to {\bf V}_h^0$ and $\mathbb{E}_{h}:H^{k}(\Omega)\cap H^{1}_{0}(\Omega)\to Q_{h}^{0}$ is defined as the unique solution of the following discrete variational problem 
	\begin{eqnarray}\label{Ritz}
		\left\{
		\begin{array}{ll}
			A(\mathbb{R}_h {\bf z}, \boldsymbol{\phi}_h) - B(\boldsymbol{\phi}_{h},\mathbb{E}_{h}q) = ({\bf f}_{{\bf z},q}, \boldsymbol{\phi}_h),\\
			B(\mathbb{R}_h {\bf z},\psi_{h})=-(g_{{\bf z},q},\psi_{h}).
		\end{array}
		\right.
	\end{eqnarray}
	where ${\bf z}\in {\bf H}^{1+k}(\Omega)\cap {\bf H}_{0}(\mathrm{curl};\Omega)$, $q\in H^{k}(\Omega)\cap H^{1}_{0}(\Omega)$, ${\bf f}_{{\bf z},q}=\nabla\times(\alpha\nabla\times {\bf z})-\epsilon\nabla q$ and $g_{{\bf z},q}=\nabla\cdot(\epsilon {\bf z})$ in $\Omega$. Hence, the pair $(\mathbb{R}_h {\bf v},\mathbb{E}_{h}q)$ can be realized as numerical approximation obtained using the parameter-free dG method (cf. \cite{duttamohapatra2025}) of the following elliptic problem: Find $({\bf z},q)\in ({\bf H}^{1+k}(\Omega)\cap {\bf H}_{0}(\mathrm{curl};\Omega))\times (H^{k}(\Omega)\cap H^{1}_{0}(\Omega))$ satisfying 
	\begin{eqnarray}\label{dual}
		\left\{
		\begin{array}{ll}
			\nabla\times (\alpha\nabla\times{\bf z})-\epsilon\nabla q={\bf f}_{{\bf z},q}\; \mbox{in}~\Omega, \\
			\nabla\cdot(\epsilon {\bf z}) = g_{{\bf z},q}\; \mbox{in}~\Omega. \\
		\end{array}
		\right.
	\end{eqnarray}
	\begin{lemma}[cf. Theorem 4.16, Theorem 4.18, Corollary 4.20, \cite{duttamohapatra2025}]\label{ritzestimate}
		Let the exact solution of the elliptic problem \eqref{dual} has the regularity $({\bf z},q)\in ({\bf H}^{1+k}(\Omega)\cap {\bf H}_{0}(\mathrm{curl};\Omega))\times (H^{k}(\Omega)\cap H^{1}_{0}(\Omega))$, then there exists a positive constant $C$ independent of mesh size $h$ satisfying:
		\begin{eqnarray}
			\left\{
			\begin{array}{ll}
				\tnorm{{\bf T}_{h}{\bf z}-\mathbb{R}_{h}{\bf z}}_{1}\le Ch^{k}\left(\norm{{\bf z}}_{1+k}+\norm{q}_{k}\right),\nonumber\\
				\norm{{\bf T}_{h}{\bf z}-\mathbb{R}_{h}{\bf z}}\le Ch^{\delta+k}\left(\norm{{\bf z}}_{1+k}+\norm{q}_{k}\right),\nonumber\\
				\norm{L_{h}q-\mathbb{E}_{h}q}\le Ch^{k}\left(\norm{{\bf z}}_{1+k}+\norm{q}_{k}\right).
			\end{array}		\right.
		\end{eqnarray}
		Here $0<\delta\le 1$ is the Sobolev regularity exponent of the solution of corresponding dual problem of \eqref{dual} (see \cite{duttamohapatra2025} for more details). 
	\end{lemma}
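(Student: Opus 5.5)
Proving Lemma \ref{ritzestimate} means redoing, for the Ritz pair defined by \eqref{Ritz}, the standard saddle-point analysis of the parameter-free dG method of \cite{duttamohapatra2025}. There are three steps: an energy estimate, an inf-sup argument for the potential, and an Aubin--Nitsche duality argument for the ${\bf L}^2$ bound.

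\emph{Step 1: consistency.} Insert the exact pair $({\bf z},q)$ of \eqref{dual} into the discrete forms. For ${\bf z}\in{\bf H}^{1+k}\cap{\bf H}_0(\mathrm{curl})$ the jumps $\jump{{\bf z}}_{T,e}$ and $\jump{{\bf z}}_{N,e}$ (interior) vanish. The averages in \eqref{modweakcurl} reproduce the traces, so integration by parts gives $\nabla_w\times{\bf z}=\mathcal{T}_h(\nabla\times{\bf z})$, and similarly $\nabla_w q$ is the local $L^2$ projection of $\nabla q$. This yields error equations with consistency remainders of the form $\sum_K\langle {\bf n}\times(\alpha\nabla\times{\bf z}-\mathcal{T}_h\alpha\nabla\times{\bf z}),\cdot\rangle_{\partial K}$ and the analogous terms for $q$. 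The remainders are estimated by the trace inequality \eqref{trace} and approximation properties of ${\bf T}_h$, $\mathcal{T}_h$, $L_h$, which gives $\mathcal{O}(h^k)(\norm{{\bf z}}_{1+k}+\norm{q}_k)$ times $\tnorm{\cdot}_1$ of the test function.

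\emph{Step 2: energy and potential bounds.} Write ${\bf e}_h={\bf T}_h{\bf z}-\mathbb{R}_h{\bf z}$ and $\xi_h=L_hq-\mathbb{E}_hq$. Test the error equations with $(\boldsymbol\phi_h,\psi_h)=({\bf e}_h,\xi_h)$; the $B$ terms cancel, leaving $\tnorm{{\bf e}_h}^2$ bounded by consistency terms plus $B({\bf e}_h,\xi_h)$-type interpolation residuals. The ${\bf L}^2$ part of $\tnorm{{\bf e}_h}_1$ is controlled by a discrete Friedrichs/Helmholtz inequality for ${\bf V}_h^0$, using the discrete divergence constraint (second line of \eqref{Ritz}). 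For $\xi_h$, apply Lemma \ref{infsup}: choose ${\bf w}_h$ with $B({\bf w}_h,\xi_h)=\norm{\xi_h}^2$ and $\tnorm{{\bf w}_h}_1\le C\norm{\xi_h}$. The first error equation then gives $\norm{\xi_h}^2\le C(\tnorm{{\bf e}_h}_1+h^k(\cdots))\norm{\xi_h}$. Combining these bounds gives the first and third estimates.

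\emph{Step 3: duality.} Let $(\boldsymbol\Phi,\Psi)$ solve the adjoint problem of \eqref{dual} with data ${\bf e}_h$ and zero divergence data; by assumption it has regularity $\norm{\boldsymbol\Phi}_{1+\delta}+\norm{\nabla\times\boldsymbol\Phi}_{\delta}+\norm{\Psi}_{1}\le C\norm{{\bf e}_h}$. Test its discrete consistency identity with ${\bf e}_h$, then subtract the Galerkin orthogonality from Step 1 with test functions $({\bf T}_h\boldsymbol\Phi,L_h\Psi)$. Each resulting term is a product of an $\mathcal{O}(h^k)$ primal error and an $\mathcal{O}(h^\delta)$ dual approximation error, which gives $\norm{{\bf e}_h}^2\le Ch^{k+\delta}(\cdots)\norm{{\bf e}_h}$.

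I expect Step 3 to be the main obstacle. The adjoint solution has only ${\bf H}^{1+\delta}$ regularity, so the trace estimates of dual consistency terms on faces must be handled with fractional-order trace and approximation results. The term coupling $\xi_h$ with $\boldsymbol\Phi-{\bf T}_h\boldsymbol\Phi$ also needs care: I would absorb it using the divergence-free property of $\boldsymbol\Phi$ together with Lemma \ref{energyinverse} where it is needed.
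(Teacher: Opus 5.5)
The paper does not prove this lemma; it imports it from Theorems 4.16, 4.18 and Corollary 4.20 of \cite{duttamohapatra2025}. Your three-step architecture (energy bound, inf-sup for the potential, duality for ${\bf L}^2$) is the natural one. However, Step 3 has a genuine hole, and Step 2 relies on a tool the paper does not provide and that you do not need.

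\textbf{The hole in Step 3.} It is not true that every term is a product of an $\mathcal{O}(h^k)$ primal quantity and an $\mathcal{O}(h^\delta)$ dual quantity. Your Step 1 identifies only the $\mathcal{M}_2$- and $\mathcal{M}_3$-type remainders. Working with ${\bf e}_h={\bf T}_h{\bf z}-\mathbb{R}_h{\bf z}$ also produces the lifting remainder $\mathcal{M}_1$ (Lemma \ref{Techres2lem}). When you test the primal error equation with ${\bf T}_h\boldsymbol\Phi$, this gives
\[
\mathcal{M}_1({\bf z},{\bf T}_h\boldsymbol\Phi)=\sum_{K\in\mathcal{K}_h}\langle{\bf n}\times(\avg{{\bf T}_h{\bf z}}-{\bf z}),\alpha\nabla_w\times{\bf T}_h\boldsymbol\Phi\rangle_{\partial K},
\]
whose second factor is $\mathcal{O}(1)$, not small. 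With $M:=\norm{{\bf z}}_{1+k}+\norm{q}_k$, Lemma \ref{bounds} gives only $Ch^kM\tnorm{{\bf T}_h\boldsymbol\Phi}$, so there is no gain. The missing observation is
\[
\sum_{K\in\mathcal{K}_h}\langle{\bf n}\times(\avg{{\bf T}_h{\bf z}}-{\bf z}),\alpha\nabla\times\boldsymbol\Phi\rangle_{\partial K}=0 .
\]
This holds for three reasons:
\begin{itemize}
\item $\avg{{\bf T}_h{\bf z}}-{\bf z}$ is single-valued on each face;
\item $\alpha\nabla\times\boldsymbol\Phi$ has a continuous tangential trace, because $\nabla\times(\alpha\nabla\times\boldsymbol\Phi)={\bf e}_h+\epsilon\nabla\Psi\in{\bf L}^2$;
\item ${\bf T}_h{\bf z}\in{\bf V}_h^0$ and ${\bf z}$ both have zero tangential trace on $\partial\Omega$.
\end{itemize}
Subtracting this zero replaces $\nabla_w\times{\bf T}_h\boldsymbol\Phi$ by $\nabla_w\times{\bf T}_h\boldsymbol\Phi-\nabla\times\boldsymbol\Phi$. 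That difference is $\mathcal{O}(h^\delta)$ in the $h^{1/2}$-weighted face norm, by Lemma \ref{Techres2lem}, a discrete trace inequality, and the fractional trace estimate you already planned.

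The other primal remainders tested with ${\bf T}_h\boldsymbol\Phi$ are also not covered by Lemma \ref{bounds} as stated, since $\tnorm{{\bf T}_h\boldsymbol\Phi}=\mathcal{O}(1)$. You must use that $\mathcal{M}_2$, $\mathcal{M}_3$ and $\mathcal{S}$ see only jumps of ${\bf T}_h\boldsymbol\Phi$, and these equal the jumps of ${\bf T}_h\boldsymbol\Phi-\boldsymbol\Phi$.

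By contrast, the terms you flagged as the main obstacle are routine. Lemma \ref{energyinverse} alone gives $|\mathcal{M}_4(\boldsymbol\Phi,\xi_h)|\le Ch^{1+\delta}\norm{\boldsymbol\Phi}_{1+\delta}\tnorm{\xi_h}_0\le Ch^{\delta}\norm{{\bf e}_h}\norm{\xi_h}$. Also $|\mathcal{M}_4({\bf z},L_h\Psi)|\le Ch^{1+k}\norm{{\bf z}}_{1+k}\norm{\Psi}_1$, because the jumps of $L_h\Psi$ are those of $L_h\Psi-\Psi$.

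\textbf{The issue in Step 2.} A discrete Friedrichs inequality ``using the discrete divergence constraint'' does not apply to ${\bf e}_h$, because $B({\bf e}_h,\psi_h)=\mathcal{M}_4({\bf z},\psi_h)\neq 0$. No such inequality is among the paper's tools in any case. Writing $\tnorm{{\bf e}_h}_1$ in your inf-sup bound for $\xi_h$ then makes the argument circular. Use the seminorm instead:
\begin{itemize}
\item With ${\bf w}_h$ from Lemma \ref{infsup}, Lemmas \ref{bounded} and \ref{bounds} give $\norm{\xi_h}\le C\tnorm{{\bf e}_h}+Ch^kM$.
\item Testing with $({\bf e}_h,\xi_h)$ and applying Lemma \ref{energyinverse} to $\mathcal{M}_4({\bf z},\xi_h)$ gives $\tnorm{{\bf e}_h}^2\le Ch^kM\tnorm{{\bf e}_h}+Ch^kM\norm{\xi_h}$.
\end{itemize}
Together these close for $\tnorm{{\bf e}_h}$ and $\norm{\xi_h}$ without any ${\bf L}^2$ control. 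Step 3 uses only these two quantities. The ${\bf L}^2$ part of the first estimate then follows from the second, stronger estimate.
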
 
	\section{\normalsize The continuous time parameter-free dG algorithm}\label{sec4}
	
	In this section, we propose the semi-discrete parameter-free dG method for the Maxwell system \eqref{model3} and derive the optimal error estimates under appropriate norms for the electric field and potential function.
	
	\noindent\rule{\textwidth}{0.4pt}
	\noindent{\bf The semi-discrete parameter-free dG algorithm 1:}
	For $t>0,$  find $({\bf E}_{h}(t),P_{h}(t))\in{\bf V}_{h}^{0}\times Q_{h}^{0}$ satisfying
	\begin{eqnarray}
		(\epsilon{\bf E}_{htt},{\bf v}_h)+A({\bf E}_{h},{\bf v}_h)-B({\bf v}_{h},P_h)&=&({\bf F},{\bf v}_h)\;\forall{\bf v}_h\in{\bf V}_h^0,\label{sdalg1}\\
		B({\bf E}_{h},q_h)&=&-(\rho,q_{h})\; \forall q_h\in Q_{h}^{0},\label{sdalg2}\\
		{\bf E}_h(\textbf{x},0)&=&\mathbb{R}_h{\bf u}(\textbf{x}),\,\textbf{x}\in\Omega,\label{sdalg3}\\
		{\bf E}_{ht}(\textbf{x},0)&=&\mathbb{R}_h{\bf v}(\textbf{x}),\,\textbf{x}\in\Omega.\label{sdalg4}
	\end{eqnarray}	
	\noindent\rule{\textwidth}{0.4pt}
	\begin{rem}\label{rem2}
		The use of the modified weak curl and modified weak gradient operators in the algorithm \eqref{sdalg1}-\eqref{sdalg4} is the key to remove parameter dependent stabilization terms in the bilinear forms appearing the continuous-in-time dG algorithm.
	\end{rem}
	\begin{rem}\label{initialchoice}
		We assume that $P_{h}(0)=\mathbb{E}_{h}P(0)$. Setting $t\to 0^{+}$ in \eqref{sdalg1}, for any ${\bf v}_{h}\in {\bf V}_{h}^{0}$ we achieve
		\begin{eqnarray}\label{comp1}
			A({\bf E}_{h}(0),{\bf v}_h)-B({\bf v}_{h},P_h(0))&=&({\bf F}(0)-\epsilon{\bf E}_{htt}(0),{\bf v}_h).
		\end{eqnarray}
		Again, selecting $({\bf z},q)=({\bf E}(0),P(0))$ in \eqref{Ritz}, for any ${\bf v}_{h}\in {\bf V}_{h}^{0}$ we obtain
		\begin{eqnarray}\label{comp2}
			A(\mathbb{R}_h {\bf E}(0), {\bf v}_h) - B({\bf v}_h,\mathbb{E}_{h}P(0)) &=& ({\bf f}_{{\bf E}(0),P(0)})\nonumber\\&=&({\bf F}(0)-\epsilon{\bf E}_{tt}(0),{\bf v}_h)\nonumber\\&=&({\bf F}(0)-\epsilon{\bf T}_{h}{\bf E}_{tt}(0),{\bf v}_h).
		\end{eqnarray} 
		Comparing \eqref{comp1}-\eqref{comp2}, we get
		\begin{equation}\label{initialsecond}
			{\bf E}_{htt}(0)={\bf T}_{h}{\bf E}_{tt}(0).
		\end{equation}
	\end{rem}
	\begin{rem}\label{Pt}
		By selecting $({\bf z},q)=({\bf E}(t),P(t))$ and  differentiating the second equation of the problem \eqref{dual} thrice with respect to $t$ and taking the limit $t\to 0^{+}$ on both sides of it, we get $\nabla\cdot(\epsilon{\bf E}_{ttt}(0))=\rho_{ttt}(0)$. Next differentiating first equation of \eqref{dual} with respect to $t$, then taking the limit $t\to 0^{+}$ on both sides of it and finally apply the divergence operator on both sides, we get $-\nabla\cdot(\epsilon P_{t}(0))=\nabla\cdot{\bf F}_{t}(0)-\rho_{ttt}(0)$. Then due to $a$ $priori$ estimate of the elliptic problem 
		\begin{eqnarray*}
			\left\{
			\begin{array}{ll}
				-\nabla\cdot(\epsilon P_{t}(0))=\nabla\cdot{\bf F}_{t}(0)-\rho_{ttt}(0)\,\,\mathrm{ in }\,\,\Omega,\\
				P_{t}(0)=0\,\,\mathrm{on}\,\,\partial\Omega,
			\end{array}
			\right.
		\end{eqnarray*}
		we have
		\begin{eqnarray}\label{apr1}
			\norm{P_{t}(0)}_{2}\le C\left(\norm{{\bf F}_{t}(0)}_{1}+\norm{\rho_{ttt}(0)}\right).
		\end{eqnarray}
		Arguing in the same manner, for the elliptic problem
		\begin{eqnarray}\label{elliptcproblem}
			\left\{
			\begin{array}{ll}
				-\nabla\cdot(\epsilon P(0))=\nabla\cdot{\bf F}(0)-\rho_{tt}(0)\,\,\mathrm{ in }\,\,\Omega,\\
				P(0)=0\,\,\mathrm{on}\,\,\partial\Omega,
			\end{array}
			\right.
		\end{eqnarray}
		we have
		\begin{eqnarray}\label{apr2}
			\norm{P(0)}_{2}\le C\left(\norm{{\bf F}(0)}_{1}+\norm{\rho_{tt}(0)}\right).
		\end{eqnarray}
		Such estimates will be helpful in later part of the analysis.
	\end{rem}
	\begin{lemma}\label{bounded}
		The discrete bilinear form $A(\cdot,\cdot):{\bf V}_{h}\times{\bf V}_{h}\to \mathbb{R}$ satisfies the following continuity (boundedness) property i.e. for any ${\bf u}_{h}, {\bf v}_{h}\in {\bf V}_{h}$, we have
		\begin{eqnarray*}
			|A({\bf u}_{h},{\bf v}_{h})|\le C\tnorm{{\bf u}_{h}}\tnorm{{\bf v}_{h}}.
		\end{eqnarray*}
	\end{lemma}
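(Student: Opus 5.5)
The bound follows from the definitions by two applications of the Cauchy--Schwarz inequality; no earlier lemma is needed. Since $\tnorm{\cdot}^{2}=A(\cdot,\cdot)$ and $A$ is symmetric and positive semi-definite on ${\bf V}_{h}$, one could simply invoke Cauchy--Schwarz for semi-inner products. To keep the constant visible, I would instead split $A$ into its two pieces,
\begin{equation*}
A({\bf u}_{h},{\bf v}_{h})=\sum_{K\in\mathcal{K}_h}(\alpha\nabla_{w}\times{\bf u}_h,\nabla_{w}\times{\bf v}_{h})_{K}+\mathcal{S}({\bf u}_h,{\bf v}_{h}),
\end{equation*}
and bound each piece separately.

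For the first sum, $\alpha$ is a positive constant. Cauchy--Schwarz on each $K$, followed by the discrete Cauchy--Schwarz inequality over $K\in\mathcal{K}_h$, bounds it by
\begin{equation*}
\Big(\sum_K\norm{\alpha^{1/2}\nabla_w\times{\bf u}_h}_K^2\Big)^{1/2}\Big(\sum_K\norm{\alpha^{1/2}\nabla_w\times{\bf v}_h}_K^2\Big)^{1/2}.
\end{equation*}

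For $\mathcal{S}$, I would write each boundary term as $\langle h_K^{-1/2}\jump{{\bf u}_h}_{T,e},h_K^{-1/2}\jump{{\bf v}_h}_{T,e}\rangle_{\partial K}$, and do the same for the normal jumps on $\partial K\setminus\partial\Omega$. Cauchy--Schwarz on each $\partial K$ and then over all $K$ and both jump types gives $\mathcal{S}({\bf u}_h,{\bf v}_h)\le \mathcal{S}({\bf u}_h,{\bf u}_h)^{1/2}\mathcal{S}({\bf v}_h,{\bf v}_h)^{1/2}$.

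Write $a_{\bf u}$ and $s_{\bf u}$ for the square roots of the curl part and the jump part of $A({\bf u}_h,{\bf u}_h)$, and similarly $a_{\bf v}$, $s_{\bf v}$. The two estimates give $|A({\bf u}_h,{\bf v}_h)|\le a_{\bf u}a_{\bf v}+s_{\bf u}s_{\bf v}$. A final Cauchy--Schwarz in $\mathbb{R}^2$ bounds this by $(a_{\bf u}^2+s_{\bf u}^2)^{1/2}(a_{\bf v}^2+s_{\bf v}^2)^{1/2}=\tnorm{{\bf u}_h}\tnorm{{\bf v}_h}$. The statement therefore holds with $C=1$.

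No step is a real obstacle. The only point to check is that $\mathcal{S}$ is a sum of nonnegative weighted $L^2$ inner products of jumps, which it is because each jump enters with the positive weight $h_K^{-1}$. It also does not matter whether an interior face is counted once or from both neighbouring elements: each term still has the form $\langle X,X\rangle$, so the Cauchy--Schwarz step is unaffected.
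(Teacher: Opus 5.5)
Your proof is correct and follows the same route as the paper, which simply says the bound is a consequence of the Cauchy--Schwarz inequality. Splitting $A$ into the weak-curl part and the jump part, and then applying Cauchy--Schwarz in $\mathbb{R}^2$, just makes explicit that the bound holds with $C=1$.
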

	\begin{proof}
		The proof follows from a simple consequence of Cauchy-Schwarz inequality.
	\end{proof}
	\begin{lemma}[cf. Lemma 6.1., \cite{mu2015weak}]\label{approximation}
		Assume that $\mathcal{K}_h$ is a shape regular partition of the domain $\Omega$. Then for any $\;{\bf w}\in {\bf H}^{1+s}(\Omega)$ and $w\in H^{s}(\Omega)$, we have 
		\begin{eqnarray*}
			&&\sum_{K\in\mathcal{K}_h}h_{K}^{2m}\norm{{\bf w}-{\bf T}_{h}{\bf w}}_{m,K}^{2}\le C h^{2(1+s)}\norm{{\bf w}}_{1+s, \Omega}^{2},
			\\
			&&\sum_{K\in\mathcal{K}_h}h_{K}^{2m}\norm{\nabla\times{\bf w}-\mathcal{T}_h(\nabla\times{\bf w})}_{m,K}^{2}\le C h^{2s}\norm{{\bf w}}_{1+s, \Omega}^{2},\\
			&&\sum_{K\in\mathcal{K}_h}h_{K}^{2m}\norm{w-L_{h}w}_{m,K}^{2}\le Ch^{2s}\norm{w}_{s,\Omega}^{2}.
		\end{eqnarray*}
		where $m\in[0,1]$ and $s\in[0,k]$.
	\end{lemma}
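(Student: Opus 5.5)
The plan is to work element by element and then sum over $\mathcal{K}_h$, using the local optimality of the $L^2$ projection together with scaling and Bramble--Hilbert arguments; this lemma is the standard approximation result for $L^2$ projections onto piecewise polynomial spaces on shape-regular (possibly polytopal) meshes. I treat the three estimates separately. The first concerns ${\bf T}_h$ onto piecewise ${\bf P}_k$. The second concerns $\mathcal{T}_h$ onto $[P_{k-1}(K)]^{2d-3}$ applied to $\nabla\times{\bf w}\in [H^{s}]^{2d-3}$. The third concerns $L_h$ onto piecewise $P_{k-1}$.

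\textbf{Step 1 (element-wise projections).} On each $K$ the $L^2$ projection $\Pi_K$ onto $P_n(K)$ is the best approximation in $L^2(K)$. Hence $\norm{w-\Pi_K w}_K\le \norm{w-p}_K$ for every $p\in P_n(K)$.

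\textbf{Step 2 ($H^1$ seminorm, $m=1$).} I write $w-\Pi_K w=(w-p)+\Pi_K(p-w)$ and apply the inverse inequality \eqref{inverse} to the polynomial part. This gives $|w-\Pi_K w|_{1,K}\le |w-p|_{1,K}+Ch_K^{-1}\norm{w-p}_K$. Quasi-uniformity is used here so that $h$ and $h_K$ are comparable.

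\textbf{Step 3 (fractional $m$).} Intermediate $m\in(0,1)$ follow by interpolating between $m=0$ and $m=1$.

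\textbf{Step 4 (polynomial approximation).} For polygonal/polyhedral elements I invoke shape regularity in the sense of \cite{wang2014weak}: each $K$ is star-shaped with respect to a ball of radius comparable to $h_K$. The averaged Taylor polynomial (Dupont--Scott / Bramble--Hilbert) then yields $p\in P_n(K)$ with $\norm{w-p}_{m,K}\le Ch_K^{r-m}|w|_{r,K}$ for $0\le m\le r\le n+1$. For non-integer $r$ I use the fractional version of this result, or interpolation between consecutive integer orders.

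\textbf{Step 5 (applying the degrees).}
\begin{itemize}
\item For ${\bf T}_h$: here $n=k$, and I take $r=1+s\le k+1$. This gives $h_K^{2m}\norm{{\bf w}-{\bf T}_h{\bf w}}_{m,K}^2\le Ch_K^{2(1+s)}\norm{{\bf w}}_{1+s,K}^2$.
\item For $\mathcal{T}_h$: here $n=k-1$, applied to $\nabla\times{\bf w}\in H^{s}$ with $r=s\le k$. This gives $h_K^{2s}\norm{\nabla\times{\bf w}}_{s,K}^2\le Ch_K^{2s}\norm{{\bf w}}_{1+s,K}^2$.
\item For $L_h$: here $n=k-1$ with $r=s$.
\end{itemize}
Summing over $K$ and using $h_K\le h$ together with the additivity of the squared norms gives all three bounds.

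\textbf{Main obstacle.} The hard step is the uniform Bramble--Hilbert constant on general polytopes, including the fractional-order case. It relies on the mesh regularity assumptions of \cite{wang2014weak,mu2015weak}, namely a uniform star-shapedness and chunkiness parameter. I would cite that result rather than reprove it.

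There is one caveat about ${\bf T}_h$. It maps into ${\bf V}_h^0$, which imposes a tangential boundary constraint, so it is not purely element-local. For ${\bf w}\in{\bf H}_0(\mathrm{curl})$ I would either interpret ${\bf T}_h$ on boundary elements as the element-wise projection (as in \cite{mu2015weak}), or handle the boundary dofs by a trace-inequality \eqref{trace} correction of the same order.
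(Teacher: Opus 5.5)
The paper gives no argument of its own here; it only cites Lemma 6.1 of \cite{mu2015weak}. Your element-wise scheme (local best approximation, the inverse inequality for the $H^1$ part, polynomial approximation on each element, summation) is the standard proof of that cited estimate for the \emph{unconstrained} element-wise projections onto ${\bf P}_k(K)$, $[P_{k-1}(K)]^{2d-3}$ and $P_{k-1}(K)$. However, your conclusion in Step 5 that it ``gives all three bounds'' over the full range $m\in[0,1]$, $s\in[0,k]$ is not justified. Your Step 4 needs $m\le r$, and in the second and third estimates $r=s$, so you only obtain them for $0\le m\le\min\{1,s\}$. This cannot be repaired. For $s=0$, $m=1$, $\nabla\times{\bf w}$ (resp.\ $w$) is only in $L^2$, so $\norm{\nabla\times{\bf w}-\mathcal{T}_h(\nabla\times{\bf w})}_{1,K}$ is in general infinite while the right-hand side is finite. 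The source lemma avoids this by taking the regularity index at least one. Relatedly, for $s<1$ you cannot reach fractional $m\in(0,s]$ by interpolating against the $m=1$ endpoint as in Step 3, since that endpoint is unavailable. Instead use $\norm{w-\Pi_Kw}_{m,K}\le\norm{w-p}_{m,K}+Ch_K^{-m}\norm{w-p}_K$ directly, with a fractional inverse estimate and the fractional Bramble--Hilbert bound.

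The second gap is the boundary constraint. You flagged it for ${\bf T}_h$ but not for $L_h$, which maps into $Q_h^0$. There your Step 1 fails, because the local best approximation is taken over $\{q\in P_{k-1}(K): q|_e=0 \text{ for } e\subset\partial K\cap\partial\Omega\}$, which need not contain your averaged Taylor polynomial. For $k=1$ this space is $\{0\}$ on every element with a boundary face. So for $w\equiv1$ the boundary layer alone gives $\sum_K\norm{w-L_hw}_K^2\ge ch$, contradicting $Ch^{2s}\norm{w}_s^2$ for $s>1/2$; a constant field with nonzero tangential trace does the same to ${\bf T}_h$. Hence the statement as written, for arbitrary ${\bf w}\in{\bf H}^{1+s}(\Omega)$ and $w\in H^s(\Omega)$, is false, and you must choose what to prove. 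One option is the element-wise version of \cite{mu2015weak}, which your argument covers as it stands. The other is what the paper actually uses, since ${\bf e}_h={\bf T}_h{\bf E}-{\bf E}_h$ and $\tilde e_h=L_hP-P_h$ serve as test functions in ${\bf V}_h^0\times Q_h^0$: the constrained version for ${\bf w}\in{\bf H}_0(\mathrm{curl};\Omega)$, $w\in H^1_0(\Omega)$. That version needs a real extra step that your ``trace-inequality correction'' only gestures at. The trace inequality controls the boundary trace of the unconstrained approximant $p$. You must still exhibit, on every boundary element, a polynomial $c$ with the same trace as $p$ on the boundary faces (tangential trace, for ${\bf T}_h$), with $\norm{c}_K$ bounded by $Ch_K^{1/2}$ times the $L^2$ norm of that trace, uniformly over element shapes. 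This includes elements with several boundary faces, where the constrained space may be trivial. Finally, the polytopal shape regularity of \cite{wang2014weak} is formulated through a shape-regular circumscribed simplex with bounded overlap (plus face-pyramid conditions), not as star-shapedness with respect to a ball. So either add a uniform chunkiness assumption or carry out the approximation on those simplices after a Stein extension.
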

	\begin{lemma}[cf. Lemma 4.7., \cite{mohapatra2025new}]\label{Techres2lem}
		Let ${\bf v}\in {\bf H}(\mathrm{curl};\Omega)$ be an arbitrary element and for any $\xi\in [P_{k-1}(K)]^{2d-3}$, following identity holds true
		\begin{equation*}
			(\alpha\nabla_{w}\times ({\bf T}_h{\bf v}),\xi)_K=(\mathcal{T}_h(\alpha\nabla\times{\bf v}),\xi)_K+\langle {\bf n}\times(\avg{{\bf T}_h{\bf v}}-{\bf v}),\alpha\xi\rangle_{\partial K}\, \forall K\in\mathcal{K}_h.
		\end{equation*}
	\end{lemma}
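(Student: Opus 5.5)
The identity should follow directly from the definition \eqref{modweakcurl} of the modified weak curl, combined with the orthogonality of the $L^2$-projections ${\bf T}_h$ and $\mathcal{T}_h$ and one elementwise integration by parts. Since $\alpha$ is a constant, it can be moved freely across inner products, so it suffices to treat $\alpha=1$ and multiply through at the end.

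Fix $K\in\mathcal{K}_h$ and $\xi\in[P_{k-1}(K)]^{2d-3}$. Applying \eqref{modweakcurl} with ${\bf v}_h={\bf T}_h{\bf v}$ and $w=\xi$ gives
\begin{equation*}
(\nabla_w\times({\bf T}_h{\bf v}),\xi)_K=({\bf T}_h{\bf v},\nabla\times\xi)_K+\langle{\bf n}\times\avg{{\bf T}_h{\bf v}},\xi\rangle_{\partial K}.
\end{equation*}
Since $\nabla\times\xi$ is a polynomial of degree at most $k-2\le k$ on $K$, and ${\bf T}_h$ is the $L^2$-projection onto piecewise ${\bf P}_k$ functions, we may replace ${\bf T}_h{\bf v}$ by ${\bf v}$ in the volume term. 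We then integrate by parts on $K$, which is legitimate for ${\bf v}\in{\bf H}(\mathrm{curl};K)$ and polynomial $\xi$:
\begin{equation*}
({\bf v},\nabla\times\xi)_K=(\nabla\times{\bf v},\xi)_K-\langle{\bf n}\times{\bf v},\xi\rangle_{\partial K},
\end{equation*}
with the sign convention matching that used in \eqref{modweakcurl}. Because $\xi\in[P_{k-1}(K)]^{2d-3}$, we have $(\nabla\times{\bf v},\xi)_K=(\mathcal{T}_h(\nabla\times{\bf v}),\xi)_K$. Substituting and combining the two boundary terms gives $\langle{\bf n}\times(\avg{{\bf T}_h{\bf v}}-{\bf v}),\xi\rangle_{\partial K}$. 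Multiplying by $\alpha$ and using $\mathcal{T}_h(\alpha\,\cdot)=\alpha\mathcal{T}_h(\cdot)$ then yields the claimed identity.

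The main obstacle is justifying the boundary term for ${\bf v}$ that is only in ${\bf H}(\mathrm{curl})$. In that case ${\bf n}\times{\bf v}$ is defined only in the dual-trace sense, so the pairing $\langle\cdot,\cdot\rangle_{\partial K}$ must be read as a duality pairing. Where the lemma is actually used, ${\bf v}$ is ${\bf H}^1$-regular, so the pairing is an ordinary integral. I would also check the sign conventions in 2D, where the curl of a vector is a scalar and the curl of a scalar is a vector. The integration-by-parts formula has the same form there, because ${\bf n}\times{\bf v}$ is interpreted as the scalar $n_1v_2-n_2v_1$.
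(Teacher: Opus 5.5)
Your computation is the standard one, and the algebra is right. The paper gives no argument of its own here, only the citation. Your integration-by-parts sign matches \eqref{modweakcurl}, also in 2D with ${\bf n}\times{\bf v}=n_1v_2-n_2v_1$. The $\mathcal{T}_h$ step is harmless because $\mathcal{T}_h$ is an unconstrained elementwise projection. After these steps the claimed identity is \emph{equivalent} to the single relation $({\bf T}_h{\bf v}-{\bf v},\nabla\times\xi)_K=0$ for all $\xi\in[P_{k-1}(K)]^{2d-3}$.

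So everything rests on the step where you replace ${\bf T}_h{\bf v}$ by ${\bf v}$, which you justify by calling ${\bf T}_h$ the $L^2$-projection onto piecewise ${\bf P}_k$ functions. That is not how the paper defines it: ${\bf T}_h$ maps $[L^2(\Omega)]^d$ into ${\bf V}_h^0$. The analysis relies on this: ${\bf e}_h={\bf T}_h{\bf E}-{\bf E}_h$ is used as a test function in ${\bf V}_h^0$, and Remark \ref{initialchoice} concludes ${\bf E}_{htt}(0)={\bf T}_h{\bf E}_{tt}(0)$.

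The $L^2$-projection onto ${\bf V}_h^0$ still acts element by element. However, on an element with a face on $\partial\Omega$ it projects onto fields in ${\bf P}_k(K)$ with vanishing tangential trace on $\partial K\cap\partial\Omega$. $\nabla\times\xi$ need not belong to that subspace: in 2D, ${\bf n}\times(\nabla\times\xi)=-\partial_{\bf n}\xi$ on a boundary face.

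Concretely, take $k=2$ and $K$ with a single face $e$ on $\partial\Omega$. Let $\xi=-{\bf n}\cdot{\bf x}$, so that $\nabla\times\xi={\bf t}$ is the unit tangent of $e$, and let ${\bf v}={\bf t}$. Then $({\bf T}_h{\bf v}-{\bf v},\nabla\times\xi)_K=-\norm{1-w}_K^2\neq 0$, where $w$ is the $L^2(K)$-projection of $1$ onto $\{p\in P_2(K):\ p|_e=0\}$. The defect persists for ${\bf v}\in{\bf H}_0(\mathrm{curl};\Omega)$: a smooth such field equal to $\lambda^2(1-w){\bf t}$ on $K$, with $\lambda$ affine and vanishing on $e$, gives $-\norm{\lambda(1-w)}_K^2$. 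So with the paper's literal definition your step fails, and the identity itself is false on boundary elements for $k\ge 2$.

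What your argument does prove is the identity in three cases:
\begin{itemize}
\item on every element without a boundary face;
\item on all elements when $k=1$, since then $\nabla\times\xi=0$;
\item on all elements when ${\bf T}_h$ is read as the unconstrained elementwise projection onto ${\bf P}_k(K)$, which is presumably what the cited lemma intends.
\end{itemize}
You should state that reading explicitly as a hypothesis rather than slip it in. You should also note that it conflicts with ${\bf T}_h{\bf E}\in{\bf V}_h^0$ as used elsewhere in the paper.

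By comparison, the point you flagged as the main obstacle is minor. That point was reading $\langle{\bf n}\times{\bf v},\xi\rangle_{\partial K}$ as a duality pairing when ${\bf v}$ is only in ${\bf H}(\mathrm{curl};K)$. Since $\xi$ is a polynomial, the Green formula holds in that dual sense.
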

	Further, we present some different variants of the continuous Gronwall's inequality which will be used depending on the context.
	
	\begin{lemma}[Gronwall's Inequality 1, cf. Lemma 3.1, \cite{riviere2008discontinuous}]\label{g1}
		Let $f$, $g$, and $h$ be piecewise continuous nonnegative functions defined on an interval $(a,b)$.
		Assume that $g$ is nondecreasing. Assume that there exists a positive constant $C$,
		independent of $t$, such that
		\begin{equation*}
			\forall t \in (a,b), \qquad
			f(t) + h(t) \le g(t) + C \int_a^t f(s)\, ds .
		\end{equation*}
		Then,
		\begin{equation*}
			\forall t \in (a,b), \qquad
			f(t) + h(t) \le e^{C(t-a)}\, g(t).
		\end{equation*}
	\end{lemma}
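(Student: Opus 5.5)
The plan is to drop $h$ first to get a linear integral inequality for $f$ alone, solve it with an integrating factor, and then put the result back into the hypothesis. The monotonicity of $g$ is what lets us freeze the right-hand side at a fixed endpoint.

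Fix an arbitrary $t_0\in(a,b)$ and work on $(a,t_0]$. Since $g$ is nondecreasing, $g(t)\le g(t_0)$ for $t\le t_0$. Since $h\ge 0$, the hypothesis yields
\begin{equation*}
f(t)\le g(t_0)+C\int_a^t f(s)\,ds \quad \forall t\in(a,t_0].
\end{equation*}
Introduce $F(t):=\int_a^t f(s)\,ds$. Because $f$ is piecewise continuous and nonnegative, $F$ is continuous, nondecreasing, $F(a)=0$, and $F'(t)=f(t)$ except at finitely many points. The inequality above then reads $F'(t)-CF(t)\le g(t_0)$ almost everywhere on $(a,t_0]$.

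Next, multiply by the integrating factor $e^{-C(t-a)}$ to get
\begin{equation*}
\frac{d}{dt}\bigl(e^{-C(t-a)}F(t)\bigr)\le g(t_0)e^{-C(t-a)}.
\end{equation*}
Integrate from $a$ to $t$. This is legitimate because $e^{-C(t-a)}F$ is absolutely continuous, being a product of a smooth function with an integral of a piecewise continuous function. The result is $F(t)\le \frac{g(t_0)}{C}\bigl(e^{C(t-a)}-1\bigr)$ for $t\le t_0$. If $C=0$ the claim is immediate from the hypothesis, so we may assume $C>0$.

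Finally, evaluate the original hypothesis at $t=t_0$ and insert this bound:
\begin{equation*}
f(t_0)+h(t_0)\le g(t_0)+C F(t_0)\le g(t_0)+g(t_0)\bigl(e^{C(t_0-a)}-1\bigr)=e^{C(t_0-a)}g(t_0).
\end{equation*}
Since $t_0$ was arbitrary, this is the claim.

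The only delicate point is the differentiation step under mere piecewise continuity. If one prefers to avoid calculus, the same bound follows by iterating the integral inequality $F'\le g(t_0)+CF$ in Picard fashion, since the resulting series sums to the exponential. Either route is routine, so I expect no real obstacle beyond this bookkeeping.
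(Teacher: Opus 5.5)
Your argument is correct, and it is essentially the standard integrating-factor proof of this inequality. The paper gives no proof of its own and only cites Rivi\`ere's Lemma 3.1, whose argument sets $F(t)=\int_a^t f(s)\,ds$, multiplies $F'-CF\le g$ by $e^{-C(t-a)}$, integrates, and substitutes $CF(t)\le g(t)\bigl(e^{C(t-a)}-1\bigr)$ back into the hypothesis. The only difference is cosmetic: you freeze $g$ at $t_0$ before integrating, whereas the standard proof uses the monotonicity of $g$ afterwards, via $\int_a^t g(s)e^{-C(s-a)}\,ds\le g(t)\int_a^t e^{-C(s-a)}\,ds$.
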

	\begin{lemma}[Gronwall's Inequality 2, cf. Theorem 1, \cite{sever2003some}]
		Let $x$, $\Psi$ and $\chi$ be real continuous functions defined on $[a,b]$, with
		$\chi(t) \ge 0$ for $t \in [a,b]$. Suppose that on $[a,b]$ we have the inequality
		\begin{equation*}
			x(t) \le \Psi(t) + \int_a^t \chi(s)\, x(s)\, ds .
		\end{equation*}
		Then
		\begin{equation*}
			x(t) \le \Psi(t) + \int_a^t \chi(s)\, \Psi(s)
			\exp\!\left( \int_s^t \chi(u)\, du \right) ds ,
		\end{equation*}
		for all $t \in [a,b]$.
	\end{lemma}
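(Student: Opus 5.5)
The plan is the standard integrating-factor argument applied to the integral term rather than to $x$ itself. Set
\begin{equation*}
R(t):=\int_a^t \chi(s)\,x(s)\,ds,\qquad t\in[a,b].
\end{equation*}
Since $\chi$ and $x$ are continuous on $[a,b]$, $R$ is continuously differentiable with $R'(t)=\chi(t)x(t)$ and $R(a)=0$. The hypothesis reads $x(t)\le \Psi(t)+R(t)$. Because $\chi(t)\ge 0$, multiplying this inequality by $\chi(t)$ preserves its direction, which gives the linear differential inequality
\begin{equation*}
R'(t)\le \chi(t)\Psi(t)+\chi(t)R(t),\qquad t\in[a,b].
\end{equation*}
This is the one place where the sign condition on $\chi$ enters.

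Next I introduce the integrating factor $\mu(t):=\exp\bigl(-\int_a^t\chi(u)\,du\bigr)$. It is positive and $C^1$, with $\mu'=-\chi\mu$. Multiplying the differential inequality by $\mu(t)>0$ and rearranging gives
\begin{equation*}
\frac{d}{dt}\bigl(\mu(t)R(t)\bigr)=\mu(t)\bigl(R'(t)-\chi(t)R(t)\bigr)\le \mu(t)\chi(t)\Psi(t).
\end{equation*}
I then integrate from $a$ to $t$ and use $R(a)=0$ and $\mu(a)=1$. This yields $\mu(t)R(t)\le\int_a^t\mu(s)\chi(s)\Psi(s)\,ds$. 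Dividing by $\mu(t)$ and using $\mu(s)/\mu(t)=\exp\bigl(\int_s^t\chi(u)\,du\bigr)$, I obtain
\begin{equation*}
R(t)\le \int_a^t \chi(s)\Psi(s)\exp\Bigl(\int_s^t\chi(u)\,du\Bigr)ds.
\end{equation*}

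Finally, inserting this bound into $x(t)\le\Psi(t)+R(t)$ gives exactly the claimed estimate for every $t\in[a,b]$. No part of the argument is a genuine obstacle. The points needing care are the justification that $R\in C^1$, which follows from the continuity assumptions, and the sign bookkeeping: $\chi\ge0$ when forming the differential inequality, and $\mu>0$ when multiplying by and dividing by the integrating factor. Note that no sign assumption on $\Psi$ or $x$ is needed, since $\Psi$ enters only linearly.
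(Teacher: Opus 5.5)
Your proof is correct. Setting $R(t)=\int_a^t\chi(s)x(s)\,ds$, using $\chi\ge 0$ to obtain $R'\le\chi\Psi+\chi R$, and integrating against the factor $\exp\bigl(-\int_a^t\chi(u)\,du\bigr)$ gives exactly the stated bound; the paper gives no proof of its own and simply cites Theorem 1 of \cite{sever2003some}, whose classical proof is this same integrating-factor argument.
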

	Upon selecting a positive constant $K$ in place of $\chi(t)$ i.e. $\chi(t)=K$ in the above Gronwall's inequality, we thus state a modified Gronwall's result below:
	\begin{lemma}[Modified Gronwall's Inequality]\label{g2}
		Let $x$ and $\Psi$ be real continuous functions defined on $[a,b]$ with
		$\Psi(t) \ge 0$ for $t \in [a,b]$, with $a>0$. Suppose that on $[a,b]$ we have the inequality
		\begin{equation*}
			x(t) \le \Psi(t) + K\int_a^t \, x(s)\, ds .
		\end{equation*}
		Then
		\begin{equation*}
			x(t) \le \Psi(t) + K\exp(Kt)\int_a^t \Psi(s)\,ds,
		\end{equation*}
		for all $t \in [a,b]$.
	\end{lemma}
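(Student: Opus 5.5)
The plan is to specialize Gronwall's Inequality 2 to $\chi\equiv K$ and then enlarge the inner exponential by an elementary bound. Since $x$ and $\Psi$ are continuous on $[a,b]$ and $\chi(t)=K$ is a nonnegative constant, continuous on $[a,b]$, the hypothesis
\[
x(t)\le \Psi(t)+K\int_a^t x(s)\,ds
\]
is exactly the hypothesis of Gronwall's Inequality 2 with $\chi\equiv K$. That lemma therefore gives, for every $t\in[a,b]$,
\[
x(t)\le \Psi(t)+K\int_a^t \Psi(s)\exp\Big(\int_s^t K\,du\Big)\,ds=\Psi(t)+K\int_a^t \Psi(s)\,e^{K(t-s)}\,ds .
\]
Here we use $K\ge 0$. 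This is implicit in the statement ("a positive constant $K$") and is required for $\chi\ge 0$.

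The only remaining step is to bound the kernel $e^{K(t-s)}$ uniformly in $s\in[a,t]$. Since $a>0$, we have $s\ge a>0$, so $t-s\le t$. Together with $K\ge 0$ this gives $e^{K(t-s)}\le e^{Kt}$. We also have $\Psi(s)\ge 0$, so
\[
K\int_a^t \Psi(s)e^{K(t-s)}\,ds\le K e^{Kt}\int_a^t\Psi(s)\,ds .
\]
Substituting this into the previous display yields
\[
x(t)\le \Psi(t)+K\exp(Kt)\int_a^t\Psi(s)\,ds
\]
for all $t\in[a,b]$, which is the claim.

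There is no real obstacle. The one point that needs care is where the hypotheses enter the inequality $e^{K(t-s)}\le e^{Kt}$: the sign of $K$ and the lower bound $s\ge a$ on the integration variable. The condition $a>0$ is in fact more than needed, since $s\ge 0$ already suffices. The nonnegativity of $\Psi$ is what allows the pointwise bound on the kernel to pass through the integral.
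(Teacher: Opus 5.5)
Your proof is correct and takes the same route as the paper, which obtains this lemma simply by setting $\chi\equiv K$ in Gronwall's Inequality 2. Your extra step, bounding $e^{K(t-s)}\le e^{Kt}$ using $s\ge a>0$, $K\ge 0$ and $\Psi\ge 0$, is exactly the bound the paper leaves unstated to reach the displayed form.
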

	Now, we present the stability of the numerical scheme \eqref{sdalg1}-\eqref{sdalg4} for the case when $\epsilon=1$. For any other $\epsilon>0$, the proof is  easily extendable. 
	\begin{lemma}
		Assume that the supplied data of model \eqref{model3} satisfy
		\begin{eqnarray*}
			&&{\bf u}\in{\bf H}^{2}(\Omega)\cap{\bf H}_{0}(\mathrm{curl};\Omega), 
			\,
			{\bf v}\in{\bf H}^{2}(\Omega)\cap{\bf H}_{0}(\mathrm{curl};\Omega),\\
			&&{\bf F}\in{\bf H}^{2}\!\left(0,T;{\bf H}^{2}(\Omega)\right), 
			\, 
			\rho\in H^{4}\!\left(0,T;L^{2}(\Omega)\right).
		\end{eqnarray*}
		Then the semi-discrete parameter-free dG scheme \eqref{sdalg1}--\eqref{sdalg4}
		satisfies the following stability estimates:
		\begin{eqnarray}\label{stab1}
			\norm{{\bf E}_{ht}(t)}^{2}
			+\tnorm{{\bf E}_{h}(t)}^{2}
			\le
			C\left(
			\norm{{\bf u}}_{2}^{2}
			+\norm{{\bf v}}_{2}^{2}
			+\norm{{\bf F}}_{{\bf H}^{2}({\bf H}^{2}(\Omega))}^{2}
			+\norm{\rho}_{H^{4}(L^{2}(\Omega))}^{2}
			\right),~~
		\end{eqnarray}
		and
		\begin{eqnarray}\label{stab2}
			\norm{P_{h}(t)}^{2}
			\le
			C\left(
			\norm{{\bf u}}_{2}^{2}
			+\norm{{\bf v}}_{2}^{2}
			+\norm{{\bf F}}_{{\bf H}^{2}({\bf H}^{2}(\Omega))}^{2}
			+\norm{\rho}_{H^{4}(L^{2}(\Omega))}^{2}
			\right).
		\end{eqnarray}
	\end{lemma}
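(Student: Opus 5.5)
The natural route is an energy argument. The saddle-point term is eliminated by testing with the time derivative of the discrete constraint, and $P_h$ is then recovered from the discrete inf-sup condition of Lemma \ref{infsup}.

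\textbf{Step 1: energy identity.} Take $\epsilon=1$ and choose ${\bf v}_h={\bf E}_{ht}$ in \eqref{sdalg1}. Differentiating \eqref{sdalg2} in time gives $B({\bf E}_{ht},q_h)=-(\rho_t,q_h)$ for all $q_h\in Q_h^0$. Hence $B({\bf E}_{ht},P_h)=-(\rho_t,P_h)$, and therefore
\begin{equation*}
\frac12\frac{d}{dt}\left(\norm{{\bf E}_{ht}}^2+\tnorm{{\bf E}_h}^2\right)=({\bf F},{\bf E}_{ht})-(\rho_t,P_h).
\end{equation*}
The term $(\rho_t,P_h)$ couples the energy to $P_h$. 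This is where the extra time regularity on ${\bf F}$ and $\rho$ enters.

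\textbf{Step 2: bound $P_h$.} Apply Lemma \ref{infsup} to $q_h=P_h$ to get ${\bf w}_h$ with $B({\bf w}_h,P_h)=\norm{P_h}^2$ and $\tnorm{{\bf w}_h}_1\le C\norm{P_h}$. Testing \eqref{sdalg1} with ${\bf w}_h$ and using Lemma \ref{bounded} gives
\begin{equation*}
\norm{P_h}\le C\left(\norm{{\bf E}_{htt}}+\tnorm{{\bf E}_h}+\norm{{\bf F}}\right).
\end{equation*}
So $\norm{{\bf E}_{htt}}$ must be controlled.

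\textbf{Step 3: the differentiated system.} Differentiating \eqref{sdalg1}--\eqref{sdalg2} once in $t$, the pair $({\bf E}_{ht},P_{ht})$ solves the same scheme with data $({\bf F}_t,\rho_t)$. Step 1 then gives
\begin{equation*}
\frac{d}{dt}\left(\norm{{\bf E}_{htt}}^2+\tnorm{{\bf E}_{ht}}^2\right)=2({\bf F}_t,{\bf E}_{htt})-2(\rho_{tt},P_{ht}).
\end{equation*}
Step 2 applied to the differentiated system gives $\norm{P_{ht}}\le C(\norm{{\bf E}_{httt}}+\tnorm{{\bf E}_{ht}}+\norm{{\bf F}_t})$. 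This creates a hierarchy.

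To close it without going up indefinitely, treat the pressure term in the differentiated identity by integrating by parts in time:
\begin{equation*}
\int_0^t(\rho_{tt},P_{ht})\,ds=(\rho_{tt},P_h)\big|_0^t-\int_0^t(\rho_{ttt},P_h)\,ds.
\end{equation*}
The same is done in Step 1 if needed. With this, only $\norm{P_h}$ appears, and Step 2 bounds it by the level-two energy $\norm{{\bf E}_{htt}}+\tnorm{{\bf E}_h}$. The boundary term $(\rho_{tt}(t),P_h(t))$ is absorbed via Young's inequality with a small weight against $\norm{{\bf E}_{htt}(t)}^2$, using $\norm{\rho_{tt}(t)}\le C\norm{\rho}_{H^4(L^2)}$ from the Sobolev embedding in time. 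The remaining integrals are handled by Cauchy--Schwarz and Young.

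Summing the two energy levels gives an inequality of the form $f(t)\le g(t)+C\int_0^t f$, where $f$ is the combined energy. Lemma \ref{g1} (or Lemma \ref{g2}) then yields \eqref{stab1} for both levels simultaneously. Inserting this into Step 2 gives \eqref{stab2}.

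\textbf{Initial data.} The initial energies must be bounded by the data. Since ${\bf E}_h(0)=\mathbb{R}_h{\bf u}$ and ${\bf E}_{ht}(0)=\mathbb{R}_h{\bf v}$, Lemma \ref{ritzestimate} together with the stability of ${\bf T}_h$ bounds $\tnorm{\mathbb{R}_h{\bf u}}$ and $\norm{\mathbb{R}_h{\bf v}}$, and likewise $\tnorm{{\bf E}_{ht}(0)}$. This uses $\norm{{\bf u}}_2$ and $\norm{{\bf v}}_2$, together with the potentials at $t=0$ controlled via \eqref{apr1} and \eqref{apr2}. By Remark \ref{initialchoice}, ${\bf E}_{htt}(0)={\bf T}_h{\bf E}_{tt}(0)$, so $\norm{{\bf E}_{htt}(0)}\le\norm{{\bf E}_{tt}(0)}$. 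From the PDE at $t=0$ this is bounded by $\norm{{\bf F}(0)}+\norm{{\bf u}}_2+\norm{P(0)}_1$, and \eqref{apr2} controls the last term through ${\bf F}$ and $\rho_{tt}$. Finally, $P_h(0)=\mathbb{E}_hP(0)$ is bounded by Lemma \ref{ritzestimate}.

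\textbf{Main obstacle.} The hardest part is closing Step 3: the pressure coupling must be arranged so that only level-two quantities appear and no third time derivative of ${\bf E}_h$ is needed. If integration by parts in time does not suffice, the fallback is a weighted sum of the two energy identities, choosing weights small enough that the $\norm{P_h}$ contributions are absorbed.
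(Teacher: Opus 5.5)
Your proposal is correct and follows essentially the paper's route: the same two energy identities at the levels $({\bf E}_{ht},{\bf E}_h)$ and $({\bf E}_{htt},{\bf E}_{ht})$, integration by parts in time of $\int_0^t(\rho_{ss},P_{hs})\,ds$ so that only $P_h$ appears, the inf-sup bound $\norm{P_h}\le C(\norm{{\bf E}_{htt}}+\tnorm{{\bf E}_h}+\norm{{\bf F}})$, and initial data controlled through ${\bf E}_{htt}(0)={\bf T}_h{\bf E}_{tt}(0)$ (Remark \ref{initialchoice}) together with \eqref{apr1}--\eqref{apr2}. The only difference is organizational: you substitute the inf-sup bound into the summed two-level energy, absorb the boundary term $(\rho_{tt}(t),P_h(t))$, and apply Gronwall once, whereas the paper bounds the two levels separately and then applies Gronwall to $\norm{P_h}^2$; your ordering is slightly tidier because the boundary term is absorbed before Gronwall rather than carried through it.
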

	\begin{proof}
		Differentiating \eqref{sdalg2} with respect to $t$, we have
		\begin{equation}\label{st1}
			B({\bf E}_{ht},q_h)=-(\rho_{t},q_{h})\,\forall q_{h}\in Q_{h}^{0}.
		\end{equation}
		Substituting $q_{h}=P_{h}$ in \eqref{st1}, then adding it with \eqref{sdalg1} by replacing ${\bf v}_{h}={\bf E}_{ht}$ and applying some standard inequalities, we achieve
		\begin{eqnarray*}
			\frac{1}{2}\frac{d}{dt}\left(\norm{{\bf E}_{ht}}^{2}+\tnorm{{\bf E}_{h}}^{2}\right)&=&({\bf F},{\bf E}_{ht})-(\rho_{t}, P_{h})\nonumber\\
			&\le&\norm{{\bf F}}\norm{{\bf E}_{ht}}+\norm{\rho_{t}}\norm{ P_{h}}\nonumber\\
			&\le&\frac{1}{2}\norm{{\bf F}}^{2}+\frac{1}{2}\norm{{\bf E}_{ht}}^{2}+\frac{1}{2}\norm{\rho_{t}}^{2}+\frac{1}{2}\norm{ P_{h}}^{2}.
		\end{eqnarray*}
		Integrating both sides of above inequality from $0$ to $t$ $(0<t\le T)$, we derive
		\begin{eqnarray*}
			\norm{{\bf E}_{ht}(t)}^{2}+\tnorm{{\bf E}_{h}(t)}^{2}
			&\le&\norm{{\bf E}_{ht}(0)}^{2}+\tnorm{{\bf E}_{h}(0)}^{2}\nonumber\\
			&&+\int_{0}^{t}\norm{{\bf F}}^{2}ds+\int_{0}^{t}\norm{{\bf E}_{hs}}^{2}ds\nonumber\\
			&&+\int_{0}^{t}\norm{\rho_{s}}^{2}ds+\int_{0}^{t}\norm{ P_{h}}^{2}ds.
		\end{eqnarray*}
		From Gronwall's inequality 1 (cf. Lemma \ref{g1}), it follows that
		\begin{eqnarray}\label{st2}
			\norm{{\bf E}_{ht}(t)}^{2}+\tnorm{{\bf E}_{h}(t)}^{2}
			&\le&C\norm{{\bf E}_{ht}(0)}^{2}+C\tnorm{{\bf E}_{h}(0)}^{2}+C\int_{0}^{t}\norm{{\bf F}}^{2}ds\nonumber\\
			&&+C\int_{0}^{t}\norm{\rho_{s}}^{2}ds+C\int_{0}^{t}\norm{ P_{h}}^{2}ds.~~~~~~~~
		\end{eqnarray}
		Now we need to find a bound the term $\int_{0}^{t}\norm{ P_{h}}^{2}ds$. Note that if $\norm{P_{h}}=0$ then our proof is complete and we shall have a stability estimate for the discrete electric field. Hence we proceed by assuming  $\norm{P_{h}}\ne0$.
		
		We differentiate \eqref{sdalg1} and \eqref{st1} with respect to $t$ to have
		\begin{eqnarray*}
			\left\{
			\begin{array}{ll}
				({\bf E}_{httt},{\bf v}_h)+A({\bf E}_{ht},{\bf v}_h)-B({\bf v}_{h},P_{ht})=({\bf F}_{t},{\bf v}_h)\;\forall{\bf v}_h\in{\bf V}_h^0,\nonumber\\
				B({\bf E}_{htt},q_h)=-(\rho_{tt},q_{h})\,\forall q_{h}\in Q_{h}^{0}.
			\end{array}
			\right.
		\end{eqnarray*}
		Replacing ${\bf v}_{h}={\bf E}_{htt}$ and $q_{h}=P_{ht}$ in the above equations and adding them, we achieve 
		\begin{eqnarray*}
			\frac{1}{2}\frac{d}{dt}\left(\norm{{\bf E}_{htt}}^{2}+\tnorm{{\bf E}_{ht}}^{2}\right)=({\bf F}_{t},{\bf E}_{htt})-(\rho_{tt}, P_{ht}).
		\end{eqnarray*}
		Integrating both sides of above equation from $0$ to $t$ $(0<t\le T)$, further using the fact ${\bf E}_{htt}(0)={\bf T}_{h}{\bf E}_{tt}(0)$ (cf. Remark \ref{initialchoice}), integration by parts, Cauchy-Schwarz and Young's inequality, we deduce
		\begin{eqnarray*}
			\norm{{\bf E}_{htt}(t)}^{2}+\tnorm{{\bf E}_{ht}(t)}^{2}&=&\norm{{\bf E}_{htt}(0)}^{2}+\tnorm{{\bf E}_{ht}(0)}^{2}\nonumber\\
			&&+2\int_{0}^{t}({\bf F}_{s},{\bf E}_{hss})ds-2\int_{0}^{t}(\rho_{ss}, P_{hs})ds\nonumber\\
			&=&\norm{{\bf T}_{h}{\bf E}_{tt}(0)}^{2}+\tnorm{{\bf E}_{ht}(0)}^{2}+2\int_{0}^{t}({\bf F}_{s},{\bf E}_{hss})ds\nonumber\\
			&&-2(\rho_{tt}(t),P_{h}(t))+2(\rho_{tt}(0),P_{h}(0))\nonumber\\
			&&+2\int_{0}^{t}(\rho_{sss}, P_{h})ds\nonumber\\
			&\le&\norm{{\bf E}_{tt}(0)}^{2}+\tnorm{{\bf E}_{ht}(0)}_{1}^{2}+\int_{0}^{t}\norm{{\bf F}_{s}}^{2}ds\nonumber\\
			&&+\int_{0}^{t}\norm{{\bf E}_{hss}}^{2}ds+2\norm{\rho_{tt}(t)}\norm{P_{h}(t)}+\norm{\rho_{tt}(0)}^{2}\nonumber\\
			&&+\norm{P_{h}(0)}^{2}+\int_{0}^{t}\norm{\rho_{sss}}^{2}ds+\int_{0}^{t}\norm{P_{h}}^{2}ds.				
		\end{eqnarray*}
		By Modified Gronwall's inequality (cf. Lemma \ref{g2}), we deduce
		\begin{eqnarray}\label{st3}
			\norm{{\bf E}_{htt}(t)}^{2}&\le&C\norm{{\bf E}_{tt}(0)}^{2}+C\tnorm{{\bf E}_{ht}(0)}_{1}^{2}+C\norm{P_{h}(0)}^{2}+2\norm{\rho_{tt}(t)}\norm{P_{h}(t)}\nonumber\\
			&&+C\norm{\rho_{tt}(0)}^{2}+C\int_{0}^{t}\norm{{\bf F}_{s}}^{2}ds+C\int_{0}^{t}\norm{\rho_{sss}}^{2}ds\nonumber\\
			&&+C\int_{0}^{t}\norm{P_{h}}^{2}ds+C\int_{0}^{t}\norm{\rho_{ss}(s)}\norm{P_{h}(s)}ds\nonumber\\
			&\le&C\norm{{\bf E}_{tt}(0)}^{2}+C\tnorm{{\bf E}_{ht}(0)}_{1}^{2}+C\norm{P_{h}(0)}^{2}+2\norm{\rho_{tt}(t)}\norm{P_{h}(t)}\nonumber\\
			&&+C\norm{\rho_{tt}(0)}^{2}+C\int_{0}^{t}\norm{{\bf F}_{s}}^{2}ds+C\int_{0}^{t}\norm{\rho_{ss}}^{2}ds\nonumber\\
			&&+C\int_{0}^{t}\norm{\rho_{sss}}^{2}ds+C\int_{0}^{t}\norm{P_{h}}^{2}ds.~~~~~~~~
		\end{eqnarray}
		From inf-sup condition (cf. Lemma \ref{infsup}) for $P_{h}\in Q_{h}^{0}$ there exists some ${\bf v}_{h}\in {\bf V}_{h}^{0}$, such that $\norm{P_{h}}^{2}=B({\bf v}_{h},P_{h})$ and $\tnorm{{\bf v}_{h}}_{1}\le C\norm{P_{h}}$. Applying above stated arguments, then by Cauchy-Schwarz inequality and using Lemma \ref{bounded}, we can write
		\begin{eqnarray*}
			\norm{P_{h}}^{2}&=&B({\bf v}_{h},P_{h})\nonumber\\
			&=&({\bf E}_{htt},{\bf v}_h)+A({\bf E}_{h},{\bf v}_h)-({\bf F},{\bf v}_h)\nonumber\\
			&\le&\norm{{\bf E}_{htt}}\tnorm{{\bf v}_{h}}_{1}+C\tnorm{{\bf E}_{h}}\tnorm{{\bf v}_{h}}_{1}+\norm{{\bf F}}\tnorm{{\bf v}_{h}}_{1}\nonumber\\
			&\le&C\norm{{\bf E}_{htt}}\norm{P_{h}}+C\tnorm{{\bf E}_{h}}\norm{P_{h}}+C\norm{{\bf F}}\norm{P_{h}}.
		\end{eqnarray*}
		Hence, we deduce
		\begin{eqnarray}\label{st4}
			\norm{P_{h}}\le C\norm{{\bf E}_{htt}}+ C\tnorm{{\bf E}_{h}} + C\norm{{\bf F}}.
		\end{eqnarray}
		Combining \eqref{st2}-\eqref{st4} and applying Young's inequality, we have
		\begin{eqnarray*}
			\norm{P_{h}}^{2}
			&\le&C\norm{{\bf E}_{htt}}^{2}+C\tnorm{{\bf E}_{h}}^{2}+C\norm{{\bf F}}^{2}\nonumber\\
			&\le&C\norm{{\bf E}_{tt}(0)}^{2}+C\tnorm{{\bf E}_{ht}(0)}_{1}^{2}+C\norm{P_{h}(0)}^{2}+2\norm{\rho_{tt}(t)}\norm{P_{h}(t)}\nonumber\\
			&&+C\norm{\rho_{tt}(0)}^{2}+C\int_{0}^{t}\norm{{\bf F}_{s}}^{2}ds+C\int_{0}^{t}\norm{\rho_{ss}}^{2}ds+C\int_{0}^{t}\norm{\rho_{sss}}^{2}ds\nonumber\\
			&&+C\int_{0}^{t}\norm{P_{h}}^{2}ds+C\norm{{\bf E}_{ht}(0)}^{2}+C\tnorm{{\bf E}_{h}(0)}^{2}\nonumber\\
			&&+C\int_{0}^{t}\norm{{\bf F}}^{2}ds+C\int_{0}^{t}\norm{\rho_{s}}^{2}ds+C\int_{0}^{t}\norm{ P_{h}}^{2}ds+C\norm{{\bf F}}^{2}.
		\end{eqnarray*}
		Simplifying the above inequality, we obtain
		\begin{eqnarray*}
			\norm{P_{h}}^{2}
			&\le&C\norm{{\bf E}_{tt}(0)}^{2}+C\tnorm{{\bf E}_{ht}(0)}_{1}^{2}+C\tnorm{{\bf E}_{h}(0)}^{2}+C\norm{P_{h}(0)}^{2}\nonumber\\
			&&+C\max_{t\in[0,T]}\norm{{\bf F}(t)}^{2}+2\norm{\rho_{tt}(t)}^{2}+\frac{1}{2}\norm{P_{h}(t)}^{2}+C\norm{\rho_{tt}(0)}^{2}\nonumber\\
			&&+C\int_{0}^{t}\norm{\rho_{s}}^{2}ds+C\int_{0}^{t}\norm{\rho_{ss}}^{2}ds+C\int_{0}^{t}\norm{\rho_{sss}}^{2}ds\nonumber\\
			&&+C\int_{0}^{t}\norm{{\bf F}}^{2}ds+C\int_{0}^{t}\norm{{\bf F}_{s}}^{2}ds+C\int_{0}^{t}\norm{ P_{h}}^{2}ds.
		\end{eqnarray*}
		Again by Gronwall's inequality 1 (cf. Lemma \ref{g1}), we derive
		\begin{eqnarray}\label{st5}
			\norm{P_{h}}^{2}
			&\le&C\norm{{\bf E}_{tt}(0)}^{2}+C\tnorm{{\bf E}_{ht}(0)}_{1}^{2}+C\tnorm{{\bf E}_{h}(0)}^{2}\nonumber\\
			&&+C\norm{P_{h}(0)}^{2}+C\max_{t\in[0,T]}\norm{{\bf F}(t)}^{2}+C\max_{t\in[0,T]}\norm{\rho_{tt}(t)}^{2}\nonumber\\
			&&+C\int_{0}^{t}\norm{\rho_{s}}^{2}ds+C\int_{0}^{t}\norm{\rho_{ss}}^{2}ds+C\int_{0}^{t}\norm{\rho_{sss}}^{2}ds\nonumber\\
			&&+C\int_{0}^{t}\norm{{\bf F}}^{2}ds+C\int_{0}^{t}\norm{{\bf F}_{s}}^{2}ds.
		\end{eqnarray}
		It is easy to see from triangle inequality, Lemma \ref{ritzestimate} and \eqref{apr2} that
		\begin{eqnarray}\label{st6}
			\tnorm{{\bf E}_{h}(0)}^{2}&\le&	\tnorm{{\bf E}_{h}(0)}_{1}^{2}\nonumber\\
			&\le&C\tnorm{\mathbb{R}_{h}{\bf E}(0)-{\bf E}(0)}_{1}^{2}+C\tnorm{{\bf E}(0)}_{1}^{2}\nonumber\\
			&\le&C(\norm{{\bf u}}_{2}^{2}+\norm{P(0)}_{1}^{2})\nonumber\\
			&\le&C\left(\norm{{\bf u}}_{2}^{2}+\norm{{\bf F}(0)}_{1}^{2}+\norm{\rho_{tt}(0)^{2}}\right).
		\end{eqnarray}
		In the same manner, from \eqref{apr1} we have
		\begin{eqnarray}\label{st7}
			\tnorm{{\bf E}_{ht}(0)}_{1}^{2}
			&\le&C\tnorm{\mathbb{R}_{h}{\bf E}_{t}(0)-{\bf E}_{t}(0)}_{1}^{2}+C\tnorm{{\bf E}_{t}(0)}_{1}^{2}\nonumber\\
			&\le&C(\norm{{\bf v}}_{2}^{2}+\norm{P_{t}(0)}_{1}^{2})\nonumber\\
			&\le&C\left(\norm{{\bf v}}_{2}^{2}+\norm{{\bf F}_{t}(0)}_{1}^{2}+\norm{\rho_{ttt}(0)^{2}}\right),
		\end{eqnarray}
		and,
		\begin{eqnarray}\label{st8}
			\norm{P_{h}(0)}^{2}&\le&C\norm{\mathbb{E}_{h}P(0)-P(0)}^{2}+C\norm{P(0)}^{2}\nonumber\\
			&\le&C(\norm{{\bf u}}_{2}^{2}+\norm{P(0)}_{1}^{2})\nonumber\\
			&\le&C\left(\norm{{\bf u}}_{2}^{2}+\norm{{\bf F}(0)}_{1}^{2}+\norm{\rho_{tt}(0)^{2}}\right).
		\end{eqnarray}
		Further, a bound for $\norm{{\bf E}_{tt}(0)}$ is obtained by setting $t\to0^{+}$ in the first equation of model \eqref{model3} and later by using \eqref{apr2}, we derive
		\begin{eqnarray}\label{st9}
			\norm{{\bf E}_{tt}(0)}^{2}&=&\norm{{\bf F}(0)+\epsilon\nabla P(0)-\nabla\times(\alpha\nabla\times{\bf E}(0))}^{2}\nonumber\\
			&\le&C\norm{{\bf F}(0)}^{2}+C\norm{P(0)}_{2}^{2}+\norm{{\bf u}}_{2}^{2}\nonumber\\
			&\le&C\norm{{\bf F}(0)}_{1}^{2}+\norm{\rho_{tt}(0)}^{2}+\norm{{\bf u}}_{2}^{2}.
		\end{eqnarray}
		Plugging in the bounds \eqref{st6}-\eqref{st9} in \eqref{st5} we achieve the stability bound (cf. \eqref{stab2}) for the discrete potential approximation. Then using \eqref{st6}, \eqref{st7} and \eqref{stab2} in \eqref{st2} we obtain the desired stability estimate (cf.  \eqref{stab1}) for the discrete electric field.
		
	\end{proof}
	\subsection{\normalsize Error analysis of the semi-discrete scheme}	
	The exact errors arising due to numerical approximations by the semi-discrete parameter-free dG scheme can be split into approximation and projected errors, respectively, given as
	\begin{eqnarray*}
		{\bf E}-{\bf E}_{h}&=&{{\bf E}-{\bf T}_h{\bf E}}+{\bf T}_h{\bf E}-{\bf E}_h,\nonumber\\
		P-P_{h}&=&P-L_{h}P+L_{h}P-P_{h}.
	\end{eqnarray*}
	Estimates for approximation error follow from Lemma \ref{approximation}, hence our objective reduces to find bounds for the projected error due to electric field and the potential function which are denoted by ${\bf e}_{h}$ and $\tilde{e}_h$, respectively, and are given as
	\begin{eqnarray*}
		{\bf e}_{h}: = {\bf T}_h{\bf E}-{\bf E}_h,\quad\mathrm{and}\quad
		\tilde{e}_{h}: = L_h P-P_h.
	\end{eqnarray*}
	For the purpose of error analysis in discrete energy norm, let ${\bf E}$ be the exact solution of \eqref{model3}, we introduce some bilinear forms for each $(\textbf{v}_h,q_{h})\in{\bf V}_h\times Q_{h}$, described as follows:
	\begin{eqnarray}\label{bilinearforms}
		\left\{
		\begin{array}{ll}
			\mathcal{M}_1({\bf E},{\bf v}_{h}):=\sum_{K\in\mathcal{K}_{h}}\langle {\bf n}\times(\avg{{\bf T}_h{\bf E}}-{\bf E}),\alpha\nabla_{w}\times{\bf v}_h\rangle_{\partial K},\nonumber\\
			\mathcal{M}_2({\bf E},{\bf v}_h):=\sum_{K\in\mathcal{K}_{h}}\langle{\bf n}\times({\bf v}_h-\avg{{\bf v}_{h}}),\alpha\nabla\times{\bf E}-\mathcal{T}_h(\alpha\nabla\times{\bf E})\rangle_{\partial K},\nonumber\\
			\mathcal{M}_3(P,{\bf v}_{h}):=\sum_{K\in\mathcal{K}_{h}}\langle\epsilon\left(P-\avg{L_{h}P}\right),{\bf n}\cdot{\bf v}_{h}\rangle_{\partial K},\nonumber\\
			\mathcal{M}_4({\bf E},q_{h}):=\sum_{K\in\mathcal{K}_{h}}\langle{\bf n}\cdot\left({\bf E}-{\bf T}_{h}{\bf E}\right),q_{h}-\avg{q_{h}}\rangle_{\partial K}.\nonumber
		\end{array}
		\right.
	\end{eqnarray}
	\begin{lemma}\label{semiderrorequation}
		Let \(({\bf E},P)\) be the exact solution of model \eqref{model3}. Then the following error equation holds for arbitrary
		\(({\bf v}_{h},q_h)\in {\bf V}_{h}^{0}\times Q_{h}^{0}\):
		\begin{eqnarray}\label{sderrorequation}
			\left\{
			\begin{array}{ll}
				(\epsilon{\bf e}_{htt},{\bf v}_h)+A({\bf e}_h,{\bf v}_h)-B({\bf v}_{h},\tilde{e}_h)=\sum_{i=1}^{2}\mathcal{M}_i({\bf E},{\bf v}_{h})+\mathcal{M}_3(P,{\bf v}_{h})+\mathcal{S}({\bf T}_h{\bf E},{\bf v}_h).\\
				B({\bf e}_{h},q_{h}) = \mathcal{M}_4({\bf E},q_{h}).
			\end{array}
			\right.
		\end{eqnarray}
	\end{lemma}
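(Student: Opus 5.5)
The plan is to derive both identities by testing the exact equations \eqref{model3} elementwise against discrete functions, rewriting every term through the definitions \eqref{modweakcurl}--\eqref{modweakgrad} and Lemma \ref{Techres2lem}, and then subtracting the scheme \eqref{sdalg1}--\eqref{sdalg2}.

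\emph{First equation.} Take ${\bf v}_h\in{\bf V}_h^0$ and multiply the first equation of \eqref{model3} by ${\bf v}_h$ on each $K$. Since ${\bf T}_h$ is the ${\bf L}^2$ projection onto ${\bf V}_h^0$, we have $(\epsilon{\bf E}_{tt},{\bf v}_h)=(\epsilon({\bf T}_h{\bf E})_{tt},{\bf v}_h)$.

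For the curl-curl term, integrate by parts on each $K$:
\[
(\nabla\times(\alpha\nabla\times{\bf E}),{\bf v}_h)_K=(\alpha\nabla\times{\bf E},\nabla\times{\bf v}_h)_K+\langle {\bf n}\times{\bf v}_h,\alpha\nabla\times{\bf E}\rangle_{\partial K}.
\]
The orientation sign is fixed by the convention in \eqref{modweakcurl}. Next, apply \eqref{modweakcurl} with $w=\mathcal{T}_h(\alpha\nabla\times{\bf E})$, so that $(\mathcal{T}_h(\alpha\nabla\times{\bf E}),\nabla_w\times{\bf v}_h)_K$ equals the volume term plus $\langle{\bf n}\times\avg{{\bf v}_h},\mathcal{T}_h(\alpha\nabla\times{\bf E})\rangle_{\partial K}$. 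Because $\nabla\times{\bf v}_h\in[P_{k-1}]^{2d-3}$, the volume term $(\alpha\nabla\times{\bf E},\nabla\times{\bf v}_h)_K$ may be replaced by $(\mathcal{T}_h(\alpha\nabla\times{\bf E}),\nabla\times{\bf v}_h)_K$.

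Now sum over $K$. The quantity $\alpha\nabla\times{\bf E}$ is single-valued on interior faces, and ${\bf n}\times\avg{{\bf v}_h}$ cancels between neighbours; on $\partial\Omega$ it vanishes because ${\bf v}_h\in{\bf V}_h^0$. Hence $\sum_K\langle{\bf n}\times\avg{{\bf v}_h},\alpha\nabla\times{\bf E}\rangle_{\partial K}=0$, and it can be inserted freely. This gives
\[
\sum_K(\nabla\times(\alpha\nabla\times{\bf E}),{\bf v}_h)_K=\sum_K(\mathcal{T}_h(\alpha\nabla\times{\bf E}),\nabla_w\times{\bf v}_h)_K+\mathcal{M}_2({\bf E},{\bf v}_h).
\]
Lemma \ref{Techres2lem} with $\xi=\nabla_w\times{\bf v}_h$ then turns the first sum into $\sum_K(\alpha\nabla_w\times{\bf T}_h{\bf E},\nabla_w\times{\bf v}_h)_K-\mathcal{M}_1({\bf E},{\bf v}_h)$, which equals $A({\bf T}_h{\bf E},{\bf v}_h)-\mathcal{S}({\bf T}_h{\bf E},{\bf v}_h)-\mathcal{M}_1({\bf E},{\bf v}_h)$. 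The signs in front of $\mathcal{M}_1$ and $\mathcal{S}$ are bookkeeping to be reconciled with the stated form.

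For the pressure term, integrate by parts: $-(\epsilon\nabla P,{\bf v}_h)_K=(\epsilon P,\nabla\cdot{\bf v}_h)_K-\langle\epsilon P,{\bf n}\cdot{\bf v}_h\rangle_{\partial K}$. Replace $P$ by $L_hP$ in the volume term, which is legitimate since $\nabla\cdot{\bf v}_h\in P_{k-1}$. Then use \eqref{modweakgrad} with ${\bf r}={\bf v}_h$ to obtain $-B({\bf v}_h,L_hP)$ plus $\langle\epsilon(\avg{L_hP}-P),{\bf n}\cdot{\bf v}_h\rangle_{\partial K}$, i.e.\ $-\mathcal{M}_3$ up to sign.

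Subtracting \eqref{sdalg1} then yields the first identity for ${\bf e}_h$ and $\tilde e_h=L_hP-P_h$.

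\emph{Second equation.} For $q_h\in Q_h^0$, apply \eqref{modweakgrad} with ${\bf r}={\bf T}_h{\bf E}$:
\[
B({\bf T}_h{\bf E},q_h)=\sum_K\left[-(q_h,\nabla\cdot\epsilon{\bf T}_h{\bf E})_K+\langle\avg{q_h},{\bf n}\cdot\epsilon{\bf T}_h{\bf E}\rangle_{\partial K}\right].
\]
Integrate the volume term by parts back to $(\nabla q_h,\epsilon{\bf T}_h{\bf E})_K-\langle q_h,{\bf n}\cdot\epsilon{\bf T}_h{\bf E}\rangle_{\partial K}$. Since $\nabla q_h\in{\bf P}_{k-2}$, ${\bf T}_h{\bf E}$ can be replaced by ${\bf E}$ there. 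Undoing the integration by parts with ${\bf E}$ gives $-(q_h,\nabla\cdot\epsilon{\bf E})_K+\langle q_h,{\bf n}\cdot\epsilon{\bf E}\rangle_{\partial K}$. Using $\nabla\cdot(\epsilon{\bf E})=\rho$ together with $\sum_K\langle\avg{q_h},{\bf n}\cdot{\bf E}\rangle_{\partial K}=0$, the remainder collects into $\mathcal{M}_4({\bf E},q_h)$, up to the factor $\epsilon$ (taken as $1$ here). The boundary-face contributions vanish because ${\bf n}\cdot{\bf E}$ is single-valued on interior faces and $q_h=0$ on $\partial\Omega$. Subtracting \eqref{sdalg2} gives the second identity.

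\emph{Main obstacle.} The delicate step is the face bookkeeping: verifying that the inserted zero sums genuinely vanish (continuity of the exact traces, ${\bf V}_h^0$ and $Q_h^0$ boundary conditions, and averages being single-valued), and matching signs so that exactly $\mathcal{M}_1,\dots,\mathcal{M}_4$ and $+\mathcal{S}({\bf T}_h{\bf E},{\bf v}_h)$ appear. This is where I would be most careful. In the curl part in particular, the sign of $\mathcal{S}$ arises from $A$ containing $\mathcal{S}$ while the exact operator does not, so $A({\bf T}_h{\bf E},{\bf v}_h)$ carries an extra $\mathcal{S}({\bf T}_h{\bf E},{\bf v}_h)$ that must be moved to the right-hand side.
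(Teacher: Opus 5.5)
Your route is the paper's: you test \eqref{model3} elementwise and insert the vanishing sum $\sum_{K}\langle{\bf n}\times\avg{{\bf v}_h},\alpha\nabla\times{\bf E}\rangle_{\partial K}$. You then use \eqref{modweakcurl} with $w=\mathcal{T}_h(\alpha\nabla\times{\bf E})$, Lemma \ref{Techres2lem} with $\xi=\nabla_w\times{\bf v}_h$, and \eqref{modweakgrad} for the pressure, add $\mathcal{S}({\bf T}_h{\bf E},{\bf v}_h)$, and subtract the scheme. Your second equation is \eqref{erreqn6} run in the opposite direction, and it is correct. The one step that is actually wrong is the sign you deferred. It sits in $\mathcal{M}_2$, not in $\mathcal{M}_1$ or $\mathcal{S}$, which you have right.

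The orientation fixed by \eqref{modweakcurl} is $(\nabla\times{\bf v},w)_K=({\bf v},\nabla\times w)_K+\langle{\bf n}\times{\bf v},w\rangle_{\partial K}$. So the elementwise Green formula carries a minus sign:
\[
(\nabla\times(\alpha\nabla\times{\bf E}),{\bf v}_h)_K=(\alpha\nabla\times{\bf E},\nabla\times{\bf v}_h)_K-\langle{\bf n}\times{\bf v}_h,\alpha\nabla\times{\bf E}\rangle_{\partial K}.
\]
Meanwhile, \eqref{modweakcurl} plus one integration by parts gives $(\nabla\times{\bf v}_h,w)_K=(\nabla_w\times{\bf v}_h,w)_K+\langle{\bf n}\times({\bf v}_h-\avg{{\bf v}_h}),w\rangle_{\partial K}$, which is \eqref{erreqn2}. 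Only with these consistent signs do the two face terms merge into the projection error $\alpha\nabla\times{\bf E}-\mathcal{T}_h(\alpha\nabla\times{\bf E})$. The outcome is
\[
\sum_{K\in\mathcal{K}_h}(\nabla\times(\alpha\nabla\times{\bf E}),{\bf v}_h)_K=\sum_{K\in\mathcal{K}_h}(\mathcal{T}_h(\alpha\nabla\times{\bf E}),\nabla_w\times{\bf v}_h)_K-\mathcal{M}_2({\bf E},{\bf v}_h).
\]
Your intermediate identity with $+\mathcal{M}_2$ is therefore false. Carried to the end, it would put $-\mathcal{M}_2$ on the right-hand side, contradicting the statement.

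With that corrected, nothing is left to reconcile. The exact equation reads
\[
\begin{aligned}
({\bf F},{\bf v}_h)={}&(\epsilon{\bf T}_h{\bf E}_{tt},{\bf v}_h)+A({\bf T}_h{\bf E},{\bf v}_h)-\mathcal{S}({\bf T}_h{\bf E},{\bf v}_h)-B({\bf v}_h,L_hP)\\
&-\mathcal{M}_1({\bf E},{\bf v}_h)-\mathcal{M}_2({\bf E},{\bf v}_h)-\mathcal{M}_3(P,{\bf v}_h),
\end{aligned}
\]
which is \eqref{erreqn5} written with $A$. Subtracting \eqref{sdalg1} gives exactly $\mathcal{M}_1+\mathcal{M}_2+\mathcal{M}_3+\mathcal{S}$ on the right. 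Your pressure computation already yields exactly $-\mathcal{M}_3$, not merely ``up to sign''. The factor $\epsilon$ in front of $\mathcal{M}_4$ also appears in the paper's own computation \eqref{erreqn6}, so it is an imprecision of the statement, not of your argument.

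One caveat you share with the paper's \eqref{erreqn4}. Replacing $P$ by $L_hP$ ``since $\nabla\cdot{\bf v}_h\in P_{k-1}$'' requires $L_h$ to be the elementwise projection onto $P_{k-1}(K)$. For the projection onto $Q_h^0$ as defined, $\nabla\cdot{\bf v}_h$ need not vanish on boundary faces, so the orthogonality can fail on boundary elements. The analogous step in your second equation is fine even for ${\bf T}_h$ onto ${\bf V}_h^0$. There $q_h|_e=0$ forces $\boldsymbol{\eta}\times\nabla q_h=0$ on boundary faces, so $\nabla q_h\in{\bf V}_h^0$.
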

	\begin{proof}
		We begin the proof by testing an arbitrary  ${\bf v}_{h}\in {\bf V}_{h}^{0}$ in the first equation of model \eqref{model3} and later using integration by parts to arrive at
		\begin{eqnarray}
			({\bf F},{\bf v}_h)&=&(\epsilon{\bf E}_{tt},{\bf v}_h)+(\nabla\times(\alpha\nabla\times{\bf E}),{\bf v}_h)-(\epsilon\nabla P,{\bf v}_h)\nonumber\\
			&=&(\epsilon{\bf E}_{tt},\textbf{v}_h)+\sum_{K\in\mathcal{K}_h}(\nabla\times(\alpha\nabla\times{\bf E}),\textbf{v}_h)_{K}-\sum_{K\in\mathcal{K}_h}(\epsilon\nabla P,{\bf v}_h)_{K}\nonumber\\
			&=&(\epsilon{\bf E}_{tt},\textbf{v}_h)+\sum_{K\in\mathcal{K}_h}(\alpha\nabla\times{\bf E},\nabla\times{\bf v}_h)_K\nonumber\\
			&&+\sum_{K\in\mathcal{K}_h}\langle\textbf{n}\times(\alpha\nabla\times{\bf E}),{\bf v}_h-\avg{{\bf v}_{h}}\rangle_{\partial K}-\sum_{K\in\mathcal{K}_h}(\epsilon\nabla P,{\bf v}_h)_{K}.\label{erreqn1}~~~~~~~~~~
		\end{eqnarray}
		Here in the last equality we have used the continuity of flux to establish the fact
		$$\sum_{K\in\mathcal{K}_h}\langle\textbf{n}\times(\alpha\nabla\times{\bf E}),\avg{{\bf v}_{h}}\rangle_{\partial K}=0.$$
		Utilizing the definition of discrete modified weak curl \eqref{modweakcurl}, integration by parts and the properties of ${\bf L}^{2}$-projection $\mathcal{T}_{h}$, we can derive
		\begin{eqnarray}
			(\mathcal{T}_h(\alpha\nabla\times{\bf E}),\nabla_{w}\times{\bf v}_h)_{K}&=&({\bf v}_h,\nabla\times\mathcal{T}_h(\alpha\nabla\times{\bf E}))_{K}\nonumber\\
			&&+\langle\textbf{n}\times\avg{{\bf v}_h},\mathcal{T}_h(\alpha\nabla\times{\bf E})\rangle_{\partial K}\nonumber\\
			&=&(\nabla\times{\bf v}_h,\mathcal{T}_h(\alpha\nabla\times{\bf E}))_{K}\nonumber\\
			&&-\langle\textbf{n}\times({\bf v}_h
			-\avg{{\bf v}_h}),\mathcal{T}_h(\alpha\nabla\times{\bf E})\rangle_{\partial K}\nonumber\\
			&=&(\nabla\times{\bf v}_h,\alpha\nabla\times{\bf E})_{K}\nonumber\\
			&&-\langle{\bf n}\times({\bf v}_h
			-\avg{{\bf v}_{h}}),\mathcal{T}_h(\alpha\nabla\times{\bf E})\rangle_{\partial K}.~~~~~~
			\label{erreqn2}
		\end{eqnarray}
		Again, by Lemma \ref{Techres2lem} and \eqref{erreqn2}, we can write
		\begin{eqnarray}
			(\nabla\times{\bf v}_h,\alpha\nabla\times{\bf E})_{K} &=&(\alpha\nabla_{w}\times ({\bf T}_h{\bf E}),\nabla_{w}\times{\bf v}_h)_{K}\nonumber\\
			&&-\langle {\bf n}\times(\avg{{\bf T}_h{\bf E}}-{\bf E}),\alpha\nabla_{w}\times{\bf v}_h\rangle_{\partial K}
			\nonumber\\
			&&+\langle{\bf n}\times({\bf v}_h-\avg{{\bf v}_{h}}),\mathcal{T}_h(\alpha\nabla\times{\bf E})\rangle_{\partial K}.\label{erreqn3}
		\end{eqnarray}
		Applying integration by parts, properties of $L^{2}$-projection $L_{h}$ and using definition of discrete modified weak gradient \eqref{modweakgrad}, we deduce
		\begin{eqnarray}
			(\epsilon\nabla P,{\bf v}_{h})_{K}&=&-(P,\nabla\cdot({\epsilon\bf v}_{h}))_{K}+\langle P,{\bf n}\cdot(\epsilon{\bf v}_{h})\rangle_{\partial K}\nonumber\\
			&=&-(L_{h}P,\nabla\cdot({\epsilon\bf v}_{h}))_{K}+\langle P,{\bf n}\cdot(\epsilon{\bf v}_{h})\rangle_{\partial K}\nonumber\\
			&=&(\epsilon\nabla_{w}L_{h}P,{\bf v}_{h})_{K}-\langle\avg{L_{h}P}- P,{\bf n}\cdot(\epsilon{\bf v}_{h})\rangle_{\partial K}.\label{erreqn4}
		\end{eqnarray}
		Assembling \eqref{erreqn3}-\eqref{erreqn4} in \eqref{erreqn1} we achieve
		\begin{eqnarray}
			({\bf F},{\bf v}_h)&=&(\epsilon{\bf T}_{h}{\bf E}_{tt},\textbf{v}_h)+\sum_{K\in\mathcal{K}_{h}}(\alpha\nabla_{w}\times ({\bf T}_h{\bf E}),\nabla_{w}\times{\bf v}_h)_{K}\nonumber\\
			&&-\sum_{K\in\mathcal{K}_{h}}\langle {\bf n}\times(\avg{{\bf T}_h{\bf E}}-{\bf E}),\alpha\nabla_{w}\times{\bf v}_h\rangle_{\partial K}
			\nonumber\\
			&&+\sum_{K\in\mathcal{K}_{h}}\langle{\bf n}\times({\bf v}_h-\avg{{\bf v}_{h}}),\mathcal{T}_h(\alpha\nabla\times{\bf E})-\alpha\nabla\times{\bf E}\rangle_{\partial K}\nonumber\\
			&&-\sum_{K\in\mathcal{K}_{h}}(\epsilon\nabla_{w}L_{h}P,{\bf v}_{h})_{K}\nonumber\\
			&&+\sum_{K\in\mathcal{K}_{h}}\langle\avg{L_{h}P}- P,{\bf n}\cdot(\epsilon{\bf v}_{h})\rangle_{\partial K}.\label{erreqn5}
		\end{eqnarray}
		Adding $\mathcal{S}({\bf T}_h{\bf E},{\bf v}_{h})$ on both sides of \eqref{erreqn5} and later subtracting \eqref{sdalg1} from \eqref{erreqn5}, we get the first semi-discrete error equation of \eqref{sderrorequation}.
		
		Now, we test the second equation of \eqref{model3} with \( q_{h} \in Q_{h}^{0} \). Further, we apply Green's formula and then use property of the \( \mathbf{L}^{2} \)-projection \( \mathbf{T}_{h} \) along with the definition of discrete modified weak gradient \eqref{modweakgrad}. This sequence of steps leads to
		\begin{eqnarray}
			(\rho,q_{h}) & = & (\nabla\cdot (\epsilon{\bf E}),q_{h})\nonumber\\
			&=&\sum_{K\in\mathcal{K}_{h}}(\nabla\cdot (\epsilon{\bf E}),q_{h})_{K}\nonumber\\
			&=&-\sum_{K\in\mathcal{K}_{h}}(\epsilon{\bf E},\nabla q_{h})_{K} + \sum_{K\in\mathcal{K}_{h}}\langle{\bf n}\cdot(\epsilon{\bf E}),q_{h}\rangle_{\partial K}\nonumber\\
			&=&-\sum_{K\in\mathcal{K}_{h}}(\epsilon{\bf T}_{h}{\bf E},\nabla q_{h})_{K} + \sum_{K\in\mathcal{K}_{h}}\langle{\bf n}\cdot(\epsilon{\bf E}),q_{h}\rangle_{\partial K}\nonumber\\
			&=&\sum_{K\in\mathcal{K}_{h}}(\epsilon\nabla\cdot({\bf T}_{h}{\bf E}),q_{h})_{K} + \sum_{K\in\mathcal{K}_{h}}\langle{\bf n}\cdot\left({\bf E}-{\bf T}_{h}{\bf E}\right),\epsilon q_{h}\rangle_{\partial K}\nonumber\\
			&=&-\sum_{K\in\mathcal{K}_{h}}(\epsilon{\bf T}_{h}{\bf E},\nabla_{w}q_{h})_{K} +\sum_{K\in\mathcal{K}_{h}}\langle{\bf n}\cdot{\bf T}_{h}{\bf E},\epsilon\avg{q_{h}}\rangle_{\partial K}\nonumber\\
			&&+ \sum_{K\in\mathcal{K}_{h}}\langle{\bf n}\cdot\left({\bf E}-{\bf T}_{h}{\bf E}\right),\epsilon q_{h}\rangle_{\partial K}\nonumber\\
			&=&-\sum_{K\in\mathcal{K}_{h}}(\epsilon{\bf T}_{h}{\bf E},\nabla_{w}q_{h})_{K} +\sum_{K\in\mathcal{K}_{h}}\langle{\bf n}\cdot\left({\bf T}_{h}{\bf E}-{\bf E}\right),\epsilon\avg{q_{h}}\rangle_{\partial K}\nonumber\\
			&&+ \sum_{K\in\mathcal{K}_{h}}\langle{\bf n}\cdot\left({\bf E}-{\bf T}_{h}{\bf E}\right),\epsilon q_{h}\rangle_{\partial K}.\label{erreqn6}
		\end{eqnarray}
		Since $q\in Q_{h}^{0}$, we have exploited this fact in the last equality of \eqref{erreqn6} to have
		\begin{equation*}
			\sum_{K\in\mathcal{K}_{h}}\langle{\bf n}\cdot{\bf E},\epsilon\avg{q_{h}}\rangle_{\partial K}=0.
		\end{equation*}
		Subtracting \eqref{sdalg2} from \eqref{erreqn6}, yields the desired second error equation in \eqref{sderrorequation}.
	\end{proof}
	\begin{lemma}[cf. Lemma 4.11, \cite{mohapatra2025new}]\label{bounds}
		Assume that the true solution of model \eqref{model3},
		\[
		({\bf E}(t),P(t))\in 
		\bigl({\bf H}^{1+k}(\Omega)\cap {\bf H}_0(\mathrm{curl};\Omega)\bigr)
		\times
		\bigl(H^{k}(\Omega)\cap H^{1}_{0}(\Omega)\bigr).
		\]
		Then, for arbitrary \(({\bf v},q_h)\in {\bf V}_{h}^{0}\times Q_{h}^{0}\), the following bounds for the residuals hold,
		\begin{eqnarray*}
			|\mathcal{M}_l({\bf E},{\bf v}_h)|&\leq& Ch^{k}\norm{{\bf E}}_{1+k}\tnorm{{\bf v}_h},\;\;l=1,\;2,\\
			|\mathcal{M}_3(P,{\bf v}_h)|&\leq& Ch^{k}\norm{P}_{k}\tnorm{{\bf v}_h},\\
			|\mathcal{M}_4({\bf E},q_h)|&\leq& Ch^{1+k}\norm{{\bf E}}_{1+k}\tnorm{q_{h}}_{0},\\
			|\mathcal{S}({\bf T}_h{\bf E},{\bf v}_h)|&\leq& Ch^{k}\norm{{\bf E}}_{1+k}\tnorm{{\bf v}_h}.
		\end{eqnarray*}
	\end{lemma}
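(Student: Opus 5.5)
We bound each residual face by face: Cauchy--Schwarz on each $\partial K$, then the trace inequality \eqref{trace} together with the projection estimates of Lemma \ref{approximation} for the approximation-error factor. The discrete factor is controlled by $\tnorm{{\bf v}_h}$ or $\tnorm{q_h}_0$, either directly through the jump terms in $\mathcal{S}$ or through the definition of the modified weak operators combined with inverse and trace estimates. Summing over $K$ and using quasi-uniformity ($h_K\simeq h$) gives the global bounds.

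For $\mathcal{M}_1$, the interior-face contribution of $\avg{{\bf T}_h{\bf E}}-{\bf E}$ is $\tfrac12$ of the difference of the two traces of ${\bf T}_h{\bf E}-{\bf E}$. Hence $\norm{\avg{{\bf T}_h{\bf E}}-{\bf E}}_{\partial K}$ is bounded by traces of ${\bf E}-{\bf T}_h{\bf E}$ on $K$ and its neighbours. By \eqref{trace} and Lemma \ref{approximation} with $m=0,1$, $s=k$, this is $\le Ch^{k+1/2}\norm{{\bf E}}_{1+k}$ after summation. On the boundary, ${\bf n}\times{\bf E}=0$ and ${\bf n}\times{\bf T}_h{\bf E}$ is treated the same way. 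The factor $\nabla_w\times{\bf v}_h$ is a polynomial, so the discrete trace inequality gives $\norm{\nabla_w\times{\bf v}_h}_{\partial K}\le Ch^{-1/2}\norm{\nabla_w\times{\bf v}_h}_K$. Multiplying the two estimates yields $Ch^{k}\norm{{\bf E}}_{1+k}\tnorm{{\bf v}_h}$, using $\alpha$ bounded below in $\tnorm{\cdot}$.

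For $\mathcal{M}_2$, write ${\bf n}\times({\bf v}_h-\avg{{\bf v}_h})=\pm\tfrac12\jump{{\bf v}_h}_{T,e}$ on interior faces; on boundary faces it vanishes since $\avg{{\bf v}_h}={\bf v}_h$. Then $\sum h^{-1}\norm{\jump{{\bf v}_h}_T}^2\le\tnorm{{\bf v}_h}^2$. The other factor satisfies $h^{1/2}\norm{\alpha\nabla\times{\bf E}-\mathcal{T}_h(\alpha\nabla\times{\bf E})}_{\partial K}\le Ch^{k}\norm{{\bf E}}_{1+k}$ by \eqref{trace} and the second estimate of Lemma \ref{approximation}.

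$\mathcal{M}_3$ is analogous, using the third estimate of Lemma \ref{approximation} with $s=k$. Since $P$ is single-valued, we rewrite $\sum_K\langle P-\avg{L_hP},{\bf n}\cdot{\bf v}_h\rangle_{\partial K}$ face by face as $\langle\avg{P-L_hP},\jump{{\bf v}_h}_{N,e}\rangle_e$ plus a term pairing the jump of $L_hP$ with $\avg{{\bf n}\cdot{\bf v}_h}$. The cleanest way to organise this is to subtract ${\bf n}\cdot\avg{{\bf v}_h}$, which is possible because $\sum_K\langle P-\avg{L_hP},{\bf n}\cdot\avg{{\bf v}_h}\rangle_{\partial K}=0$ (single-valued integrand times opposite normals; $P=0$ and $L_hP\in Q_h^0$ on $\partial\Omega$). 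This leaves normal jumps of ${\bf v}_h$ on interior faces, controlled by $\mathcal{S}$, paired with $h^{k-1/2}\norm{P}_k$.

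For $\mathcal{M}_4$, note $q_h-\avg{q_h}=\pm\tfrac12\jump{q_h}_e\cdot{\bf n}$ on interior faces and $0$ on the boundary. This gives $h^{1/2}\norm{{\bf E}-{\bf T}_h{\bf E}}_{\partial K}\cdot h^{-1/2}\norm{\jump{q_h}}$. The first factor is $\le Ch^{k+1}\norm{{\bf E}}_{1+k}$ and the second is bounded by $\tnorm{q_h}_0$ up to the factor $h^{1/2}\cdot h^{1/2}$ bookkeeping, yielding $Ch^{1+k}$.

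Finally, for $\mathcal{S}({\bf T}_h{\bf E},{\bf v}_h)$ we use that ${\bf E}\in{\bf H}^{1+k}$ has vanishing tangential and normal jumps in the interior and vanishing tangential trace on $\partial\Omega$. Thus $\jump{{\bf T}_h{\bf E}}=\jump{{\bf T}_h{\bf E}-{\bf E}}$, and Cauchy--Schwarz in $\mathcal{S}$ gives $\mathcal{S}({\bf T}_h{\bf E}-{\bf E},\cdot)^{1/2}\le Ch^{-1/2}h^{k+1/2}\norm{{\bf E}}_{1+k}$.

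I expect the main obstacle to be the careful handling of the average/jump identities in $\mathcal{M}_3$ (and, similarly, $\mathcal{M}_1$) so that only jump quantities appear in the discrete factor. The first thing I would try is the zero-sum trick with single-valued traces described above.
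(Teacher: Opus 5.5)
Your proposal is correct, and it is essentially the standard argument behind the cited Lemma~4.11; the paper itself gives no proof beyond the citation. The ingredients match: face-wise Cauchy--Schwarz with \eqref{trace} and Lemma~\ref{approximation}; rewriting ${\bf n}\times({\bf v}_h-\avg{{\bf v}_h})$, ${\bf n}\cdot({\bf v}_h-\avg{{\bf v}_h})$ and $q_h-\avg{q_h}$ as half-jumps controlled by $\mathcal{S}$ or $\tnorm{\cdot}_0$; the vanishing jumps of ${\bf E}$ for $\mathcal{S}({\bf T}_h{\bf E},{\bf v}_h)$; and $P=0$, $L_hP\in Q_h^0$ on $\partial\Omega$ to remove the boundary normal traces in $\mathcal{M}_3$ that $\tnorm{\cdot}$ does not control.

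There are two harmless slips. On an interior face, $\avg{{\bf T}_h{\bf E}}-{\bf E}$ is the half-\emph{sum} of the two traces of ${\bf T}_h{\bf E}-{\bf E}$, not the half-difference. In $\mathcal{M}_3$, no extra term involving a jump of $L_hP$ ever appears: $P-\avg{L_hP}$ is single-valued, so the face-wise rewrite gives $\sum_{e}\langle \epsilon\avg{P-L_hP},\jump{{\bf v}_h}_{N,e}\rangle_e$ over interior faces directly.
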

	\begin{thm}\label{SemHcurl}
		Under the regularity assumptions:
		\begin{eqnarray*}
			&&{\bf E}\in {\bf H}^{3}\!\left({\bf H}^{1+k}(\Omega)\cap {\bf H}_0(\mathrm{curl};\Omega)\right),\\
			&&P\in H^{2}\!\left(H^{k}(\Omega)\cap H^{1}_{0}(\Omega)\right),
		\end{eqnarray*}
		of the true solution of \eqref{model3}, the following error estimates hold:
		\begin{eqnarray}\label{semiest1}
			\norm{\epsilon^{\frac{1}{2}}{\bf e}_{ht}(t)}^{2}
			+\tnorm{{\bf e}_h(t)}^{2}
			\le
			C\,h^{2k}
			\left(
			\norm{{\bf E}}_{{\bf H}^{3}({\bf H}^{1+k}(\Omega))}^{2}
			+\norm{P}_{H^{2}(H^{k}(\Omega))}^{2}
			\right),
		\end{eqnarray}
		and
		\begin{eqnarray}\label{semiest2}
			\norm{\tilde{e}_{h}(t)}^{2}
			\le
			C\,h^{2k}
			\left(
			\norm{{\bf E}}_{{\bf H}^{3}({\bf H}^{1+k}(\Omega))}^{2}
			+\norm{P}_{H^{2}(H^{k}(\Omega))}^{2}
			\right).
		\end{eqnarray}
	\end{thm}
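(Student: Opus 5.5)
The argument is an energy method on the error equation of Lemma \ref{semiderrorequation}, run in parallel with the proof of the stability lemma. The residual terms are bounded through Lemma \ref{bounds}, and the potential error is controlled through the inf-sup condition of Lemma \ref{infsup}.

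\textbf{Step 1 (energy identity).} Differentiate the second error equation in time to get $B({\bf e}_{ht},q_h)=\mathcal{M}_4({\bf E}_t,q_h)$. Take ${\bf v}_h={\bf e}_{ht}$ in the first error equation and $q_h=\tilde e_h$ in the differentiated second one, and add. The $B$ terms cancel, leaving
\begin{equation*}
\frac12\frac{d}{dt}\left(\norm{\epsilon^{\frac12}{\bf e}_{ht}}^2+\tnorm{{\bf e}_h}^2\right)=\sum_{i=1}^2\mathcal{M}_i({\bf E},{\bf e}_{ht})+\mathcal{M}_3(P,{\bf e}_{ht})+\mathcal{S}({\bf T}_h{\bf E},{\bf e}_{ht})+\mathcal{M}_4({\bf E}_t,\tilde e_h).
\end{equation*}
The residuals are bilinear and the test function is ${\bf e}_{ht}$, whose $\tnorm{\cdot}$-norm is not controlled. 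So I integrate over $(0,t)$ and integrate by parts in time. For example,
\begin{equation*}
\int_0^t\mathcal{M}_1({\bf E},{\bf e}_{hs})\,ds=\mathcal{M}_1({\bf E}(t),{\bf e}_h(t))-\mathcal{M}_1({\bf E}(0),{\bf e}_h(0))-\int_0^t\mathcal{M}_1({\bf E}_s,{\bf e}_h)\,ds .
\end{equation*}
I then apply Lemma \ref{bounds} and Young's inequality. The endpoint terms at $t$ are absorbed into the left side, and what remains is $Ch^{2k}(\norm{{\bf E}}_{1+k}^2+\norm{{\bf E}_t}_{1+k}^2+\norm{P}_k^2+\norm{P_t}_k^2)$ plus $\int_0^t\tnorm{{\bf e}_h}^2$.

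For the $\mathcal{M}_4$ term, Lemma \ref{bounds} and Lemma \ref{energyinverse} give
\begin{equation*}
|\mathcal{M}_4({\bf E}_t,\tilde e_h)|\le Ch^{1+k}\norm{{\bf E}_t}_{1+k}\,h^{-1}\norm{\tilde e_h}=Ch^k\norm{{\bf E}_t}_{1+k}\norm{\tilde e_h},
\end{equation*}
so it contributes $Ch^{2k}\norm{{\bf E}_t}_{1+k}^2+\int_0^t\norm{\tilde e_h}^2$.

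The initial errors ${\bf e}_h(0)={\bf T}_h{\bf u}-\mathbb{R}_h{\bf u}$ and ${\bf e}_{ht}(0)={\bf T}_h{\bf v}-\mathbb{R}_h{\bf v}$ are $O(h^k)$ in $\tnorm{\cdot}_1$ by Lemma \ref{ritzestimate}.

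\textbf{Step 2 (potential error).} By Lemma \ref{infsup} there is ${\bf w}_h$ with $\norm{\tilde e_h}^2=B({\bf w}_h,\tilde e_h)$ and $\tnorm{{\bf w}_h}_1\le C\norm{\tilde e_h}$. Substituting this into the first error equation and using Lemma \ref{bounded} and Lemma \ref{bounds} gives
\begin{equation*}
\norm{\tilde e_h}\le C\left(\norm{{\bf e}_{htt}}+\tnorm{{\bf e}_h}+h^k(\norm{{\bf E}}_{1+k}+\norm{P}_k)\right).
\end{equation*}
This requires a bound on $\norm{{\bf e}_{htt}}$. To get it, I differentiate both error equations once more in time and repeat Step 1 with ${\bf v}_h={\bf e}_{htt}$ and $q_h=\tilde e_{ht}$; the cancellation of $B$ works as before. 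The residual terms again need integration by parts in time, which requires ${\bf E}_{ttt}$ and $P_{tt}$; this is why the theorem assumes ${\bf H}^3$ and $H^2$ in time. The $\mathcal{M}_4({\bf E}_{tt},\tilde e_{ht})$ term is integrated by parts in time to become $\mathcal{M}_4({\bf E}_{ttt},\tilde e_h)$ plus endpoint terms, each bounded by $Ch^k\norm{\cdot}_{1+k}\norm{\tilde e_h}$ as above. For the initial value, ${\bf e}_{htt}(0)={\bf T}_h{\bf E}_{tt}(0)-{\bf E}_{htt}(0)=0$ by Remark \ref{initialchoice}, and $\tnorm{{\bf e}_{ht}(0)}$ is $O(h^k)$ by Step 1.

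\textbf{Step 3 (closing).} Combine the two energy inequalities with the inf-sup bound for $\norm{\tilde e_h}^2$. The endpoint term $h^k\norm{{\bf E}_{tt}(t)}_{1+k}\norm{\tilde e_h(t)}$ is absorbed with a small Young's constant. The result is an integral inequality for
\begin{equation*}
X(t)=\norm{{\bf e}_{ht}}^2+\tnorm{{\bf e}_h}^2+\norm{{\bf e}_{htt}}^2+\tnorm{{\bf e}_{ht}}^2+\norm{\tilde e_h}^2
\end{equation*}
of the form $X(t)\le Ch^{2k}(\cdots)+C\int_0^tX$. Gronwall's inequality (Lemma \ref{g1}, or Lemma \ref{g2}) then gives \eqref{semiest1} and \eqref{semiest2}, with the time-Sobolev norms coming from sup-in-time bounds via the embedding $H^1(0,T)\subset C[0,T]$.

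The main obstacle is the coupling between $\tilde e_h$ and ${\bf e}_{htt}$. The potential error is controlled only through ${\bf e}_{htt}$, while the ${\bf e}_{htt}$ energy estimate contains $\tilde e_h$ through the $\mathcal{M}_4$ terms; these enter at the same order precisely because the extra factor $h$ in $\mathcal{M}_4$ cancels the inverse estimate of Lemma \ref{energyinverse}. I would handle this with careful Young's weighting at the endpoint terms and then a single Gronwall argument on the combined quantity $X(t)$.
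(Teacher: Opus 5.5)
Your proposal is correct and follows the paper's proof essentially step for step. It uses the same two energy identities: first testing with ${\bf e}_{ht},\tilde e_h$, then, after one more time differentiation, with ${\bf e}_{htt},\tilde e_{ht}$. It also shares the paper's other ingredients: integration by parts in time on the residuals, the inverse estimate of Lemma~\ref{energyinverse} to offset the extra power of $h$ in $\mathcal{M}_4$, ${\bf e}_{htt}(0)=0$ from Remark~\ref{initialchoice}, and the inf-sup bound $\norm{\tilde e_h}\le C(\norm{{\bf e}_{htt}}+\tnorm{{\bf e}_h}+h^k(\cdots))$.

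The only difference is organizational. You close with a single Gronwall argument on the combined quantity $X(t)$. The paper instead applies Gronwall in stages: first to the ${\bf e}_h$-energy, leaving $\int_0^t\norm{\tilde e_h}^2$; then the modified Gronwall to the ${\bf e}_{ht}$-energy; then once more after inserting the inf-sup bound. The two ways of closing are equivalent.
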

	\begin{proof}
		We start the proof by differentiating the second equation of \eqref{sderrorequation} with respect to $t$, to have 
		\begin{equation}\label{err0}
			B({\bf e}_{ht},q_{h}) = \mathcal{M}_4({\bf E}_{t},q_{h})\,\forall q_{h}\in Q_{h}^{0}.
		\end{equation}
		Now, substituting $q_{h}=\tilde{e}_{h}$ in above equation and adding with the first error equation of \eqref{sderrorequation} by selecting the test function ${\bf v}_{h}={\bf e}_{ht}$, we obtain
		\begin{eqnarray*}
			&&	\frac{1}{2}\frac{d}{dt}\left(\norm{\epsilon^{\frac{1}{2}}{\bf e}_{ht}}^{2}+\tnorm{{\bf e}_h}^{2}\right)
			\nonumber\\
			&&~~=\sum_{i=1}^{2}\mathcal{M}_i({\bf E},{\bf e}_{ht})+\mathcal{M}_3(P,{\bf e}_{ht})+\mathcal{S}({\bf T}_h{\bf E},{\bf e}_{ht})+
			\mathcal{M}_4({\bf E}_{t},\tilde{e}_{h}).\nonumber\\
		\end{eqnarray*}
		Integrating both sides of above equation from $0$ to $t\, (0< t\le T)$ with respect to $s$, we achieve
		\begin{eqnarray}\label{err1}
			&&\norm{\epsilon^{\frac{1}{2}}{\bf e}_{ht}(t)}^{2}+\tnorm{{\bf e}_h(t)}^{2}\nonumber\\
			&&~~\le\norm{\epsilon^{\frac{1}{2}}{\bf e}_{ht}(0)}^{2}+\tnorm{{\bf e}_h(0)}^{2}\nonumber\\
			&&~~~~+2\int_{0}^{t}\left(\sum_{i=1}^{2}\mathcal{M}_i({\bf E},{\bf e}_{hs})+\mathcal{M}_3(P,{\bf e}_{hs})+\mathcal{S}({\bf T}_h{\bf E},{\bf e}_{hs})+
			\mathcal{M}_4({\bf E}_{s},\tilde{e}_{h})\right)ds.~~~~~~~~~
		\end{eqnarray}
		Next, we proceed to find and estimate for each term on the right side of \eqref{err1}. 
		
		From Lemma \ref{ritzestimate}, it follows that
		\begin{eqnarray}\label{err2*}
			\norm{{\bf e}_{ht}(0)}&=&\norm{({\bf T}_{h}{\bf E}-{\bf E}_{h})_{t}(0)}\nonumber\\
			&=&\norm{{\bf T}_{h}{\bf E}_{t}(0)-{\bf E}_{ht}(0)}\nonumber\\
			&=&\norm{{\bf T}_{h}{\bf v}-\mathbb{R}_{h}{\bf v}}\nonumber\\
			&\le&Ch^{1+k}\left(\norm{{\bf v}}_{1+k}+\norm{P_{t}(0)}_{k}\right)\nonumber\\
			&\le&Ch^{1+k}\left(\norm{{\bf v}}_{1+k}+\max_{t\in [0,T]}\norm{P_{t}(t)}_{k}\right).
		\end{eqnarray}
		Arguing in the same way, we have
		\begin{eqnarray}\label{err3*}
			\tnorm{{\bf e}_{h}(0)}&\le&\tnorm{{\bf e}_{h}(0)}_{1}\nonumber\\
			&=&\tnorm{({\bf T}_{h}{\bf E}-{\bf E}_{h})(0)}_{1}\nonumber\\
			&=&\tnorm{{\bf T}_{h}{\bf E}(0)-{\bf E}_{h}(0)}_{1}\nonumber\\
			&=&\tnorm{{\bf T}_{h}{\bf u}-\mathbb{R}_{h}{\bf u}}_{1}\nonumber\\
			&\le&Ch^{k}\left(\norm{{\bf u}}_{1+k}+\norm{P(0)}_{k}\right)\nonumber\\
			&\le&Ch^{k}\left(\norm{{\bf u}}_{1+k}+\max_{t\in[0,T]}\norm{P(t)}_{k}\right).
		\end{eqnarray}
		Using integration by parts, Lemma \ref{bounds} and estimate \eqref{err3*}, for $i=1,2$, we  observe
		\begin{eqnarray}\label{err3}
			\left|\int_{0}^{t}\mathcal{M}_i({\bf E},{\bf e}_{hs})ds\right|&=&\left|\mathcal{M}_i({\bf E}(t),{\bf e}_{h}(t))-\mathcal{M}_i({\bf E}(0),{\bf e}_{h}(0))-\int_{0}^{t}\mathcal{M}_i({\bf E}_{s},{\bf e}_{h})ds \right|\nonumber\\
			&\le&\left|\mathcal{M}_i({\bf E}(t),{\bf e}_{h}(t))\right|+\left|\mathcal{M}_i({\bf E}(0),{\bf e}_{h}(0))\right|+\int_{0}^{t}|\mathcal{M}_i({\bf E}_{s},{\bf e}_{h})|ds\nonumber\\
			&\le&Ch^{k}\norm{{\bf E}}_{1+k}\tnorm{{\bf e}_{h}}+Ch^{k}\norm{{\bf u}}_{1+k}\tnorm{{\bf e}_{h}(0)}\nonumber\\
			&&+Ch^{k}\int_{0}^{t}\norm{{\bf E}_{s}}_{1+k}\tnorm{{\bf e}_{h}}ds\nonumber\\
			&\le&Ch^{k}\norm{{\bf E}}_{1+k}\tnorm{{\bf e}_{h}}+Ch^{2k}\norm{{\bf u}}_{1+k}(\norm{{\bf u}}_{1+k}+\max_{t\in[0,T]}\norm{P(t)}_{k})\nonumber\\
			&&+Ch^{k}\int_{0}^{t}\norm{{\bf E}_{s}}_{1+k}\tnorm{{\bf e}_{h}}ds.
		\end{eqnarray}
		Arguing similarly, we can derive the following bounds
		\begin{eqnarray}
			\left|\int_{0}^{t}\mathcal{M}_3(P,{\bf e}_{hs})ds\right|&\le& Ch^{k}\norm{P}_{k}\tnorm{{\bf e}_{h}}+Ch^{2k}\max_{t\in [0,T]}\norm{P(t)}_{k}(\norm{{\bf u}}_{1+k}+\max_{t\in[0,T]}\norm{P(t)}_{k})\nonumber\\
			&&+Ch^{k}\int_{0}^{t}\norm{P_{s}}_{k}\tnorm{{\bf e}_{h}}ds,\label{err4}\\
			\left|\int_{0}^{t}\mathcal{S}({\bf T}_h{\bf E},{\bf e}_{hs})ds\right| &\le& Ch^{k}\norm{{\bf E}}_{1+k}\tnorm{{\bf e}_{h}}+Ch^{2k}\norm{{\bf u}}_{1+k}(\norm{{\bf u}}_{1+k}+\max_{t\in[0,T]}\norm{P(t)}_{k})\nonumber\\
			&&+Ch^{k}\int_{0}^{t}\norm{{\bf E}_{s}}_{1+k}\tnorm{{\bf e}_{h}}ds,\label{err5}\\
			\left|\int_{0}^{t}\mathcal{M}_4({\bf E}_{s},\tilde{e}_{h})ds\right|&\le&
			Ch^{1+k}\int_{0}^{t}\norm{{\bf E}_{s}}_{1+k}\tnorm{{\tilde{e}_{h}}}_{0}ds\label{err6}.
		\end{eqnarray}
		Plugging the bounds \eqref{err2*}-\eqref{err6} in \eqref{err1}, we achieve
		\begin{eqnarray}\label{err7}
			&&\norm{\epsilon^{\frac{1}{2}}{\bf e}_{ht}(t)}^{2}+\tnorm{{\bf e}_h(t)}^{2}\nonumber\\
			&&~~\le Ch^{2(1+k)}\left(\norm{{\bf v}}_{1+k}^{2}+\max_{t\in[0,T]}\norm{P_{t}(t)}_{k}^{2}\right)+Ch^{2k}\left(\norm{{\bf u}}_{1+k}^{2}+\max_{t\in[0,T]}\norm{P(t)}_{k}^{2}\right)\nonumber\\
			&&~~~~+Ch^{k}\norm{{\bf E}}_{1+k}\tnorm{{\bf e}_{h}}+Ch^{k}\norm{P}_{k}\tnorm{{\bf e}_{h}}\nonumber\\
			&&~~~~+Ch^{k}\int_{0}^{t}\norm{{\bf E}_{s}}_{1+k}\tnorm{{\bf e}_{h}}ds+Ch^{k}\int_{0}^{t}\norm{P_{s}}_{k}\tnorm{{\bf e}_{h}}ds\nonumber\\
			&&~~~~+Ch^{1+k}\int_{0}^{t}\norm{{\bf E}_{s}}_{1+k}\tnorm{\tilde{e}_{h}}_{0}ds.
		\end{eqnarray}
		Applying Young's inequality and utilizing the inverse estimate in $\tnorm{\cdot}_{0}$-norm (cf. Lemma \ref{energyinverse}) in \eqref{err7} leads to
		\begin{eqnarray}\label{err8}
			&&\norm{\epsilon^{\frac{1}{2}}{\bf e}_{ht}(t)}^{2}+\frac{1}{2}\tnorm{{\bf e}_h(t)}^{2}\nonumber\\
			&&~~\le Ch^{2k}\left(\norm{{\bf u}}_{1+k}^{2}+\norm{{\bf v}}_{1+k}^{2}+\max_{t\in[0,T]}\norm{P(t)}_{k}^{2}\right.\nonumber\\
			&&~~~~~~~~~~~~~~\left.+\max_{t\in[0,T]}\norm{P_{t}(t)}_{k}^{2}+\max_{t\in[0,T]}\norm{{\bf E}(t)}^{2}_{1+k}+\max_{t\in[0,T]}\norm{P(t)}_{k}^{2}\right)\nonumber\\
			&&~~~~+Ch^{2k}\int_{0}^{t}\norm{{\bf E}_{s}}_{1+k}^{2}ds+Ch^{2k}\int_{0}^{t}\norm{P_{s}}_{k}^{2}ds+C\int_{0}^{t}\tnorm{{\bf e}_{h}}^{2}ds+C\int_{0}^{t}\norm{\tilde{e}_{h}}^{2}ds\nonumber\\
			&&~~\le Ch^{2k}\left(\norm{P}_{H^{2}(H^{k}(\Omega))}^{2}+\norm{{\bf E}}^{2}_{{\bf H}^{3}({\bf H}^{1+k}(\Omega))}\right)+C\int_{0}^{t}\norm{\tilde{e}_{h}}^{2}ds+C\int_{0}^{t}\tnorm{{\bf e}_{h}}^{2}ds.~~~~~~~~
		\end{eqnarray}
		From continuous Gronwall's inequality 1 (cf. Lemma \ref{g1}) and by standard Sobolev embedding results, we get
		\begin{eqnarray}\label{err88}
			&&\norm{\epsilon^{\frac{1}{2}}{\bf e}_{ht}(t)}^{2}+\tnorm{{\bf e}_h(t)}^{2}\nonumber\\
			&&~~\le Ch^{2k}\left(\norm{P}_{H^{2}(H^{k})}^{2}+\norm{{\bf E}}^{2}_{{\bf H}^{3}({\bf H}^{1+k})}\right)+C\int_{0}^{t}\norm{\tilde{e}_{h}}^{2}ds.
		\end{eqnarray}
		Now, we have to find an estimate for the term $\norm{\tilde{e}_{h}}$. For this we need some preparations. At first, we determine an estimate for the term $\norm{\epsilon^{\frac{1}{2}}{\bf e}_{htt}}$. We begin by differentiating the first error equation of \eqref{sderrorequation} and \eqref{err0} with respect to $t$, for any $({\bf v}_{h},q_{h}) \in {\bf V}_{h}^{0}\times Q_{h}^{0}$, to obtain 
		\begin{eqnarray*}
			\left\{
			\begin{array}{ll}
				(\epsilon{\bf e}_{httt},{\bf v}_h)+A({\bf e}_{ht},{\bf v}_h)-B({\bf v}_{h},\tilde{e}_{ht})=\sum_{i=1}^{2}\mathcal{M}_i({\bf E}_{t},{\bf v}_{h})+\mathcal{M}_3(P_{t},{\bf v}_{h})+\mathcal{S}({\bf T}_h{\bf E}_{t},{\bf v}_h),\\
				B({\bf e}_{htt},q_{h}) = \mathcal{M}_4({\bf E}_{tt},q_{h}).
			\end{array}
			\right.
		\end{eqnarray*}
		In the above system replacing $q_h=\tilde e_{ht}$ and choosing ${\bf v}_h={\bf e}_{htt}$, then later adding the resulting equations, we obtain
		\begin{eqnarray*}
			&&	\frac{1}{2}\frac{d}{dt}\left(\norm{\epsilon^{\frac{1}{2}}{\bf e}_{htt}}^{2}+\tnorm{{\bf e}_{ht}}^{2}\right)
			\nonumber\\
			&&~~=\sum_{i=1}^{2}\mathcal{M}_i({\bf E}_{t},{\bf e}_{htt})+\mathcal{M}_3(P_{t},{\bf e}_{htt})+\mathcal{S}({\bf T}_h{\bf E}_{t},{\bf e}_{htt})+
			\mathcal{M}_4({\bf E}_{tt},\tilde{e}_{ht}).
		\end{eqnarray*}
		Integrating both sides of above equation from $0$ to $t\, (0\,< t\le T)$ with respect to $s$, we achieve
		\begin{eqnarray}\label{err9}
			&&\norm{\epsilon^{\frac{1}{2}}{\bf e}_{htt}(t)}^{2}+\tnorm{{\bf e}_{ht}(t)}^{2}\nonumber\\
			&&~~\le\norm{\epsilon^{\frac{1}{2}}{\bf e}_{htt}(0)}^{2}+\tnorm{{\bf e}_{ht}(0)}^{2}\nonumber\\
			&&~~~~+2\int_{0}^{t}\left(\sum_{i=1}^{2}\mathcal{M}_i({\bf E}_{s},{\bf e}_{hss})+\mathcal{M}_3(P_{s},{\bf e}_{hss})+\mathcal{S}({\bf T}_h{\bf E}_{s},{\bf e}_{hss})+
			\mathcal{M}_4({\bf E}_{ss},\tilde{e}_{hs})\right)ds.~~~~~~~~~
		\end{eqnarray}
		From \eqref{initialsecond}, it is easy to see ${\bf e}_{htt}(0)=0$. Now,
		arguing as in derivation of \eqref{err2*}, then by Lemma \ref{bounds}, we deduce
		\begin{eqnarray}
			\tnorm{{\bf e}_{ht}(0)}&\le&Ch^{k}\left(\norm{{\bf v}}_{1+k}+\max_{t\in [0,T]}\norm{P_{t}(t)}_{k}\right).\label{err10}
		\end{eqnarray}
		Using integration by parts, Lemma \ref{bounds} and estimate \eqref{err10}, for $i=1,2$, we  observe
		\begin{eqnarray}\label{err11}
			\left|\int_{0}^{t}\mathcal{M}_i({\bf E}_{s},{\bf e}_{hss})ds\right|&=&\left|\mathcal{M}_i({\bf E}_{t}(t),{\bf e}_{ht}(t))-\mathcal{M}_i({\bf E}_{t}(0),{\bf e}_{ht}(0))-\int_{0}^{t}\mathcal{M}_i({\bf E}_{ss},{\bf e}_{hs})ds \right|\nonumber\\
			&\le&\left|\mathcal{M}_i({\bf E}_{t}(t),{\bf e}_{ht}(t))\right|+\left|\mathcal{M}_i({\bf E}_{t}(0),{\bf e}_{ht}(0))\right|+\int_{0}^{t}|\mathcal{M}_i({\bf E}_{ss},{\bf e}_{hs})|ds\nonumber\\
			&\le&Ch^{k}\norm{{\bf E}_{t}}_{1+k}\tnorm{{\bf e}_{ht}}+Ch^{k}\norm{{\bf v}}_{1+k}\tnorm{{\bf e}_{ht}(0)}\nonumber\\
			&&+Ch^{k}\int_{0}^{t}\norm{{\bf E}_{ss}}_{1+k}\tnorm{{\bf e}_{hs}}ds\nonumber\\
			&\le&Ch^{k}\norm{{\bf E}_{t}}_{1+k}\tnorm{{\bf e}_{ht}}+Ch^{2k}\norm{{\bf v}}_{1+k}(\norm{{\bf v}}_{1+k}+\max_{t\in[0,T]}\norm{P_{t}(t)}_{k})\nonumber\\
			&&+Ch^{k}\int_{0}^{t}\norm{{\bf E}_{ss}}_{1+k}\tnorm{{\bf e}_{hs}}ds.
		\end{eqnarray}
		In the same manner, we have the following estimates
		\begin{eqnarray}
			\left|\int_{0}^{t}\mathcal{M}_3(P_{s},{\bf e}_{hss})ds\right|&\le& Ch^{k}\norm{P_{t}}_{k}\tnorm{{\bf e}_{ht}}+Ch^{2k}\max_{t\in [0,T]}\norm{P_{t}(t)}_{k}(\norm{{\bf v}}_{1+k}+\max_{t\in[0,T]}\norm{P_{t}(t)}_{k})\nonumber\\
			&&+Ch^{k}\int_{0}^{t}\norm{P_{ss}}_{k}\tnorm{{\bf e}_{h}}ds,\label{err12}\\
			\left|\int_{0}^{t}\mathcal{S}({\bf T}_h{\bf E}_{s},{\bf e}_{hss})ds\right| 	&\le&Ch^{k}\norm{{\bf E}_{t}}_{1+k}\tnorm{{\bf e}_{ht}}+Ch^{2k}\norm{{\bf v}}_{1+k}(\norm{{\bf v}}_{1+k}+\max_{t\in[0,T]}\norm{P_{t}(t)}_{k})\nonumber\\
			&&+Ch^{k}\int_{0}^{t}\norm{{\bf E}_{ss}}_{1+k}\tnorm{{\bf e}_{hs}}ds,	\label{err13}\\
			\left|\int_{0}^{t}\mathcal{M}_4({\bf E}_{ss},\tilde{e}_{hs})ds\right|&\le& Ch^{1+k}\norm{{\bf E}_{tt}}_{1+k}\tnorm{{\tilde{e}_{h}}}_{0}+Ch^{1+k}\norm{{\bf E}_{tt}(0)}_{1+k}\tnorm{{\tilde{e}_{h}}(0)}_{0}\nonumber\\
			&&+Ch^{1+k}\int_{0}^{t}\norm{{\bf E}_{sss}}_{1+k}\tnorm{{\tilde{e}_{h}}}_{0}ds\nonumber\\
			&\le& Ch^{k}\norm{{\bf E}_{tt}}_{1+k}\norm{{\tilde{e}_{h}}}+Ch^{k}\norm{{\bf E}_{tt}(0)}_{1+k}\norm{{\tilde{e}_{h}}(0)}\nonumber\\
			&&+Ch^{k}\int_{0}^{t}\norm{{\bf E}_{sss}}_{1+k}\norm{{\tilde{e}_{h}}}ds\label{err14}.
		\end{eqnarray}

		Note that in \eqref{err14}, we have used integration by parts, Lemma \ref{bounds}, followed by the use of inverse estimate in $\tnorm{\cdot}_{0}$-norm (cf. Lemma \ref{energyinverse}).
		
		Combining estimates \eqref{err10}-\eqref{err14}, then using them in \eqref{err9} and by Lemma \ref{ritzestimate}, we deduce
		\begin{eqnarray*}
			&&\norm{\epsilon^{\frac{1}{2}}{\bf e}_{htt}(t)}^{2}+\tnorm{{\bf e}_{ht}(t)}^{2}\nonumber\\
			&&~~\le Ch^{2k}\left(\norm{{\bf v}}_{1+k}^{2}+\max_{t\in[0,T]}\norm{P_{t}(t)}_{k}^{2}\right)\nonumber\\
			&&~~~~+Ch^{k}\norm{{\bf E}_{t}}_{1+k}\tnorm{{\bf e}_{ht}}+Ch^{2k}\norm{{\bf v}}_{1+k}(\norm{{\bf v}}_{1+k}+\max_{t\in[0,T]}\norm{P_{t}(t)}_{k})\nonumber\\
			&&~~~~+Ch^{k}\norm{P_{t}}_{k}\tnorm{{\bf e}_{ht}}+Ch^{2k}\max_{t\in [0,T]}\norm{P_{t}(t)}_{k}(\norm{{\bf v}}_{1+k}+\max_{t\in[0,T]}\norm{P_{t}(t)}_{k})\nonumber\\
			&&~~~~+Ch^{k}\norm{{\bf E}_{tt}}_{1+k}\norm{{\tilde{e}_{h}}}+Ch^{2k}\norm{{\bf E}_{tt}(0)}_{1+k}(\norm{{\bf u}}_{1+k}+\max_{t\in[0,T]}\norm{P(t)}_{k})\nonumber\\
			&&~~~~+Ch^{k}\int_{0}^{t}\norm{{\bf E}_{ss}}_{1+k}\tnorm{{\bf e}_{hs}}ds+Ch^{k}\int_{0}^{t}\norm{P_{ss}}_{k}\tnorm{{\bf e}_{hs}}ds\nonumber\\
			&&~~~~+Ch^{k}\int_{0}^{t}\norm{{\bf E}_{sss}}_{1+k}\norm{\tilde{e}_{h}}ds.
		\end{eqnarray*}
		Applying Young's inequality, we further derive
		\begin{eqnarray*}
			&&\norm{\epsilon^{\frac{1}{2}}{\bf e}_{htt}(t)}^{2}+\tnorm{{\bf e}_{ht}(t)}^{2}\nonumber\\
			&&~~\le Ch^{2k}\left(\norm{{\bf v}}_{1+k}^{2}+\max_{t\in[0,T]}\norm{P_{t}(t)}_{k}^{2}\right)+Ch^{2k}\norm{{\bf E}_{t}}_{1+k}^{2}+\frac{1}{4}\tnorm{{{\bf e}_{ht}}}^{2}\nonumber\\
			&&~~~~+Ch^{2k}\norm{{\bf v}}_{1+k}^{2}+Ch^{2k}\left(\norm{{\bf v}}_{1+k}^{2}+\max_{t\in[0,T]}\norm{P_{t}(t)}_{k}^{2}\right)\nonumber\\
			&&~~~~+Ch^{2k}\norm{P_{t}}_{k}^{2}+\frac{1}{4}\tnorm{{\bf e}_{ht}}^{2}+Ch^{2k}\max_{t\in[0,T]}\norm{P_{t}(t)}^{2}+Ch^{2k}\left(\norm{{\bf v}}_{1+k}^{2}+\max_{t\in[0,T]}\norm{P_{t}(t)}_{k}^{2}\right)\nonumber\\
			&&~~~~+Ch^{k}\norm{{\bf E}_{tt}}_{1+k}\norm{{\tilde{e}_{h}}}+Ch^{2k}\norm{{\bf E}_{tt}(0)}_{1+k}^{2}+Ch^{2k}\left(\norm{{\bf u}}_{1+k}^{2}+\max_{t\in[0,T]}\norm{P(t)}_{k}^{2}\right)\nonumber\\
			&&~~~~+Ch^{2k}\int_{0}^{t}\norm{{\bf E}_{ss}}_{1+k}^{2}ds+\frac{1}{2}\int_{0}^{t}\tnorm{{\bf e}_{hs}}^{2}ds+Ch^{2k}\int_{0}^{t}\norm{P_{ss}}_{k}^{2}ds\nonumber\\
			&&~~~~+\frac{1}{2}\int_{0}^{t}\tnorm{{\bf e}_{hs}}^{2}ds+Ch^{2k}\int_{0}^{t}\norm{{\bf E}_{sss}}_{1+k}^{2}ds+\frac{1}{2}\int_{0}^{t}\norm{\tilde{e}_{h}}^{2}ds\nonumber\\
			&&~~\le Ch^{2k}\left(\norm{{\bf E}}_{{\bf H}^{3}({\bf H}^{1+k}(\Omega))}^{2}+\norm{P}_{H^{2}(H^{k}(\Omega))}^{2}\right)+Ch^{k}\norm{{\bf E}_{tt}}_{1+k}\norm{{\tilde{e}_{h}}}+\frac{1}{2}\tnorm{{\bf e}_{ht}}^{2}\nonumber\\
			&&~~~~+C\int_{0}^{t}\tnorm{{\bf e}_{hs}}^{2}ds+C\int_{0}^{t}\norm{\tilde{e}_{h}}^{2}ds.
		\end{eqnarray*}
		Simplifying the above inequality results in
		\begin{eqnarray*}
			\norm{\epsilon^{\frac{1}{2}}{\bf e}_{htt}(t)}^{2}+\tnorm{{\bf e}_{ht}(t)}^{2}
			&\le& Ch^{2k}\left(\norm{{\bf E}}_{{\bf H}^{3}({\bf H}^{1+k}(\Omega))}^{2}+\norm{P}_{H^{2}(H^{k}(\Omega))}^{2}\right)+Ch^{k}\norm{{\bf E}_{t}}_{1+k}\norm{{\tilde{e}_{h}}}\nonumber\\
			&&+C\int_{0}^{t}\tnorm{{\bf e}_{hs}}^{2}ds+C\int_{0}^{t}\norm{\tilde{e}_{h}}^{2}ds.
		\end{eqnarray*}
		Utilizing Modified Gronwall's inequality (cf. Lemma \ref{g2}) in the above estimate, we observe
		\begin{eqnarray}\label{err15}
			\norm{\epsilon^{\frac{1}{2}}{\bf e}_{htt}(t)}^{2}+\tnorm{{\bf e}_{ht}(t)}^{2}
			&\le& Ch^{2k}\left(\norm{{\bf E}}_{{\bf H}^{3}({\bf H}^{1+k}(\Omega))}^{2}+\norm{P}_{H^{2}(H^{k}(\Omega))}^{2}\right)+Ch^{k}\norm{{\bf E}_{t}}_{1+k}\norm{{\tilde{e}_{h}}}\nonumber\\
			&&+Ch^{k}\int_{0}^{t}\norm{{\bf E}_{s}}_{1+k}\norm{{\tilde{e}_{h}}}ds+C\int_{0}^{t}\norm{\tilde{e}_{h}}^{2}ds.
		\end{eqnarray}
		
		It follows from the second error equation of \eqref{sderrorequation} and the inf-sup condition (cf. Lemma \ref{infsup}) that for $\tilde{e}_{h}\in Q_{h}^{0}$ there exists some ${\bf v}_{h}\in {\bf V}_{h}^{0}$ such that we have
		\begin{eqnarray*}
			\norm{\tilde{e}_{h}}^{2}&=&B({\bf v}_{h},\tilde{e}_{h})\nonumber\\
			&=&(\epsilon{\bf e}_{htt},{\bf v}_h)+A({\bf e}_h,{\bf v}_h)-\sum_{i=1}^{2}\mathcal{M}_i({\bf E},{\bf v}_{h})-\mathcal{M}_3(P,{\bf v}_{h})-\mathcal{S}({\bf T}_h{\bf E},{\bf v}_h)\nonumber\\
			&\le& C\norm{\epsilon^{\frac{1}{2}}{\bf e}_{htt}}\norm{{\bf v}_{h}}+C\tnorm{{\bf e}_{h}}\tnorm{{\bf v}_{h}}+Ch^{k}\norm{{\bf E}}_{1+k}\tnorm{{\bf v}_{h}}+Ch^{k}\norm{P}_{k}\tnorm{{\bf v}_{h}}\nonumber\\
			&\le& C\norm{\epsilon^{\frac{1}{2}}{\bf e}_{htt}}\tnorm{{\bf v}_{h}}_{1}+C\tnorm{{\bf e}_{h}}\tnorm{{\bf v}_{h}}_{1}+Ch^{k}\norm{{\bf E}}_{1+k}\tnorm{{\bf v}_{h}}_{1}+Ch^{k}\norm{P}_{k}\tnorm{{\bf v}_{h}}_{1}\nonumber\\
			&\le& C\norm{\epsilon^{\frac{1}{2}}{\bf e}_{htt}}\norm{\tilde{e}_{h}}+\tnorm{{\bf e}_{h}}\norm{\tilde{e}_{h}}+Ch^{k}\norm{{\bf E}}_{1+k}\norm{\tilde{e}_{h}}+Ch^{k}\norm{P}_{k}\norm{\tilde{e}_{h}}\nonumber\\
			&\le& C\norm{\epsilon^{\frac{1}{2}}{\bf e}_{htt}}^{2}+C\tnorm{{\bf e}_{h}}^{2}+Ch^{2k}\norm{{\bf E}}_{1+k}^{2}+Ch^{2k}\norm{P}_{k}^{2}+\frac{1}{2}\norm{\tilde{e}_{h}}^{2}.
		\end{eqnarray*}
		Note that for deriving the above bound, we have used Cauchy-Schwarz, Lemma \ref{bounds}, the fact $\tnorm{{\bf v}_{h}}_{1}\le C\norm{\tilde{e}_{h}}$ (cf. Lemma \ref{infsup}), Lemma \ref{bounded} and Young's inequality.
		
		From \eqref{err88} and \eqref{err15}, we deduce
		\begin{eqnarray}
			\norm{\tilde{e}_{h}}^{2}&\le& C\norm{\epsilon^{\frac{1}{2}}{\bf e}_{htt}}^{2}+C\tnorm{{\bf e}_{h}}^{2}+Ch^{2k}\norm{{\bf E}}_{1+k}^{2}+Ch^{2k}\norm{P}_{k}^{2}\nonumber\\
			&\le& Ch^{2k}\left(\norm{P}_{H^{2}(H^{k}(\Omega))}^{2}+\norm{{\bf E}}^{2}_{{\bf H}^{3}({\bf H}^{1+k}(\Omega))}\right)+C\int_{0}^{t}\norm{\tilde{e}_{h}}^{2}ds\nonumber\\
			&&+ Ch^{2k}\left(\norm{{\bf E}}_{{\bf H}^{3}({\bf H}^{1+k})}^{2}+\norm{P}_{H^{2}(H^{k})}^{2}\right)+Ch^{k}\norm{{\bf E}_{t}}_{1+k}\norm{{\tilde{e}_{h}}}\nonumber\\
			&&+Ch^{k}\int_{0}^{t}\norm{{\bf E}_{s}}_{1+k}\norm{{\tilde{e}_{h}}}ds+C\int_{0}^{t}\norm{\tilde{e}_{h}}^{2}ds\nonumber\\
			&&+Ch^{2k}\norm{{\bf E}}_{1+k}^{2}+Ch^{2k}\norm{P}_{k}^{2}\nonumber\\
			&\le& Ch^{2k}\left(\norm{P}_{H^{2}(H^{k}(\Omega))}^{2}+\norm{{\bf E}}^{2}_{{\bf H}^{3}({\bf H}^{1+k}(\Omega))}\right)+Ch^{2k}\norm{{\bf E}_{t}}_{1+k}^{2}+\frac{1}{2}\norm{{\tilde{e}_{h}}}^{2}\nonumber\\
			&&+Ch^{2k}\int_{0}^{t}\norm{{\bf E}_{s}}_{1+k}^{2}ds+C\int_{0}^{t}\norm{\tilde{e}_{h}}^{2}ds.
		\end{eqnarray}	
		Hence, by Gronwall's inequality 1 (cf. Lemma \ref{g1}), we achieve
		\begin{eqnarray}\label{err16}
			\norm{\tilde{e}_{h}}^{2}&\le& Ch^{2k}\left(\norm{P}_{H^{2}(H^{k}(\Omega))}^{2}+\norm{{\bf E}}^{2}_{{\bf H}^{3}({\bf H}^{1+k}(\Omega))}\right).
		\end{eqnarray}
		This completes the proof of \eqref{semiest2}. Utilizing \eqref{semiest2} in \eqref{err88} establishes \eqref{semiest1}.
	\end{proof}
	\begin{rem}
		From triangle inequality, \eqref{ritzestimate} and \eqref{semiest1}, we achieve
		\begin{eqnarray}
			\norm{\epsilon^{\frac{1}{2}}{\bf e}_{h}(t)}^{2}
			&\le&C\norm{\int_{0}^{t}\epsilon^{\frac{1}{2}}{\bf e}_{hs}(s)ds}^{2}+C\norm{\epsilon^{\frac{1}{2}}{\bf e}_{h}(0)}^{2}\nonumber\\
			&\le&CT\norm{\epsilon^{\frac{1}{2}}{\bf e}_{ht}(t)}_{{\bf L}^{\infty}({\bf L}^{2}(\Omega))}^{2}+C\norm{\epsilon^{\frac{1}{2}}({\bf T}_{h}{\bf E}(0)-{\bf E}_{h}(0))}^{2}\nonumber\\
			&\le&CT\norm{\epsilon^{\frac{1}{2}}{\bf e}_{ht}(t)}_{{\bf L}^{\infty}({\bf L}^{2}(\Omega))}^{2}+Ch^{2(1+k)}\left(\norm{{\bf u}}_{1+k}^{2}+\norm{P(0)}^{2}_{k}\right)\nonumber\\
			&\le&CT\norm{\epsilon^{\frac{1}{2}}{\bf e}_{ht}(t)}_{{\bf L}^{\infty}({\bf L}^{2}(\Omega))}^{2}+Ch^{2(1+k)}\left(\norm{{\bf u}}_{1+k}^{2}+\max_{t\in[0,T]}\norm{P(t)}^{2}_{k}\right)\nonumber\\
			&\le&	 Ch^{2k}\left(\norm{{\bf E}}_{{\bf H}^{3}({\bf H}^{1+k}(\Omega))}^{2}+\norm{P}_{H^{2}(H^{k}(\Omega))}^{2}\right).
		\end{eqnarray}
	\end{rem}
	The above estimate combined with \eqref{semiest1} results in the following optimal convergence result for the error under the discrete energy norm given below.
	\begin{cor}\label{fullest}
		Let the assumptions of Theorem \ref{SemHcurl} hold true. Then we have the following error estimate in energy norm
		\begin{eqnarray}\label{semiest3}
			\mathrm{ess}\sup_{t\in[0,T]}\tnorm{{\bf e}_{h}(t)}_{1}^{2}
			&\le&	 Ch^{2k}\left(\norm{{\bf E}}_{{\bf H}^{3}({\bf H}^{1+k}(\Omega))}^{2}+\norm{P}_{H^{2}(H^{k}(\Omega))}^{2}\right).
		\end{eqnarray}
	\end{cor}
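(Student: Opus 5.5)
The estimate \eqref{semiest3} follows by adding the two pieces already available: the energy part from Theorem \ref{SemHcurl} and the ${\bf L}^{2}$ part from the preceding Remark. By definition, for each fixed $t\in[0,T]$,
\[
\tnorm{{\bf e}_{h}(t)}_{1}^{2}=\norm{{\bf e}_{h}(t)}^{2}+\tnorm{{\bf e}_{h}(t)}^{2},
\]
so it suffices to bound each term on the right by $Ch^{2k}\bigl(\norm{{\bf E}}_{{\bf H}^{3}({\bf H}^{1+k}(\Omega))}^{2}+\norm{P}_{H^{2}(H^{k}(\Omega))}^{2}\bigr)$ with $C$ independent of $t$ and $h$.

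For the second term I invoke \eqref{semiest1} directly; I simply discard the nonnegative contribution $\norm{\epsilon^{1/2}{\bf e}_{ht}(t)}^{2}$ from its left side.

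For the first term I use the Remark following Theorem \ref{SemHcurl}. There ${\bf e}_{h}(t)={\bf e}_{h}(0)+\int_{0}^{t}{\bf e}_{hs}\,ds$, and the integral is controlled by $T\,\mathrm{ess}\sup_{s}\norm{\epsilon^{1/2}{\bf e}_{hs}(s)}^{2}$, which \eqref{semiest1} bounds. The initial term ${\bf e}_{h}(0)={\bf T}_{h}{\bf u}-\mathbb{R}_{h}{\bf u}$ is bounded by Lemma \ref{ritzestimate} by $Ch^{2(1+k)}$, or by $Ch^{2(\delta+k)}$ if one only uses the $\delta$-version; either way it is at most $Ch^{2k}$. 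Since $\epsilon$ is a positive constant, $\norm{{\bf e}_{h}}^{2}\le \epsilon^{-1}\norm{\epsilon^{1/2}{\bf e}_{h}}^{2}$.

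Adding the two bounds, I take the essential supremum over $t\in[0,T]$. This is legitimate because every constant produced above is uniform in $t$: the right-hand sides involve only the global Bochner norms, and the $\max_{t}$ quantities are controlled by the $H^{2}$/${\bf H}^{3}$-in-time norms via the Sobolev embedding $H^{1}(0,T;\mathcal{B})\hookrightarrow C([0,T];\mathcal{B})$, as in \eqref{err88}.

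The only point that needs care is this uniformity in $t$ of the constants in \eqref{semiest1} and in the Remark. They come from the Gronwall lemmas, where the factors $e^{CT}$ are independent of $t\le T$, so I expect no genuine obstacle. The proof is essentially a bookkeeping combination of already established estimates.
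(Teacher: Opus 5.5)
Your proposal is correct and follows the paper's own route. The paper obtains the corollary exactly by adding the ${\bf L}^{2}$ bound of the preceding Remark to the energy bound \eqref{semiest1}, using $\tnorm{\cdot}_{1}^{2}=\norm{\cdot}^{2}+\tnorm{\cdot}^{2}$; that Remark writes ${\bf e}_{h}(t)={\bf e}_{h}(0)+\int_{0}^{t}{\bf e}_{hs}\,ds$, controls the integral via \eqref{semiest1} and controls the initial term via Lemma \ref{ritzestimate}. (As a trivial aside, Cauchy--Schwarz in time gives a factor $T^{2}$ rather than $T$ in front of $\mathrm{ess}\sup_{s}\norm{\epsilon^{1/2}{\bf e}_{hs}(s)}^{2}$, which is harmless since $C$ may depend on $T$.)
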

	\begin{rem}
		Under similar regularity assumptions, the authors in \cite{qi2025decoupled} established error estimates in only an ``energy" semi-norm for the electric field error. However, a convergence result for the potential function's error in the \(L^{2}\)-norm was also provided.
	\end{rem}
	Our objective now remains to determine the ${\bf L}^{2}$-norm error estimate for the electric field variable. We now split the semi-discrete projected errors ${\bf e}_{h}$ and $\tilde{e}_{h}$ in the following way:
	\begin{eqnarray}
		{\bf e}_{h}={\bf T}_{h}{\bf E}-{\bf E}_{h}&:=&\boldsymbol{\rho}_{h}+\boldsymbol{\chi}_{h},\nonumber\\
		\tilde{e}_{h}=L_{h}P-P_{h}&:=&\delta_{h}+\gamma_{h}.
	\end{eqnarray}
	Here $\boldsymbol{\rho}_{h}={\bf T}_{h}{\bf E}-\mathbb{R}_{h}{\bf E}$, $\boldsymbol{\chi}_{h}=\mathbb{R}_{h}{\bf E}-{\bf E}_{h}$, $\delta_{h}=L_{h}P-\mathbb{E}_{h}P$ and $\gamma_{h}=\mathbb{E}_{h}P-P_{h}$. 
	As a consequence of Lemma \ref{ritzestimate}, for the case $({\bf z},q)=({\bf E},P)$, we have the following estimate for the term $\boldsymbol{\rho}_{h}$ given as
	\begin{eqnarray}\label{rho}
		\norm{{\bf T}_{h}{\bf E}-\mathbb{R}_{h}{\bf E}}=\norm{\boldsymbol{\rho}_{h}}\le Ch^{\alpha+k}\left(\norm{{\bf E}}_{1+k}+\norm{P}_{k}\right).
	\end{eqnarray}
	Hence for deriving the optimal ${\bf L}^{2}$-norm error estimate for the electric field, we only have to find a bound the term $\boldsymbol{\chi}_{h}$. 
	\begin{lemma}\label{chithm}
		Under the regularity assumptions,
		\begin{eqnarray*}
			&&{\bf E}\in {\bf H}^{2}\!\left({\bf H}^{1+k}(\Omega)\cap {\bf H}_0(\mathrm{curl};\Omega)\right),\\
			&&P\in H^{2}\!\left(H^{k}(\Omega)\cap H^{1}_{0}(\Omega)\right),
		\end{eqnarray*}
	   	of the true solution of \eqref{model3}, we have the following estimate:
		\begin{equation}\label{chi}
			\norm{\epsilon^{\frac{1}{2}}\boldsymbol{\chi}_{h}}_{{\bf L}^{\infty}\!\left(0,T; {\bf L}^{2}(\Omega)\right)}^{2}
			\le C\,T\,h^{2(\delta+k)}
			\left(
			\int_{0}^{T}\norm{{\bf E}_{ss}}_{1+k}^{2}\,ds
			+
			\int_{0}^{T}\norm{P_{ss}}_{k}^{2}\,ds
			\right).
		\end{equation}
	\end{lemma}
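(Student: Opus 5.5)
We derive an error equation for the pair $(\boldsymbol{\chi}_h,\gamma_h)$ in which the Ritz projection removes every consistency residual, and then run an energy argument after integrating once in time, so that only $\|\boldsymbol{\chi}_h\|$ has to be controlled.

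\emph{Error equations.} Take $({\bf z},q)=({\bf E}(t),P(t))$ in \eqref{Ritz}. By \eqref{model3}, ${\bf f}_{{\bf E},P}={\bf F}-\epsilon{\bf E}_{tt}$ and $g_{{\bf E},P}=\rho$. Subtracting \eqref{sdalg1}--\eqref{sdalg2} then gives, for all $({\bf v}_h,q_h)\in{\bf V}_h^0\times Q_h^0$,
\begin{eqnarray*}
(\epsilon\boldsymbol{\chi}_{htt},{\bf v}_h)+A(\boldsymbol{\chi}_h,{\bf v}_h)-B({\bf v}_h,\gamma_h)&=&(\epsilon(\mathbb{R}_h{\bf E}_{tt}-{\bf E}_{tt}),{\bf v}_h),\\
B(\boldsymbol{\chi}_h,q_h)&=&0 .
\end{eqnarray*}
Here we use that $\mathbb{R}_h$ commutes with $\partial_t$, which holds because $\mathbb{R}_h$ is linear and time independent. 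Since $({\bf E}_{tt}-{\bf T}_h{\bf E}_{tt},{\bf v}_h)=0$, the right-hand side equals $-(\epsilon\boldsymbol{\rho}_{htt},{\bf v}_h)$, where $\boldsymbol{\rho}_{htt}={\bf T}_h{\bf E}_{tt}-\mathbb{R}_h{\bf E}_{tt}$. Lemma \ref{ritzestimate}, applied with $({\bf z},q)=({\bf E}_{tt},P_{tt})$, gives
\[
\|\boldsymbol{\rho}_{htt}\|\le Ch^{\delta+k}\left(\|{\bf E}_{tt}\|_{1+k}+\|P_{tt}\|_k\right).
\]
This is the source of the right-hand side of \eqref{chi}. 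The initial data also vanish: by \eqref{sdalg3}--\eqref{sdalg4}, $\boldsymbol{\chi}_h(0)=\mathbb{R}_h{\bf u}-{\bf E}_h(0)=0$ and $\boldsymbol{\chi}_{ht}(0)=0$. (The splitting above also needs $P_h(0)=\mathbb{E}_hP(0)$, as in Remark \ref{initialchoice}; otherwise it plays no role.)

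\emph{Integrate in time.} Define $\hat{\boldsymbol{\chi}}_h(t)=\int_t^{\tau}\boldsymbol{\chi}_h\,ds$ for fixed $\tau\in(0,T]$, and similarly $\hat\gamma_h$. Because $\boldsymbol{\chi}_{ht}(0)=0$, integrating the first equation from $0$ to $t$ gives
\[
(\epsilon\boldsymbol{\chi}_{ht},{\bf v}_h)+A(\textstyle\int_0^t\boldsymbol{\chi}_h,{\bf v}_h)-B({\bf v}_h,\int_0^t\gamma_h)=-(\epsilon\int_0^t\boldsymbol{\rho}_{hss},{\bf v}_h),
\]
and the constraint still holds in the form $B(\int_0^t\boldsymbol{\chi}_h,q_h)=0$. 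I would take ${\bf v}_h=\boldsymbol{\chi}_h$ and $q_h=\int_0^t\gamma_h$. The $B$-terms then cancel, since
\[
B(\boldsymbol{\chi}_h,\textstyle\int_0^t\gamma_h)=\frac{d}{dt}\bigl[B(\int_0^t\boldsymbol{\chi}_h,\int_0^t\gamma_h)\bigr]-B(\int_0^t\boldsymbol{\chi}_h,\gamma_h),
\]
and both terms on the right vanish by the constraint for the integrated and unintegrated $\boldsymbol{\chi}_h$. Moreover
\[
A(\textstyle\int_0^t\boldsymbol{\chi}_h,\boldsymbol{\chi}_h)=\frac12\frac{d}{dt}\tnorm{\int_0^t\boldsymbol{\chi}_h}^2\ge\cdots .
\]
This yields
\[
\frac12\frac{d}{dt}\Bigl(\|\epsilon^{1/2}\boldsymbol{\chi}_h\|^2+\tnorm{\textstyle\int_0^t\boldsymbol{\chi}_h}^2\Bigr)=-(\epsilon\textstyle\int_0^t\boldsymbol{\rho}_{hss},\boldsymbol{\chi}_h).
\]
Using the simpler integrated form $\int_0^t\boldsymbol{\rho}_{hss}\,ds=\boldsymbol{\rho}_{ht}(t)-\boldsymbol{\rho}_{ht}(0)$ would reintroduce first time derivatives. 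I would instead keep the integral form and bound it by Cauchy--Schwarz:
\[
\|\textstyle\int_0^t\boldsymbol{\rho}_{hss}\|\le\sqrt{T}\bigl(\int_0^T\|\boldsymbol{\rho}_{hss}\|^2\bigr)^{1/2}.
\]

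\emph{Close.} Integrate from $0$ to $\tau$; all initial terms vanish. Estimate the right-hand side by
\[
C\sqrt{T}\,h^{\delta+k}\Bigl(\int_0^T(\|{\bf E}_{ss}\|_{1+k}^2+\|P_{ss}\|_k^2)\,ds\Bigr)^{1/2}\int_0^\tau\|\epsilon^{1/2}\boldsymbol{\chi}_h\|\,ds .
\]
Bounding $\int_0^\tau\|\epsilon^{1/2}\boldsymbol{\chi}_h\|$ by $T\sup_t\|\epsilon^{1/2}\boldsymbol{\chi}_h\|$, taking the supremum over $\tau$, and applying Young's inequality (or a Gronwall step via Lemma \ref{g1}) gives the bound $\|\epsilon^{1/2}\boldsymbol{\chi}_h\|_{L^\infty(L^2)}^2\le C\,T\,h^{2(\delta+k)}(\cdots)$. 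The extra powers of $T$ are absorbed into $C$, which may depend on $T$.

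The main obstacle is the cancellation of the saddle-point coupling term in the integrated equation. It depends on the discrete constraint $B(\boldsymbol{\chi}_h,\cdot)=0$ being exact, which in turn depends on the Ritz pair \eqref{Ritz} reproducing the data $\rho$ exactly, and on the consistency ${\bf f}_{{\bf E},P}={\bf F}-\epsilon{\bf E}_{tt}$. Both must be checked carefully, together with the commutation of $\mathbb{R}_h$ with time differentiation, which requires that $({\bf E}_{tt},P_{tt})$ have the regularity assumed in Lemma \ref{ritzestimate}.
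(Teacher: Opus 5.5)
Your argument is correct, but it is a different energy argument from the paper's. The paper never integrates the error equation in time. It tests \eqref{sL21} with $\boldsymbol{\chi}_{ht}$ and the time-differentiated constraint with $\gamma_h$, which gives $\frac12\frac{d}{dt}\bigl(\norm{\epsilon^{1/2}\boldsymbol{\chi}_{ht}}^2+\tnorm{\boldsymbol{\chi}_h}^2\bigr)=-(\epsilon\boldsymbol{\rho}_{htt},\boldsymbol{\chi}_{ht})$. Cauchy--Schwarz, Young and Lemma \ref{g1} then bound $\norm{\epsilon^{1/2}\boldsymbol{\chi}_{ht}}$ and $\tnorm{\boldsymbol{\chi}_h}$ uniformly in time by $Ch^{\delta+k}(\cdots)$. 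Only at the end does it recover $\norm{\boldsymbol{\chi}_h(t)}$ from $\boldsymbol{\chi}_h(t)=\int_0^t\boldsymbol{\chi}_{hs}\,ds$.

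You instead integrate the error equation once (using $\boldsymbol{\chi}_{ht}(0)=0$) and test with $\boldsymbol{\chi}_h$ itself. The energy you control is then $\norm{\epsilon^{1/2}\boldsymbol{\chi}_h}^2+\tnorm{\int_0^t\boldsymbol{\chi}_h\,ds}^2$, and the estimate closes by an elementary $X^2\le aX$ argument with no Gronwall step. The ingredients are the same in both proofs: the Ritz pair removes every consistency term, $B(\boldsymbol{\chi}_h,\cdot)=0$ on $Q_h^0$, Lemma \ref{ritzestimate} is applied to $({\bf E}_{tt},P_{tt})$, and the initial data vanish.

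Each route buys something different:
\begin{itemize}
\item The paper's route yields, as a by-product, the superclose bound $\tnorm{\boldsymbol{\chi}_h}+\norm{\epsilon^{1/2}\boldsymbol{\chi}_{ht}}\le Ch^{\delta+k}(\cdots)$, which yours does not give.
\item Yours is the one that can lower the time regularity. Writing $\int_0^t\boldsymbol{\rho}_{hss}\,ds=\boldsymbol{\rho}_{ht}(t)-\boldsymbol{\rho}_{ht}(0)$ gives an $h^{\delta+k}$ bound that needs only ${\bf E}_t\in{\bf L}^\infty({\bf H}^{1+k}(\Omega))$ and $P_t\in L^\infty(H^k(\Omega))$. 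Keeping the integral form, as you do, is needed only to reproduce the stated right-hand side. The paper's route cannot do this, since integrating $(\epsilon\boldsymbol{\rho}_{htt},\boldsymbol{\chi}_{ht})$ by parts produces the uncontrolled $\boldsymbol{\chi}_{htt}$.
\end{itemize}

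A few smaller points:
\begin{itemize}
\item The coupling cancellation you single out as the main obstacle is immediate. For each fixed $t$, $\int_0^t\gamma_h\,ds$ lies in $Q_h^0$, and $B(\boldsymbol{\chi}_h(t),q_h)=0$ for all $q_h\in Q_h^0$, so the product-rule detour is unnecessary.
\item The functions $\hat{\boldsymbol{\chi}}_h$ and $\hat\gamma_h$ are defined but never used.
\item The ``$\ge\cdots$'' after the identity for $A$ should simply be an equality.
\item The assumption $P_h(0)=\mathbb{E}_hP(0)$ plays no role in this lemma, in either proof.
\end{itemize}
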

	\begin{proof}
		We initiate the proof by using definition of $\boldsymbol{\chi}_{h}$, then from definition of Ritz projection \eqref{Ritz}, semi-discrete dG algorithm \eqref{sdalg1} and properties of ${\bf T}_{h}$, for any $\boldsymbol{\phi}_{h}\in {\bf V}_{h}^{0}$, we have
		\begin{eqnarray}\label{sL21}
			&&(\epsilon\boldsymbol{\chi}_{htt},\boldsymbol{\phi}_{h})+A(\boldsymbol{\chi}_{h},\boldsymbol{\phi}_{h})-B(\boldsymbol{\phi}_{h},\gamma_{h})\nonumber\\
			&&~~=(\epsilon\left(\mathbb{R}_{h}{\bf E}_{tt}-{\bf E}_{htt}\right),\boldsymbol{\phi}_{h})+A(\mathbb{R}_{h}{\bf E}-{\bf E}_{h},\boldsymbol{\phi}_{h})-B(\boldsymbol{\phi}_{h},\mathbb{E}_{h}P-P_{h})\nonumber\\
			&&~~=(\epsilon\mathbb{R}_{h}{\bf E}_{tt},\boldsymbol{\phi}_{h})+A(\mathbb{R}_{h}{\bf E},\boldsymbol{\phi}_{h})	- B(\boldsymbol{\phi}_{h},\mathbb{E}_{h}P)\nonumber\\
			&&~~~~ - ((\epsilon{\bf E}_{htt},\boldsymbol{\phi}_{h})+A({\bf E}_{h},\boldsymbol{\phi}_{h})	- B(\boldsymbol{\phi}_{h},P_{h}))\nonumber\\
			&&~~=(\epsilon\mathbb{R}_{h}{\bf E}_{tt},\boldsymbol{\phi}_{h})+({\bf f}_{{\bf E},P}, \boldsymbol{\phi}_h)	- ({\bf F},\boldsymbol{\phi}_h) \nonumber\\
			&&~~=(\epsilon\left(\mathbb{R}_{h}{\bf E}_{tt}-{\bf E}_{tt}\right),\boldsymbol{\phi}_h)\nonumber\\
			&&~~=(\epsilon\left(\mathbb{R}_{h}{\bf E}_{tt}-{\bf T}_{h}{\bf E}_{tt}\right),\boldsymbol{\phi}_h)=-(\epsilon\boldsymbol{\rho}_{htt},\boldsymbol{\phi}_h).
		\end{eqnarray}
		Using definition of Ritz projection \eqref{Ritz}, for any $\psi_{h}\in Q_{h}^{0}$, we can deduce
		\begin{eqnarray}\label{sL22}
			B(\boldsymbol{\chi}_{h},\psi_{h})
			&=&B(\mathbb{R}_{h}{\bf E}-{\bf E}_{h},\psi_{h})\nonumber\\
			&=&B(\mathbb{R}_h {\bf E},\psi_{h})-B({\bf E}_{h},\psi_{h})\nonumber\\
			&=&-(g_{{\bf E},P},\psi_{h})+(\rho,\psi_{h})=0.
		\end{eqnarray}
		Differentiating \eqref{sL22} with respect to $t$, then substituting $\psi_{h}=\gamma_{h}$ and adding it to the equation obtained after replacing $\boldsymbol{\phi}_{h}$ by $\boldsymbol{\chi}_{ht}$ in \eqref{sL21}, we get
		\begin{eqnarray}\label{sL23}
			\frac{1}{2}\frac{d}{dt}\left(\norm{\epsilon^{\frac{1}{2}}\boldsymbol{\chi}_{ht}}^{2}+\tnorm{\boldsymbol{\chi}_{h}}^{2}\right)=-(\epsilon\boldsymbol{\rho}_{htt},\boldsymbol{\chi}_{ht}).
		\end{eqnarray}
		Integrating the above equation from $0$ to $t$, then from the fact $\boldsymbol{\chi}_{ht}(\cdot,0)=\boldsymbol{\chi}_{h}(\cdot,0)=0$, Cauchy-Schwarz inequality and estimate \eqref{rho}, leads to
		\begin{eqnarray*}
			&&\norm{\epsilon^{\frac{1}{2}}\boldsymbol{\chi}_{ht}(\cdot,t)}^{2}+\tnorm{\boldsymbol{\chi}_{h}(\cdot,t)}^{2}\nonumber\\
			&&~~= 	\norm{\epsilon^{\frac{1}{2}}\boldsymbol{\chi}_{ht}(\cdot,0)}^{2}+\tnorm{\boldsymbol{\chi}_{h}(\cdot,0)}^{2}-2\int_{0}^{t}(\epsilon\boldsymbol{\rho}_{hss},\boldsymbol{\chi}_{hs})ds\nonumber\\
			&&~~\le2 \Big|\int_{0}^{t}(\epsilon\boldsymbol{\rho}_{hss},\boldsymbol{\chi}_{hs})ds\Big|\nonumber\\
			&&~~\le2 \int_{0}^{t}\norm{\epsilon^{\frac{1}{2}}\boldsymbol{\rho}_{hss}}\norm{\epsilon^{\frac{1}{2}}\boldsymbol{\chi}_{hs}}ds\nonumber\\
			&&~~\le Ch^{\delta+k}\int_{0}^{t}\left(\norm{{\bf E}_{ss}}_{1+k}+\norm{P_{ss}}_{k}\right)\norm{\epsilon^{\frac{1}{2}}\boldsymbol{\chi}_{hs}}ds\nonumber\\
			&&~~\le Ch^{2(\delta+k)}\left(\int_{0}^{t}\norm{{\bf E}_{ss}}_{1+k}^{2}ds+\int_{0}^{t}\norm{P_{ss}}_{k}^{2}ds\right)+\frac{1}{2}\int_{0}^{t}\norm{\epsilon^{\frac{1}{2}}\boldsymbol{\chi}_{hs}}^{2}ds.
		\end{eqnarray*}
		Applying Gronwall's inequality 1 (cf. Lemma \ref{g1}) in above bound, we have
		\begin{equation*}
			\norm{\epsilon^{\frac{1}{2}}\boldsymbol{\chi}_{ht}(\cdot,t)}^{2}+\tnorm{\boldsymbol{\chi}_{h}(\cdot,t)}^{2}\le  Ch^{2(\delta+k)}\left(\int_{0}^{t}\norm{{\bf E}_{ss}}_{1+k}^{2}ds+\int_{0}^{t}\norm{P_{ss}}_{k}^{2}ds\right).~~
		\end{equation*}
		Taking essential supremum over all $t\in(0,T]$ on both sides of above inequality, we have
		\begin{equation}\label{sL25}
			\norm{\epsilon^{\frac{1}{2}}\boldsymbol{\chi}_{ht}}_{{\bf L}^{\infty}({\bf L}^{2}(\Omega))}^{2}+\tnorm{\boldsymbol{\chi}_{h}}_{{\bf L}^{\infty}({\bf L}^{2}(\Omega))}^{2}\le  Ch^{2(\delta+k)}\left(\int_{0}^{T}\norm{{\bf E}_{ss}}_{1+k}^{2}ds+\int_{0}^{T}\norm{P_{ss}}_{k}^{2}ds\right).~~
		\end{equation}
		Further, since $\boldsymbol{\chi}_{h}(\cdot,0)=0$, from \eqref{sL25} we can write
		\begin{eqnarray*}
			\norm{\epsilon^{\frac{1}{2}}\boldsymbol{\chi}_{h}(\cdot,t)}^{2}&=&\norm{\int_{0}^{t}\epsilon^{\frac{1}{2}}\boldsymbol{\chi}_{hs}(\cdot,s)ds+\epsilon^{\frac{1}{2}}\boldsymbol{\chi}_{h}(\cdot,0)}^{2}\nonumber\\
			&\le& CTh^{2(\delta+k)}\left(\int_{0}^{T}\norm{{\bf E}_{ss}}_{1+k}^{2}ds+\int_{0}^{T}\norm{P_{ss}}_{k}^{2}ds\right).
		\end{eqnarray*} 
		This completes the proof.
	\end{proof}
	\begin{thm}\label{optL2}
		Assume the conditions of Lemma \ref{chithm} hold true. Then, we have the following optimal error estimate:
		\begin{eqnarray}\label{SemOptL2}
			\norm{{\bf e}_{h}}_{{\bf L}^{\infty}({\bf L}^{2}(\Omega))}^{2}
			&\le& Ch^{2(\delta+k)}\left(\norm{{\bf E}}^{2}_{{\bf L}^{\infty}({\bf H}^{1+k}(\Omega))}+\norm{P}^{2}_{L^{\infty}(H^{k}(\Omega))}\right)\nonumber\\
			&&+CTh^{2(\delta+k)}\left(\int_{0}^{T}\norm{{\bf E}_{ss}}_{1+k}^{2}ds+\int_{0}^{T}\norm{P_{ss}}_{k}^{2}ds\right).~~ 
		\end{eqnarray}
	\end{thm}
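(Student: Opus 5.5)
The proof follows from the splitting ${\bf e}_{h}=\boldsymbol{\rho}_{h}+\boldsymbol{\chi}_{h}$ introduced just before Lemma \ref{chithm}. For each fixed $t\in[0,T]$, the triangle inequality gives
\begin{equation*}
\norm{{\bf e}_{h}(t)}^{2}\le 2\norm{\boldsymbol{\rho}_{h}(t)}^{2}+2\norm{\boldsymbol{\chi}_{h}(t)}^{2},
\end{equation*}
so it suffices to bound the Ritz-projection part $\boldsymbol{\rho}_{h}={\bf T}_{h}{\bf E}-\mathbb{R}_{h}{\bf E}$ and the discrete part $\boldsymbol{\chi}_{h}=\mathbb{R}_{h}{\bf E}-{\bf E}_{h}$ separately. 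Each bound is then taken in ${\bf L}^{\infty}(0,T;{\bf L}^{2}(\Omega))$.

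For $\boldsymbol{\rho}_{h}$, we apply Lemma \ref{ritzestimate} pointwise in time with $({\bf z},q)=({\bf E}(t),P(t))$. This is estimate \eqref{rho}, in which the exponent should read $\delta+k$. It gives
\begin{equation*}
\norm{\boldsymbol{\rho}_{h}(t)}^{2}\le Ch^{2(\delta+k)}\left(\norm{{\bf E}(t)}_{1+k}^{2}+\norm{P(t)}_{k}^{2}\right).
\end{equation*}
Taking the essential supremum over $t$ produces the first term of \eqref{SemOptL2}.

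This step needs ${\bf E}\in {\bf L}^{\infty}({\bf H}^{1+k}(\Omega))$ and $P\in L^{\infty}(H^{k}(\Omega))$. Both follow from the assumptions of Lemma \ref{chithm}, since $H^{2}(0,T;\mathcal{B})\hookrightarrow C([0,T];\mathcal{B})$. These assumptions also guarantee that the pair $({\bf E}(t),P(t))$ is admissible data for the Ritz problem \eqref{Ritz} at every $t$.

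For $\boldsymbol{\chi}_{h}$, we invoke Lemma \ref{chithm} directly. Since $\epsilon$ is a positive constant, $\norm{\boldsymbol{\chi}_{h}}^{2}\le \epsilon^{-1}\norm{\epsilon^{1/2}\boldsymbol{\chi}_{h}}^{2}$, and \eqref{chi} gives the second term of \eqref{SemOptL2}, with the factor $\epsilon^{-1}$ absorbed into $C$.

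Adding the two bounds and taking the supremum over $t\in[0,T]$ completes the argument. The only delicate point is to check that the time-regularity constants coincide with those in Lemma \ref{chithm}; there is no genuine obstacle, since the hard work (the energy argument for $\boldsymbol{\chi}_{h}$, using $\boldsymbol{\chi}_{h}(0)=\boldsymbol{\chi}_{ht}(0)=0$ and the divergence-free-type relation \eqref{sL22}) is already contained in Lemma \ref{chithm}.
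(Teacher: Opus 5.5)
Your proposal is correct and follows the paper's proof: split ${\bf e}_{h}=\boldsymbol{\rho}_{h}+\boldsymbol{\chi}_{h}$, bound $\boldsymbol{\rho}_{h}$ pointwise in time by Lemma \ref{ritzestimate} (estimate \eqref{rho}, whose exponent you rightly read as $\delta+k$), bound $\boldsymbol{\chi}_{h}$ by Lemma \ref{chithm}, and take the essential supremum over $t$. Your remarks on absorbing $\epsilon^{-1}$ into $C$ and on the embedding into $C([0,T];\mathcal{B})$ only make explicit what the paper leaves implicit.
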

	\begin{proof}
		Combining \eqref{rho} and \eqref{chi}, we can determine
		\begin{eqnarray}
			\norm{{\bf e}_{h}}^{2} &=&\norm{\boldsymbol{\rho}_{h}+\boldsymbol{\chi}_{h}}^{2}\nonumber\\
			&\le&C\norm{\boldsymbol{\rho}_{h}}^{2}+C\norm{\boldsymbol{\chi}_{h}}^{2}\nonumber\\
			&\le&Ch^{2(\delta+k)}\left(\norm{{\bf E}}_{1+k}^{2}+\norm{P}_{k}^{2}\right)\nonumber\\
			&&+CTh^{2(\delta+k)}\left(\int_{0}^{T}\norm{{\bf E}_{ss}}_{1+k}^{2}ds+\int_{0}^{T}\norm{P_{ss}}_{k}^{2}ds\right).~~~~~~~~
		\end{eqnarray}
		Taking the essential supremum of above inequality from $t\in (0,T]$, we get the desired optimal ${\bf L}^{\infty}({\bf L}^{2}(\Omega))$-norm error estimate \eqref{SemOptL2}.
	\end{proof}
	\section{\normalsize Convergence analysis under lower regularity assumptions}
In the previous section, we discussed the convergence of the electric field error under the assumption that \({\bf E}(t) \in {\bf H}^{1+k}(\Omega)\), where the domain \(\Omega\) is convex and is discretized using polygonal or polyhedral elements. In this section, we will examine the convergence of the numerical solution constructed through the scheme \eqref{sdalg1}-\eqref{sdalg4} specifically in cases where \(\Omega\) may not be convex.

For every domain \(\Omega\), there exists a regularity exponent \(\theta \in \left(\frac{1}{2}, 1\right]\) (with \(\theta = 1\) for convex domains). We assume that the solution of equation \eqref{model3} posed in \(\Omega\) satisfies \({\bf E}(t) \in {\bf H}^{\chi+\theta}(\Omega)\), where \(\chi > \frac{1}{2}\) (see Corollary 3.6 of Houston et al. \cite{houston2005interior}). We denote \(\chi + \theta = 1 + s\) for some \(s > 0\). Utilizing the inf-sup condition provided in Lemma A.1 of \cite{duttamohapatra2025} (which holds only for the case when the domain $\Omega$ is partitioned into simplices), we can relax the regularity assumptions on the true solution from Theorems \ref{SemHcurl} and \ref{optL2}, as well as Corollary \ref{fullest}. Under the assumptions \(({\bf E}(t), P(t)) \in \left({\bf H}^{1+s}(\Omega) \cap {\bf H}_{0}(\mathrm{curl};\Omega)\right) \times \left(H^{1}_{0}(\Omega) \cap H^{r}(\Omega)\right)\), with \(s > 0\) and \(1 \leq r \leq k\), we establish the following convergence results which are valid only for simplicial mesh partitions of $\Omega$.
	
		\begin{thm}\label{SemHcurllow}
		Suppose the true solution of model \eqref{model3} regular such that
		\begin{eqnarray*}
			&&{\bf E}\in {\bf H}^{3}({\bf H}^{1+s}(\Omega)\cap {\bf H}_0(\mathrm{curl};\Omega)),\\
			&&P\in H^{2}(H^{r}(\Omega)\cap H^{1}_{0}(\Omega)).
		\end{eqnarray*}
		Then there exists a constant, independent of mesh size $h$, such that we have the following error estimates
		\begin{eqnarray*}
			\mathrm{ess}\sup_{t\in[0,T]}\tnorm{{\bf e}_{h}(t)}_{1}^{2}
			&\le&	Ch^{2\min{\{s,r\}}}\left(\norm{{\bf E}}_{{\bf H}^{3}({\bf H}^{1+s}(\Omega))}^{2}+\norm{P}_{H^{2}(H^{r}(\Omega))}^{2}\right),\nonumber\\
			\mathrm{ess}\sup_{t\in[0,T]}\norm{\tilde{e}_{0}(t)}^{2}
			&	\le&	Ch^{2\min{\{s,r\}}}\left(\norm{{\bf E}}_{{\bf H}^{3}({\bf H}^{1+s}(\Omega))}^{2}+\norm{P}_{H^{2}(H^{r}(\Omega))}^{2}\right).\nonumber\\
		\end{eqnarray*}
		Here $s>0$ and $1\le r\le k$.
	\end{thm}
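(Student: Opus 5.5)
The plan is to repeat the energy argument of Theorem \ref{SemHcurl} and Corollary \ref{fullest} with reduced regularity. The one structural change is that every residual bound from Lemma \ref{bounds} is replaced by its low-regularity version, and the inf-sup argument uses the simplicial inf-sup condition of Lemma A.1 in \cite{duttamohapatra2025}.

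\emph{Step 1: low-regularity residual and initial bounds.} I will first redo the residual bounds using Lemma \ref{approximation} with $s\in(0,k]$ for ${\bf E}$ and exponent $r$ for $P$, together with the trace inequality \eqref{trace}. The target bounds are
\begin{eqnarray*}
&&|\mathcal{M}_l({\bf E},{\bf v}_h)|+|\mathcal{S}({\bf T}_h{\bf E},{\bf v}_h)|\le Ch^{s}\norm{{\bf E}}_{1+s}\tnorm{{\bf v}_h},\quad l=1,2,\\
&&|\mathcal{M}_3(P,{\bf v}_h)|\le Ch^{r}\norm{P}_{r}\tnorm{{\bf v}_h},\qquad |\mathcal{M}_4({\bf E},q_h)|\le Ch^{1+s}\norm{{\bf E}}_{1+s}\tnorm{q_h}_0 .
\end{eqnarray*}
These hold because the estimates only need the projection errors of order $h^{s}$ and $h^{r}$, and $s>0$ makes the traces meaningful. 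In the same way, the energy Ritz estimate of Lemma \ref{ritzestimate} becomes $\tnorm{{\bf T}_h{\bf z}-\mathbb{R}_h{\bf z}}_1\le Ch^{\min\{s,r\}}(\norm{{\bf z}}_{1+s}+\norm{q}_r)$. I get this by repeating the Strang-type argument of \cite{duttamohapatra2025} with the simplicial inf-sup condition. This fixes $\tnorm{{\bf e}_h(0)}$ and $\tnorm{{\bf e}_{ht}(0)}$, and it gives $\norm{{\bf e}_{ht}(0)}$ at order $h^{\min\{s,r\}}$, which is all that is needed.

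\emph{Step 2: rerun the two energy identities.} With ${\bf v}_h={\bf e}_{ht}$, $q_h=\tilde e_h$, and then with the time-differentiated system tested by ${\bf e}_{htt}$, $\tilde e_{ht}$, I integrate the residual terms by parts in time exactly as in \eqref{err3}--\eqref{err6} and \eqref{err11}--\eqref{err14}. The term $\mathcal{M}_4$ needs care. I convert $\tnorm{\tilde e_h}_0$ to $\norm{\tilde e_h}$ through Lemma \ref{energyinverse}, which costs $h^{-1}$ and leaves $h^{s}\norm{{\bf E}_{tt}}_{1+s}\norm{\tilde e_h}$. The initial data use ${\bf e}_{htt}(0)=0$ from \eqref{initialsecond}. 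After Young's inequality and Lemmas \ref{g1} and \ref{g2}, I expect analogues of \eqref{err88} and \eqref{err15} with $h^{2k}$ replaced by $h^{2\min\{s,r\}}$ and the norms replaced by $\norm{{\bf E}}_{{\bf H}^3({\bf H}^{1+s})}$ and $\norm{P}_{H^2(H^r)}$.

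\emph{Step 3: close the potential estimate, then the ${\bf L}^2$ part.} The inf-sup test function gives $\norm{\tilde e_h}^2\le C\norm{{\bf e}_{htt}}^2+C\tnorm{{\bf e}_h}^2+Ch^{2s}\norm{{\bf E}}_{1+s}^2+Ch^{2r}\norm{P}_r^2$. Inserting the two energy bounds and applying Gronwall (Lemma \ref{g1}) closes the estimate for $\norm{\tilde e_h}$. Substituting back yields the energy bound. The ${\bf L}^2$ part of $\tnorm{{\bf e}_h}_1$ then follows as in the remark preceding Corollary \ref{fullest}, by writing ${\bf e}_h(t)={\bf e}_h(0)+\int_0^t{\bf e}_{hs}\,ds$.

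The main obstacle is Step 1, specifically the bound on $\mathcal{M}_2$ and the low-regularity Ritz estimate. These require the trace of $\nabla\times{\bf E}-\mathcal{T}_h\nabla\times{\bf E}$ with ${\bf E}$ only in ${\bf H}^{1+s}$ and $s$ possibly below $1/2$. I would handle this with a fractional trace inequality on simplices, $\norm{w}_{\partial K}\le C(h_K^{-1/2}\norm{w}_K+h_K^{s-1/2}|w|_{s,K})$. This is why the result is restricted to simplicial meshes, where that inequality and the inf-sup condition of Lemma A.1 are available.
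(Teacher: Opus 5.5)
Your plan follows the same route as the paper. The paper's proof is only the remark that one repeats the argument of Theorem \ref{SemHcurl}, with the approximation properties of Lemma \ref{approximation} taken at orders $h^{s}$ and $h^{r}$ and with the simplicial inf-sup condition (Lemma A.1 of \cite{duttamohapatra2025}) replacing Lemma \ref{infsup}. For $s>1/2$ your three steps carry this out correctly.

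\textbf{The gap.} It sits at the point you flag, and your proposed fix does not work for $0<s\le 1/2$, a range the statement allows. The inequality $\norm{w}_{\partial K}\le C(h_K^{-1/2}\norm{w}_K+h_K^{s-1/2}|w|_{s,K})$ holds only for $s>1/2$. For $s\le 1/2$, $C_c^\infty(K)$ is dense in $H^{s}(K)$, so no bounded trace map $H^{s}(K)\to L^{2}(\partial K)$ exists. Concretely, $w_n=(1-n\,\mathrm{dist}(x,F))_+$ has $\norm{w_n}_F$ fixed while $\norm{w_n}_{s,K}\to 0$ for $s<1/2$. Consequently:
\begin{itemize}
\item The face integrals of $\alpha\nabla\times{\bf E}-\mathcal{T}_h(\alpha\nabla\times{\bf E})$ in $\mathcal{M}_2$ are not controlled by $\norm{{\bf E}}_{1+s}$; they are not even defined as $L^{2}(\partial K)$ pairings.
\item The same defect affects the face-by-face splitting and the ``continuity of flux'' step in the proof of Lemma \ref{semiderrorequation}.
\item It also affects the consistency term in the Strang argument you invoke for the low-regularity Ritz estimate.
\end{itemize}
Your remark in Step 1 that $s>0$ makes the traces meaningful is correct only for the terms involving ${\bf E}-{\bf T}_h{\bf E}$, because ${\bf E}\in{\bf H}^{1}$. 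The paper's one-line remark is silent on this issue.

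\textbf{What would close it.} Covering $0<s\le 1/2$ needs an additional ingredient. One option is to use the equation itself: $\nabla\times(\alpha\nabla\times{\bf E})={\bf F}+\epsilon\nabla P-\epsilon{\bf E}_{tt}\in{\bf L}^{2}(\Omega)$, so $\alpha\nabla\times{\bf E}\in{\bf H}(\mathrm{curl};\Omega)$. You can then define and bound the face pairings against piecewise polynomials by a Green's-formula lifting argument that exploits $H^{s}\subset L^{p}$ for some $p>2$, in the spirit of the low-regularity analyses of Ern and Guermond. This gives a bound of the form $C\big(h^{s}\norm{{\bf E}}_{1+s}+h\norm{\nabla\times(\alpha\nabla\times{\bf E})}\big)\tnorm{{\bf v}_h}$. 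The second term is not among the norms on the right-hand side of the stated estimate; trading it via the equation brings in $\norm{{\bf F}}$. The other option is to restrict the theorem to $s>1/2$, where your argument is complete.

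Finally, the fractional trace inequality for $s>1/2$ holds on any shape-regular polytopal mesh. The restriction to simplices comes solely from the inf-sup condition of Lemma A.1 in \cite{duttamohapatra2025}, not from the trace step.
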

		\begin{thm}\label{optL2low}
		Let the true solution of model~\eqref{model3} be such regular such that
		\begin{eqnarray*}
			&&{\bf E}\in {\bf H}^{2}({\bf H}^{1+s}(\Omega)\cap {\bf H}_0(\mathrm{curl};\Omega)),\\
			&&P\in H^{2}(H^{r}(\Omega)\cap H^{1}_{0}(\Omega)).
		\end{eqnarray*}
		Then there exists a constant independent of mesh size $h$ such that we have the following error estimate
		\begin{eqnarray*}
			\norm{{\bf e}_{0}}_{{\bf L}^{\infty}({\bf L}^{2}(\Omega))}^{2}
			&\le& Ch^{2(\delta+\min{\{s,r\}})}\left(\norm{{\bf E}}^{2}_{{\bf L}^{\infty}({\bf H}^{1+s}(\Omega))}+\norm{P}^{2}_{L^{\infty}(H^{r}(\Omega))}\right)\nonumber\\
			&&+CTh^{2(\delta+\min{\{s,r\}})}\left(\int_{0}^{T}\norm{{\bf E}_{tt}}_{1+s}^{2}dt+\int_{0}^{T}\norm{P_{tt}}_{r}^{2}dt\right).~~~~~~~~
		\end{eqnarray*}
		Here $0<\delta\le 1$ is the Sobolev regularity exponent of solution of the associated dual problem of \eqref{dual}, $\delta>0$ and $1\le r\le k$.
	\end{thm}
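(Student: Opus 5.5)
The plan is to repeat the argument of Lemma \ref{chithm} and Theorem \ref{optL2}, with the smooth-case Ritz estimate of Lemma \ref{ritzestimate} replaced by its low-regularity counterpart on simplicial meshes. Here ${\bf e}_{0}$ denotes the projected error ${\bf e}_{h}={\bf T}_{h}{\bf E}-{\bf E}_{h}$.

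\emph{Step 1: low-regularity Ritz estimate.} Throughout, I use the splitting ${\bf e}_{h}=\boldsymbol{\rho}_{h}+\boldsymbol{\chi}_{h}$, with $\boldsymbol{\rho}_{h}={\bf T}_{h}{\bf E}-\mathbb{R}_{h}{\bf E}$ and $\boldsymbol{\chi}_{h}=\mathbb{R}_{h}{\bf E}-{\bf E}_{h}$. The first task is to establish, for $({\bf z},q)\in({\bf H}^{1+s}(\Omega)\cap{\bf H}_0(\mathrm{curl};\Omega))\times(H^{r}(\Omega)\cap H^1_0(\Omega))$,
\begin{equation*}
\norm{{\bf T}_{h}{\bf z}-\mathbb{R}_{h}{\bf z}}\le Ch^{\delta+\min\{s,r\}}\left(\norm{{\bf z}}_{1+s}+\norm{q}_{r}\right).
\end{equation*}
The energy-type part of this estimate should follow by rerunning the Strang-type argument behind Lemma \ref{ritzestimate}, with three changes:
\begin{itemize}
\item[(a)] the inf-sup condition of Lemma \ref{infsup} is replaced by Lemma A.1 of \cite{duttamohapatra2025}, which is valid on simplices;
\item[(b)] the consistency bounds of Lemma \ref{bounds} are applied with $k$ replaced by $s$ (for ${\bf E}$) and by $r$ (for $P$), using Lemma \ref{approximation} with the fractional index $s\in(0,k]$ together with the trace inequality \eqref{trace};
\item[(c)] as a result, $\tnorm{{\bf T}_{h}{\bf z}-\mathbb{R}_{h}{\bf z}}_{1}\le Ch^{\min\{s,r\}}(\norm{{\bf z}}_{1+s}+\norm{q}_{r})$.
\end{itemize}
The ${\bf L}^{2}$ estimate then follows from the duality (Aubin--Nitsche) argument for \eqref{dual} used in \cite{duttamohapatra2025}. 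The dual solution lies in ${\bf H}^{\delta}$-type spaces, which yields one extra factor $h^{\delta}$.

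\emph{Step 2: bound $\boldsymbol{\chi}_{h}$.} The derivation of \eqref{sL21}--\eqref{sL22} uses only the definition of the Ritz pair \eqref{Ritz}, the scheme \eqref{sdalg1}--\eqref{sdalg2}, and the ${\bf L}^2$-orthogonality of ${\bf T}_h$; none of these depends on regularity. So I again obtain
\begin{equation*}
(\epsilon\boldsymbol{\chi}_{htt},\boldsymbol{\phi}_h)+A(\boldsymbol{\chi}_h,\boldsymbol{\phi}_h)-B(\boldsymbol{\phi}_h,\gamma_h)=-(\epsilon\boldsymbol{\rho}_{htt},\boldsymbol{\phi}_h),\qquad B(\boldsymbol{\chi}_h,\psi_h)=0.
\end{equation*}
Testing with $\boldsymbol{\phi}_h=\boldsymbol{\chi}_{ht}$ and $\psi_h=\gamma_h$ gives the energy identity \eqref{sL23}. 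The initial data vanish, $\boldsymbol{\chi}_h(0)=\boldsymbol{\chi}_{ht}(0)=0$, because of \eqref{sdalg3}--\eqref{sdalg4}. Since $\mathbb{R}_h$ is linear and time-independent, it commutes with $\partial_{tt}$. Applying Step 1 to $({\bf E}_{tt},P_{tt})$ therefore gives $\norm{\boldsymbol{\rho}_{htt}}\le Ch^{\delta+\min\{s,r\}}(\norm{{\bf E}_{tt}}_{1+s}+\norm{P_{tt}}_{r})$. Combining this with Cauchy--Schwarz, Young's inequality and Lemma \ref{g1} bounds $\norm{\epsilon^{1/2}\boldsymbol{\chi}_{ht}}$, and integrating in time from $\boldsymbol{\chi}_h(0)=0$ gives
\begin{equation*}
\norm{\epsilon^{\frac12}\boldsymbol{\chi}_h(t)}^2\le CTh^{2(\delta+\min\{s,r\})}\left(\int_0^T\norm{{\bf E}_{tt}}_{1+s}^2dt+\int_0^T\norm{P_{tt}}_{r}^2dt\right).
\end{equation*}

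\emph{Step 3: conclude.} By the triangle inequality, $\norm{{\bf e}_0}^2\le 2\norm{\boldsymbol{\rho}_h}^2+2\norm{\boldsymbol{\chi}_h}^2$. The first term is bounded by Step 1 applied pointwise in $t$; taking the supremum over $t$ gives the $L^\infty$-in-time norms on the right-hand side of the statement. The second term is bounded by Step 2.

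The main obstacle is Step 1, specifically the duality argument under reduced regularity. The primal error is only $O(h^{\min\{s,r\})}$ in the energy norm, and the dual solution has only $\delta$ extra smoothness. One must check that every consistency term of type $\mathcal{M}_1$--$\mathcal{M}_4$ applied to the dual solution produces a factor $h^{\delta}$. Moreover, the resulting pairings $h^{\delta}\cdot h^{\min\{s,r\}}$ must not require ${\bf H}^{1+k}$ smoothness anywhere. Because the interpolation in Lemma \ref{approximation} is uniform in $s\in[0,k]$, this should work. The simplicial inf-sup (Lemma A.1) is what keeps the constants uniform, and I would verify the dual stability first.
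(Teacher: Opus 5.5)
Your proposal follows the paper's route. The paper gives no separate proof of Theorem~\ref{optL2low}; it only remarks that the result follows from the arguments of Lemma~\ref{chithm} and Theorem~\ref{optL2} once the approximation estimates of Lemma~\ref{approximation} are adjusted to ${\bf H}^{1+s}$/$H^{r}$ regularity and Lemma~\ref{infsup} is replaced by the simplicial inf-sup condition (Lemma A.1 of \cite{duttamohapatra2025}). That is exactly your splitting ${\bf e}_h=\boldsymbol{\rho}_h+\boldsymbol{\chi}_h$, followed by the energy argument for $\boldsymbol{\chi}_h$ and the triangle inequality. You correctly single out the low-regularity Ritz bound $\norm{{\bf T}_h{\bf z}-\mathbb{R}_h{\bf z}}\le Ch^{\delta+\min\{s,r\}}(\norm{{\bf z}}_{1+s}+\norm{q}_{r})$ as the real content; the paper also only asserts this bound, so your sketch matches, and slightly exceeds, the paper's level of detail.
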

	\begin{rem}
		The proofs of Theorems \ref{SemHcurllow} and \ref{optL2low} follow analogously as the proofs of Theorems \ref{SemHcurl} and \ref{optL2} by appropriately tweaking the approximation properties (cf. Lemma \ref{approximation}) and using the discrete inf-sup condition Lemma A.1 of \cite{duttamohapatra2025}, which holds only for simplicial partitions of domain $\Omega$, in place of Lemma \ref{infsup}.
	\end{rem}
	\begin{rem}
		Consider the following Maxwell model (cf. \cite{qi2025decoupled}): 
		\begin{eqnarray}\label{model4}
			\left\{
			\begin{array}{ll}
				\epsilon{\bf E}_{tt}+\sigma{\bf E}_{t}+\nabla\times (\alpha\nabla\times {\bf E})-\nabla P={\bf F}\; \mbox{in}~\Omega\times (0,T), \\
				\nabla\cdot(\epsilon {\bf E}) = \rho\; \mbox{in}~\Omega\times (0,T), \\
				{\bf E}({\bf x},0)={\bf u}({\bf x}),\,\,
				{\bf E}_t({\bf x},0)={\bf v}({\bf x})\;\;\mbox{on}\; \Omega,\\
				\boldsymbol{\eta}\times{\bf E}=0,\;P=0\;\mbox{on} ~\partial \Omega\times (0,T).
			\end{array}
			\right.
		\end{eqnarray}
		Here, the coefficient $\sigma$ is a non-negative function of position known as the conductivity of the medium. Note that for $\sigma=0$, models \eqref{model3} and \eqref{model4} are the same.
		
		\noindent\rule{\textwidth}{0.4pt}
		\noindent{\bf The semi-discrete parameter-free dG algorithm 2:}
		For $t>0,$  find $({\bf E}_{h}(t),P_{h}(t))\in{\bf V}_{h}^{0}\times Q_{h}^{0}$ satisfying
		\begin{eqnarray}
			&&(\epsilon{\bf E}_{htt},{\bf v}_h)+(\sigma{\bf E}_{ht},{\bf v}_h)+A({\bf E}_{h},{\bf v}_h)-B({\bf v}_{h},P_h)=({\bf F},{\bf v}_h)\;\forall{\bf v}_h\in{\bf V}_h^0,~~~~~~~\label{sdalg11}\\
			&&B({\bf E}_{h},q_h)=-(\rho,q_{h})\; \forall q_h\in Q_{h}^{0},\label{sdalg21}\\
			&&{\bf E}_h({\bf x},0)=\mathbb{R}_h{\bf u}({\bf x}),\,{\bf x}\in\Omega,\label{sdalg31}\\
			&&{\bf E}_{ht}({\bf x},0)=\mathbb{R}_h{\bf v}({\bf x}),\,{\bf x}\in\Omega.\label{sdalg41}
		\end{eqnarray}	
		\noindent\rule{\textwidth}{0.4pt}
		The error estimates in Theorems \ref{SemHcurl}, \ref{SemHcurllow}, \ref{optL2} and \ref{optL2low} presented for the model \eqref{model3} and parameter-free dG algorithm 1 will hold similarly for the model \eqref{model4} and parameter-free dG algorithm 2. The numerical validation of the above semi-discrete problem is presented in Examples \ref{Ex5}, \ref{Ex3}.
	\end{rem}
	\section{\normalsize Numerical computations}\label{sec5}
	
	In this section, we introduce implicit numerical schemes for the model described in \eqref{model4} (or model \eqref{model3} when \(\sigma=0\)). We partition the interval \([0, T]\) into sub intervals \([t_{n-1}, t_{n}]\), where \(t_{n} = n\tau\) for \(n \in \{0, 1, 2, \ldots, N\}\) and \(\tau = \frac{T}{N}\). For a sequence \(\{\xi^{n}\} \subset {\bf L}^{2}(\Omega)\) or \(\{\xi^{n}\} \subset L^{2}(\Omega)\), we define the following difference quotients and operators.
	\begin{eqnarray*}
		\left\{
		\begin{array}{ll}
			\delta_{\tau}\xi^{n}=\frac{\xi^{n+1}-\xi^{n}}{\tau},\,\partial_{\tau}\xi^{n}=\frac{\xi^{n+1}-\xi^{n-1}}{2\tau},\, 	\partial_{\tau}^{2}\xi^{n}=\frac{\xi^{n+1}-2\xi^{n}+\xi^{n-1}}{\tau^{2}},\,\overline{\xi}^{n}=\frac{\xi^{n+1}+\xi^{n}}{2}.
		\end{array}
		\right.
	\end{eqnarray*}
	Furthermore, for a continuous function in the time direction $\beta:[0,T]\to {\bf L}^{2}(\Omega)$ or $\beta:[0,T]\to L^{2}(\Omega)$, we define $\beta^{n}:=\beta(\cdot,t_{n})$, where $n\in\{0,1,2,\cdots,N\}$. Following the notations described above, we now describe two implicit complete discrete schemes as follows:

	\noindent\rule{\textwidth}{0.4pt}
	\noindent{\bf The complete discrete parameter-free dG scheme 1 (CPDG 1):}
	
	For $n\in\{1,2,3,\cdots N-1\}$, we seek $(\mathtt{E}_{h}^{n+1},\mathtt{P}_{h}^{n+1})\in{\bf V}_{h}^{0}\times Q_{h}^{0}$ satisfying:
	\begin{eqnarray}
		&&(\epsilon\partial_{\tau}^{2}\mathtt{E}_{h}^{n},{\bf v}_h)+(\sigma\delta_{\tau}\mathtt{E}_{h}^{n},{\bf v}_{h})+A(\mathtt{E}_{h}^{n+1},{\bf v}_h)-B({\bf v}_{h},\mathtt{P}_{h}^{n+1})=({\bf F}^{n+1},{\bf v}_h)\;\forall{\bf v}_h\in{\bf V}_h^0,~~~~~~~~\label{fdalg1}\\
		&&B(\mathtt{E}_{h}^{n+1},q_h)=-(\rho^{n+1},q_{h})\; \forall q_h\in Q_{h}^{0},\label{fdalg2}\\
		&&\mathrm{with}\nonumber\\
		&&\mathtt{E}_{h}^{0}=\mathbb{R}_h{\bf u}(\textbf{x}),\,\mathrm{and}\,\,\mathtt{E}_{h}^{1}=\mathtt{E}_{h}^{0}+\tau\mathbb{R}_{h}{\bf v}({\bf x}).\label{fdalg3}
	\end{eqnarray}
	\noindent\rule{\textwidth}{0.4pt}
	
	\noindent\rule{\textwidth}{0.4pt}
	\noindent{\bf The complete discrete parameter-free dG scheme 2 (CPDG2):}
	
	For $n\in\{1,2,3,\cdots N-1\}$, we seek $(\mathtt{E}_{h}^{n+1},\mathtt{P}_{h}^{n+1})\in{\bf V}_{h}^{0}\times Q_{h}^{0}$ satisfying:
	\begin{eqnarray}
		&&(\epsilon\partial_{\tau}^{2}\mathtt{E}_{h}^{n},{\bf v}_h)+(\sigma\partial_{\tau}\mathtt{E}_{h}^{n},{\bf v}_{h})+A(\overline{\mathtt{E}}_{h}^{n+1},{\bf v}_h)-B({\bf v}_{h},\mathtt{P}_{h}^{n+1})=(\overline{{\bf F}}^{n+1},{\bf v}_h)\;\forall{\bf v}_h\in{\bf V}_h^0,~~~~~~~~\label{fdalg4}\\
		&&B(\overline{\mathtt{E}}_{h}^{n+1},q_h)=-(\overline{\rho}^{n},q_{h})\; \forall q_h\in Q_{h}^{0},\label{fdalg5}\\
		&&\mathrm{with}\nonumber\\
		&&\mathtt{E}_{h}^{0}=\mathbb{R}_h{\bf u}(\textbf{x}),\,\mathrm{and}\,\,\mathtt{E}_{h}^{1}=\mathtt{E}_{h}^{-1}+2\tau\mathbb{R}_{h}{\bf v}({\bf x}).\label{fdalg6}
	\end{eqnarray}
	\noindent\rule{\textwidth}{0.4pt}
	
	Now we present numerical experiments to validate the theoretical estimates. The 2D and 3D domains are uniformly partitioned using triangular and tetrahedral meshes, respectively. The electric field error $\mathtt{E}_{h}^{N}-{\bf E}^{N}$ is assessed at the final time \( t = t_{N} = T \) and is measured in terms of discrete energy and \( \mathbf{L}^{2} \) norms and the potential error $\mathtt{P}_{h}^{N}-P^{N}$ at the final time-step is measured in the $L^{2}$-norm. The estimated order of convergence is computed using the following formula:
	
	\[
	\text{order} = \frac{\log(e_{i+1}^{N}/e_{i}^{N})}{\log(h_{i+1}/h_{i})}.
	\]
	
	\noindent Here \( e_{j}^{N} \) and \( h_{j} \) (for \( j = i, i+1 \)) represent the error at the final time \( t = T \), measured in appropriate norms, and the mesh size at the \( j^{th} \) iteration, respectively. All the computations in 2D/3D are performed using polynomials of degree $k=1,2$.
	\begin{exm}\label{Ex1}
		\rm{  Consider the Maxwell problem \eqref{model4} in a two dimensional domain $\Omega=(0,1)^{2}$ with the final time $T=1$. The corresponding parameters are selected as $\alpha=\epsilon=1$ and $\sigma=0$.
			Further the exact solution (electric field) is chosen as		 
			\begin{equation*}
				{\bf E}(\textbf{x},t)=t^2\exp(-t)(10x^2(x-1)^2y(y-1)(2y-1),10x(x-1)(2x-1)y^2(y-1)^2).
			\end{equation*}
			This particular choice enforces that $\rho=\nabla\cdot{\bf E}=0$. The potential function $P$ is selected as 
			\begin{equation*}
				P=t^2\exp(t)\sin(2\pi x)\sin(2\pi y).
			\end{equation*} Forcing term and initial data can be extracted from the choice of exact solution.
			\begin{table}[!ht]
				\centering
				\renewcommand{\arraystretch}{1.1}
				\setlength{\tabcolsep}{5pt}
				\begin{tabular}{c c c c c c c c}
					\hline
					$k$ & $h$ & $\norm{{\bf E}^{N} - \mathtt{E                                                                                                                                                                                                                                                                                                                                                                                                                                                                                                                                                                                                                                                                                                                              }_{h}^{N}}$ & Order & 
					$\tnorm{{\bf E}^{N} - \mathtt{E}_{h}^{N}}_{1}$ & Order & 
					$\norm{P^{N}-\mathtt{P}_{h}^{N}}$ & Order \\
					\hline
					& 1/4  & 8.29e-02  & --- & 9.45e-02  & ---   & 6.65e-01 & ---  \\
					1 & 1/8  & 1.84e-02  & 2.17 & 2.75e-02  & 1.78  & 3.53e-01  & 0.91 \\
					& 1/16 & 4.01e-03  & 2.20 & 1.62e-02  & 0.77  & 1.79e-01  & 0.98\\
					& 1/32 & 9.49e-04  & 2.08 & 9.38e-03  & 0.79  & 8.98e-02  & 1.00 \\
					\hline
					& 1/4  & 1.65e-02  & --- & 2.40e-02  & ---  & 2.00e-01  & ---  \\
					2 & 1/8  & 2.93e-03  & 2.49 & 7.25e-03  & 1.73  & 5.33e-02  & 1.91 \\
					& 1/16 & 4.36e-04  & 2.75 & 1.99e-03  & 1.87  & 1.35e-02  & 1.98 \\
					& 1/32 & 5.94e-05  & 2.88 & 5.13e-04  & 1.95 & 3.40e-03 & 1.99\\
					\hline
				\end{tabular}
				\caption{Errors and convergence orders obtained using ${\bf CPDG1}$ scheme measured in different norms for Example \ref{Ex1} with $\tau=h^{\frac{k+1}{2}}$.}
				\label{Tab1}
			\end{table}
			\begin{table}[!ht]
				\centering
				\renewcommand{\arraystretch}{1.1}
				\setlength{\tabcolsep}{5pt}
				\begin{tabular}{c c c c c c c c}
					\hline
					$k$ & $h$ & $\norm{{\bf E}^{N} - \mathtt{E                                                                                                                                                                                                                                                                                                                                                                                                                                                                                                                                                                                                                                                                                                                              }_{h}^{N}}$ & Order & 
					$\tnorm{{\bf E}^{N} - \mathtt{E}_{h}^{N}}_{1}$ & Order & 
					$\norm{P^{N}-\mathtt{P}_{h}^{N}}$ & Order \\
					\hline
					& 1/4  & 8.11e-02  & --- & 9.27e-02  & ---   & 6.65e-01 & --- \\
					1 & 1/8  & 1.84e-02  & 2.14 & 2.75e-02  & 1.75  & 3.53e-01  & 0.91 \\
					& 1/16 & 4.00e-03  & 2.20 & 1.62e-02  & 0.76  & 1.79e-01  & 0.98\\
					& 1/32 & 9.45e-04  & 2.08 & 9.37e-03  & 0.79  & 8.98e-02  & 1.00 \\
					\hline
					& 1/2  & 8.88e-02  & ---  & 1.07e-01  & ---   & 7.01e-01  & ---  \\
					& 1/4  & 1.63e-02  & 2.45 & 2.38e-02  & 2.16  & 2.00e-01  & 1.81  \\
					2 & 1/8  & 2.93e-03  & 2.48 & 7.22e-03  & 1.72  & 5.33e-02  & 1.91 \\
					& 1/16 & 4.33e-04  & 2.76 & 1.96e-03  & 1.88  & 1.35e-02  & 1.98 \\
					\hline
				\end{tabular}
				\caption{Errors and convergence orders obtained using ${\bf CPDG1}$ scheme measured in different norms for Example \ref{Ex1} with $\tau=h^{k+1}$.}
				\label{Tab2}
			\end{table}
			\begin{table}[!ht]
				\centering
				\renewcommand{\arraystretch}{1.1}
				\setlength{\tabcolsep}{5pt}
				\begin{tabular}{c c c c c c c c}
					\hline
					$k$ & $h$ & $\norm{{\bf E}^{N} - \mathtt{E                                                                                                                                                                                                                                                                                                                                                                                                                                                                                                                                                                                                                                                                                                                              }_{h}^{N}}$ & Order & 
					$\tnorm{{\bf E}^{N} - \mathtt{E}_{h}^{N}}_{1}$ & Order & 
					$\norm{P^{N}-\mathtt{P}_{h}^{N}}$ & Order \\
					\hline
					& 1/4  & 8.17e-02  & --- & 9.34e-02  & ---  & 7.53e-01 & ---  \\
					1 & 1/8  & 1.84e-02  & 2.15 & 2.75e-02  & 1.76  & 4.12e-01  & 0.87 \\
					& 1/16 & 4.01e-03  & 2.20 & 1.62e-02  & 0.76  & 2.14e-01  & 0.95\\
					& 1/32 & 9.47e-04  & 2.08 & 9.37e-03  & 0.79  & 1.09e-01  & 0.98 \\
					\hline
					& 1/4  & 1.64e-02  & --- & 2.40e-02  & ---  & 4.31e-01  & --- \\
					2 & 1/8  & 2.93e-03  & 2.48 & 7.23e-03  & 1.73  & 1.30e-01  & 1.72 \\
					& 1/16 & 4.34e-04  & 2.75 & 1.97e-03  & 1.88  & 3.41e-02  & 1.93 \\
					& 1/32 & 5.79e-05  & 2.91 & 5.06e-04  & 1.96 & 8.63e-03  & 1.98\\
					\hline
				\end{tabular}
				\caption{Errors and convergence orders obtained using ${\bf CPDG2}$ scheme measured in different norms for Example \ref{Ex1} with $\tau=h^{\frac{k+1}{2}}$.}
				\label{Tab3}
			\end{table}
			\begin{figure}[!ht]
				\begin{subfigure}{.5\textwidth}
					\centering
					\includegraphics[width=6.00cm, height=6cm]{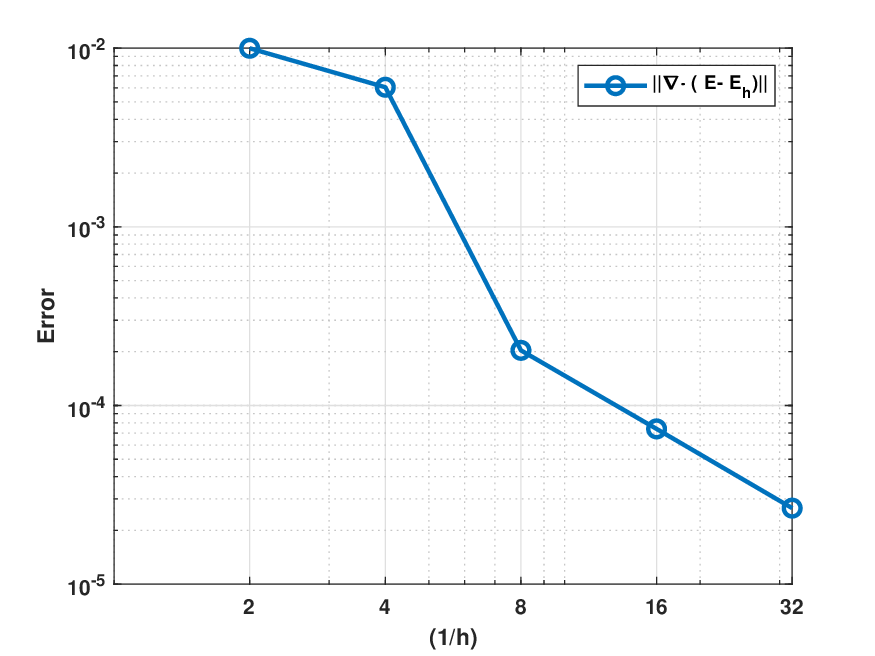}  	  		
				\end{subfigure}
				\begin{subfigure}{.5\textwidth}
					\centering
					\includegraphics[width=6.00cm, height=6cm]{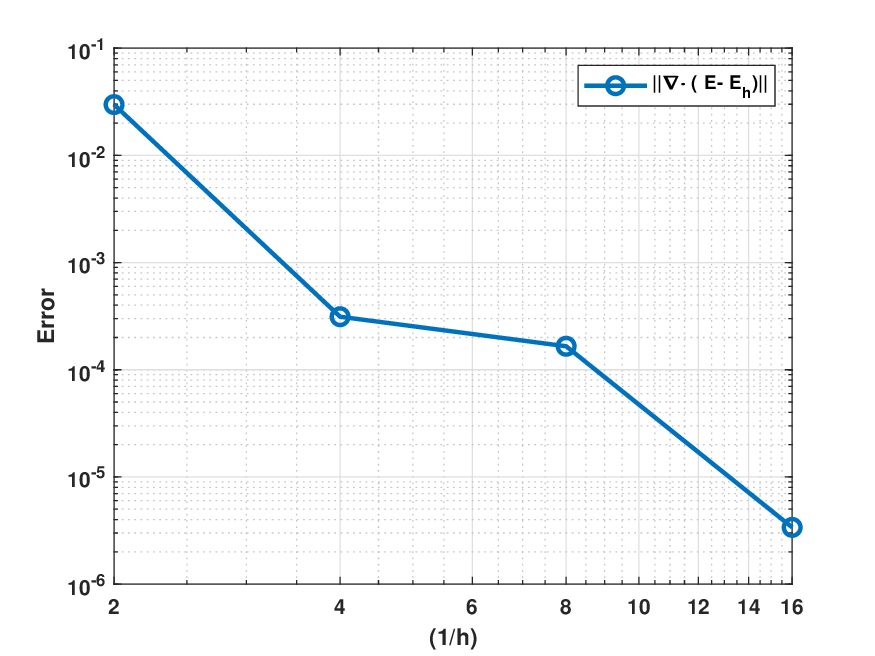}  	
				\end{subfigure}
				\caption{Log-log plots of divergence errors in $L^{2}$-norm (vs.) $(1/h)$, for $k=1$ (left) and $k=2$ (right) for Example \ref{Ex1} computed using {\bf CPDG1} scheme for the case $\tau=h^{k+1}$.}
				\label{Fig1}
			\end{figure}
			
			We now make a few observations from the computational results present in Tables \ref{Tab1}-\ref{Tab3} and Figure \ref{Fig1}.
			\begin{itemize}
				\item From the Tables \ref{Tab1} and \ref{Tab2}, it can be observed that the electric field converges at a rate of $\mathcal{O}(h^{k+1})$ in the ${\bf L}^{2}$ norm and at a rate of $\mathcal{O}(h^{k})$ in discrete energy norms when employing the ${\bf CPDG1}$ (backward Euler) scheme with time steps $\tau=h^{\frac{k+1}{2}}$ and $\tau=h^{k}$, respectively. Furthermore, the potential errors also converge at a rate of $\mathcal{O}(h^{k})$ in $L^{2}$-norm using the aforementioned temporal step sizes.
				
				\item The ${\bf CPDG1}$ scheme has the initial condition $\mathtt{E}_{h}^{1}$ determined using a backward Euler scheme, which is well established in literature to be of a first-order numerical scheme in time. However, we observe from Table \ref{Tab1} that for this example with $\rho=0$ we are getting optimal convergence of the electric field error in ${\bf L}^{2}$-norm even by selecting $\tau=h^{\frac{k+1}{2}}$.
				
				\item Additionally, it is well expected that the ${\bf CPDG2}$ is a Crank-Nicolson type scheme in time and hence is second-order accurate in temporal direction and this can be observed from Table \ref{Tab3} where the time-step $\tau=h^{\frac{k+1}{2}}$ is sufficient for obtaining optimal convergence of the electric field error in ${\bf L}^{2}$-norms. 
				
				\item In Figure \ref{Fig1}, we portray the decrease of divergence errors computed at the final time $t=t_{N}$ using the formula
				\begin{equation*}\norm{\nabla\cdot({\bf E}^{N}-\mathtt{E}_{h}^{N})}^{2}=\sum_{K\in\mathcal{K}_{h}}\norm{\nabla\cdot({\bf E}^{N}-\mathtt{E}_{h}^{N})}_{K}^{2},\end{equation*}
				with respect to the increase in $1/h$ computed using polynomials $k=1,2$. Although the chosen electric field ${\bf E}^{N}$ is exactly divergence-free, the discretely computed electric field $\mathtt{E}_{h}^N$ is not globally divergence-free. To achieve a globally divergence-free solution, one can utilize locally divergence-free polynomial spaces and subsequently project the corresponding dG solution onto its globally divergence-free subspace, as suggested by Cockburn et al. \cite{cockburn2004locally}.
			\end{itemize}
			
		}
	\end{exm}
	\begin{exm}\label{Ex2}
		\rm{On similar lines to Example \ref{Ex1}, we consider the same Maxwell problem but with a different exact solution given as:
			\begin{equation*}
				{\bf E}(\textbf{x},t)=\exp(t)(\sin(\pi x)\sin(\pi y),\sin(\pi x)\sin(\pi y)),\,P=\exp(t)xy(1-x)(1-y).
			\end{equation*}
			This particular choice of ${\bf E}$ enforces that $\rho=\nabla\cdot{\bf E}\ne 0$.
			\begin{table}[!ht]
				\centering
				\renewcommand{\arraystretch}{1.1}
				\setlength{\tabcolsep}{5pt}
				\begin{tabular}{c c c c c c c c}
					\hline
					$k$ & $h$ & $\norm{{\bf E}^{N} - \mathtt{E                                                                                                                                                                                                                                                                                                                                                                                                                                                                                                                                                                                                                                                                                                                              }_{h}^{N}}$ & Order & 
					$\tnorm{{\bf E}^{N} - \mathtt{E}_{h}^{N}}_{1}$ & Order & 
					$\norm{P^{N}-\mathtt{P}_{h}^{N}}$ & Order \\
					\hline
					& 1/2  & 9.80e-01      & ---  & 2.45e+00 & ---   & 7.42e-02       & ---  \\
					& 1/4  & 2.14e-01  & 2.19 & 1.22e+00  & 1.01   & 8.89e-02 &  -0.26 \\
					1 & 1/8  & 4.33e-02  & 2.30 & 6.47e-01  & 0.92  & 3.94e-02  & 1.17 \\
					& 1/16 & 1.44e-02  & 1.59 & 3.34e-01  & 0.96  & 1.37e-02  & 1.52 \\
					\hline
					& 1/2  & 1.92e+00  & ---  & 6.34e+00  & ---   & 9.06e-02  & ---  \\
					& 1/4  & 4.01e-02  & 5.58 & 1.58e-01  & 5.33  & 4.75e-02  & 0.93  \\
					2 & 1/8  & 1.26e-02  & 1.67 & 5.05e-02  & 1.64  & 1.36e-02  & 1.81 \\
					& 1/16 & 3.34e-03  & 1.91 & 1.34e-02  & 1.92  & 3.57e-03  & 1.92 \\
					\hline
				\end{tabular}
				\caption{Errors and convergence orders obtained using ${\bf CPDG1}$ scheme measured in different norms for Example \ref{Ex2} with $\tau=h^{\frac{k+1}{2}}$.}
				\label{Tab4}
			\end{table}
			\begin{table}[!ht]
				\centering
				\renewcommand{\arraystretch}{1.1}
				\setlength{\tabcolsep}{5pt}
				\begin{tabular}{c c c c c c c c}
					\hline
					$k$ & $h$ & $\norm{{\bf E}^{N} - \mathtt{E                                                                                                                                                                                                                                                                                                                                                                                                                                                                                                                                                                                                                                                                                                                              }_{h}^{N}}$ & Order & 
					$\tnorm{{\bf E}^{N} - \mathtt{E}_{h}^{N}}_{1}$ & Order & 
					$\norm{P^{N}-\mathtt{P}_{h}^{N}}$ & Order \\
					\hline
					& 1/2  & 1.23e+00      & ---  & 2.53e+00 & ---   & 2.33e-01       & ---  \\
					& 1/4  & 1.99e-01  & 2.63 & 1.22e+00  & 1.05   & 1.01e-01 & 1.20  \\
					1 & 1/8  & 4.03e-02  & 2.30 & 6.44e-01  & 0.92  & 4.89e-02  & 1.05 \\
					& 1/16 & 8.93e-03  & 2.17 & 3.31e-01  & 0.96  & 1.70e-02  & 1.52 \\
					\hline
					& 1/2  & 2.23e-01  & ---  & 5.25e-01  & ---   & 1.43e-01  & ---  \\
					& 1/4  & 2.75e-02  & 3.02 & 1.27e-01  & 2.05  & 4.36e-02  & 1.71  \\
					2 & 1/8  & 2.56e-03  & 3.43 & 3.12e-02  & 2.02  & 1.20e-02  & 1.87 \\
					& 1/16 & 3.04e-04  & 3.07 & 7.82e-03  & 2.00  & 3.13e-03  & 1.94 \\
					\hline
				\end{tabular}
				\caption{Errors and convergence orders obtained using ${\bf CPDG1}$ scheme measured in different norms for Example \ref{Ex2} with $\tau=h^{k+1}$.}
				\label{Tab5}
			\end{table}
			\begin{table}[!ht]
				\centering
				\renewcommand{\arraystretch}{1.1}
				\setlength{\tabcolsep}{5pt}
				\begin{tabular}{c c c c c c c c}
					\hline
					$k$ & $h$ & $\norm{{\bf E}^{N} - \mathtt{E                                                                                                                                                                                                                                                                                                                                                                                                                                                                                                                                                                                                                                                                                                                              }_{h}^{N}}$ & Order & 
					$\tnorm{{\bf E}^{N} - \mathtt{E}_{h}^{N}}_{1}$ & Order & 
					$\norm{P^{N}-\mathtt{P}_{h}^{N}}$ & Order \\
					\hline
					& 1/2  & 1.28e+00      & ---  & 2.64e+00 & ---   & 2.32e-01       & ---  \\
					& 1/4  & 2.05e-01  & 2.64 & 1.32e+00  & 1.01   & 6.77e-02 & 1.78  \\
					1 & 1/8  & 4.20e-02  & 2.29 & 6.86e-01  & 0.94  & 2.97e-02  & 1.19 \\
					& 1/16 & 1.10e-02  & 1.94 & 3.41e-01  & 1.01  & 4.03e-02  & -0.44 \\
					& 1/32 & 3.93e-03  & 1.48 & 1.69e-01  & 1.02  & 4.66e-02  & -0.21 \\
					\hline
					& 1/2  & 2.22e-01  & ---  & 5.63e-01  & ---   & 1.29e-01  & ---  \\
					& 1/4  & 2.74e-02  & 3.02 & 1.29e-01  & 2.12  & 5.04e-02  & 1.36  \\
					2 & 1/8  & 3.32e-03  & 3.05 & 3.20e-02  & 2.02  & 3.02e-02  & 0.74 \\
					& 1/16 & 5.45e-04  & 2.61 & 7.97e-03  & 2.00  & 2.82e-02  & 0.10 \\
					\hline
				\end{tabular}
				\caption{Errors and convergence orders obtained using ${\bf CPDG2}$ scheme measured in different norms for Example \ref{Ex2} with $\tau=h^{\frac{k+1}{2}}$.}
				\label{Tab6}
			\end{table}
			
			\begin{table}[!ht]
				\centering
				\renewcommand{\arraystretch}{1.1}
				\setlength{\tabcolsep}{5pt}
				\begin{tabular}{c c c c c c c c}
					\hline
					$k$ & $h$ & $\norm{{\bf E}^{N} - \mathtt{E                                                                                                                                                                                                                                                                                                                                                                                                                                                                                                                                                                                                                                                                                                                              }_{h}^{N}}$ & Order & 
					$\tnorm{{\bf E}^{N} - \mathtt{E}_{h}^{N}}_{1}$ & Order & 
					$\norm{P^{N}-\mathtt{P}_{h}^{N}}$ & Order \\
					\hline
					& 1/2  & 1.35e+00      & ---  & 2.60e+00 & ---   & 5.65e-01       & ---  \\
					& 1/4  & 1.94e-01  & 2.81 & 1.22e+00  & 1.09   & 8.44e-01 & -0.58  \\
					1 & 1/8  & 4.06e-02  & 2.25 & 6.44e-01  & 0.93  & 3.16e+00  & -1.90 \\
					& 1/16 & 9.00e-03  & 2.17 & 3.31e-01  & 0.96  & 1.24e+01  & -1.98 \\
					& 1/32 & 2.13e-03  & 2.08 & 1.67e-01  & 0.98  & 4.94e+01  & -1.99 \\
					\hline
					& 1/2  & 2.14e-01  & ---  & 5.26e-01  & ---   & 1.49e-01  & ---  \\
					& 1/4  & 2.83e-02  & 2.92 & 1.28e-01  & 2.04  & 3.55e-01  & -1.25  \\
					2 & 1/8  & 2.46e-03  & 3.52 & 3.17e-02  & 2.01  & 1.66e+00  & -2.22 \\
					& 1/16 & 2.99e-04  & 3.04 & 8.05e-03  & 1.98  & 7.04e+00  & -2.09 \\
					\hline
				\end{tabular}
				\caption{Errors and convergence orders obtained using ${\bf CPDG2}$ scheme measured in different norms for Example \ref{Ex2} with $\tau=h^{k+1}$.}
				\label{Tab7}
			\end{table}
			\begin{figure}[!ht]
				\centering
				\includegraphics[width=7.00cm, height=6cm]{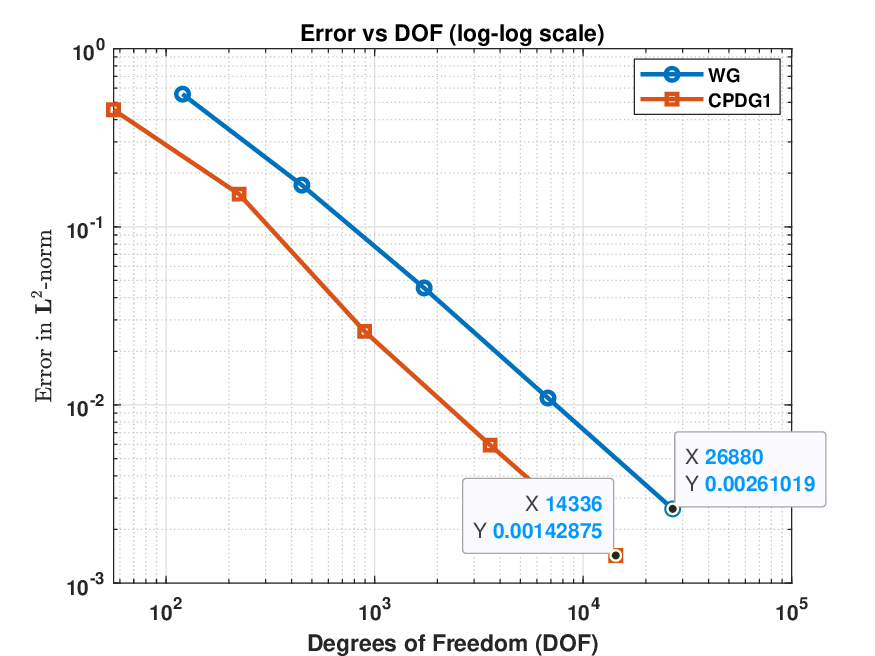}  	  		
				\caption{Log-log plot of electric field error computed in ${\bf L}^{2}$-norms of WG (cf. \cite{qi2025decoupled}) and {\bf CPDG1} vs. the DOFs using linear elements.}
				\label{Fig2}
			\end{figure}
			We summarize the findings of this experiment in Tables \ref{Tab4}-\ref{Tab7} and Figure \ref{Fig2} as follows:
			\begin{itemize}
				\item Unlike the case as in Example \ref{Ex1}, here we have $\rho\ne0$. Hence due to this, in Table \ref{Tab4} by selecting $\tau=h^{\frac{k+1}{2}}$  we observe that the ${\bf CPDG1}$ shows a drop in the order of convergence in the ${\bf L}^{2}$-norm of the error in electric field upon finer mesh refinement.
				
				\item The order is recovered back by selecting the time-step as $\tau=h^{k+1}$ and such phenomenon is observed in Table \ref{Tab5}. This was expected because the ${\bf CPDG1}$ scheme has a first-order initial startup.
				
				\item Additionally, we note that optimal convergence of the errors in the discrete energy norm for the error in electric field and in $L^{2}$-norm for the error in potential function is achieved using the {\bf CPDG1} scheme irrespective of the choice of $\tau$ as $h^{k+1}$ or $h^{\frac{k+1}{2}}$.
				
				\item Although ${\bf CPDG2}$ is a Crank-Nicolson type scheme in the temporal direction, we observe similar convergence phenomena in Tables \ref{Tab6} and \ref{Tab7} as in ${\bf CPDG1}$ (see Tables \ref{Tab4} and \ref{Tab5}). Specifically, it behaves like a first-order scheme in time when we select $\tau = h^{\frac{k+1}{2}}$. The order in the ${\bf L}^{2}$-norm for the error in the electric field can be recovered by choosing $\tau = h^{k+1}$. Radu and Egger noted a similar phenomenon in their work \cite{egger2021second} (refer to numerical test 5.2 in \cite{egger2021second}), where they employed a curl-conforming explicit finite element method with mass lumping to address the non-zero divergence of the electric field. Additionally, a modified $\mathcal{EJ}_{1}^{*}$ Nédélec element (see \cite{egger2021second}) was used to recover the order of convergence.
				
				\item One important drawback of the ${\bf CPDG2}$ scheme is noted in Tables \ref{Tab6} and \ref{Tab7}, where the potential errors diverge and increase with mesh refinement. This phenomenon renders it ineffective for such Maxwell problems when $\rho\ne 0$. 
				
				\item In Figure \ref{Fig2}, we present the log-log plots of ${\bf L}^{2}$-norm error in electric field vs. the degrees of freedom between the proposed ${\bf CPDG1}$ scheme and the backward Euler weak Galerkin (WG) method, as discussed in \cite{qi2025decoupled}. For the ${\bf CPDG1}$ scheme, we utilize piecewise linear polynomials, while the WG method employs the lowest order element specified in \cite{qi2025decoupled}. Both methods achieve a comparable level of accuracy, as evidenced by the errors in the electric field measured in the ${\bf L}^{2}$-norm. However, it is noteworthy that the proposed ${\bf CPDG1}$ scheme requires significantly fewer degrees of freedom (DOF) compared to the WG methods.
				
			\end{itemize}
		}
	\end{exm}
		\begin{exm}\label{Ex5}
		\rm{  Consider the Maxwell problem \eqref{model4} in $\Omega=(0,1)^{2}$ with the final time $T=1$ and $\alpha=\epsilon=\sigma=1$.
			Further the exact solution is chosen as		 
			\begin{equation*}
				{\bf E}=\cos(t)\nabla \phi(r,\theta),\mathrm{with}\, \phi(r,\theta)= r^{1+l}\sin((1+l)\theta),\,0<l\le1.
			\end{equation*}
			The potential function $P$ is selected as 
			\begin{equation*}
				P=\cos(t)\sin(\pi x)\sin(\pi y).
			\end{equation*} 
			Here ${\bf E}(t)\in {\bf H}^{1+\l-\varepsilon}(\Omega)$ where $\varepsilon>0$ is a very small positive number with $l-\varepsilon>0$ and $P$ is smooth.
			\begin{table}[!ht]
				\centering
				\renewcommand{\arraystretch}{1.1}
				\setlength{\tabcolsep}{5pt}
				\begin{tabular}{c c c c c c c c}
					\hline
					$l$ & $h$ & $\norm{{\bf E}^{N} - \mathtt{E                                                                                                                                                                                                                                                                                                                                                                                                                                                                                                                                                                                                                                                                                                                             }_{h}^{N}}$ & Order & 
					$\tnorm{{\bf E}^{N} - \mathtt{E}_{h}^{N}}_{1}$ & Order & 
					$\norm{P^{N}-\mathtt{P}_{h}^{N}}$ & Order \\
					\hline
					& 3.536e-01  & 1.499e-01      & ---  & 3.500e-01 & ---   & 1.634e-01      & ---  \\
					& 1.768e-01  & 7.912e-02  & 0.92 & 2.945e-01  & 0.25   & 1.014e-01 & 0.69  \\
					$\frac{1}{10}$ & 8.839e-02  & 4.349e-02  & 0.86  & 2.568e-01  & 0.20  & 7.691e-02  & 0.40 \\
					& 4.419e-02 & 2.257e-02  & 0.95 & 2.332e-01  & 0.14 & 6.343e-02  & 0.28\\
					& 2.210e-02 & 1.136e-02  & 0.99 & 2.143e-01  & 0.12  & 6.052e-02  & 0.07 \\
					\hline
					& 3.536e-01  & 1.227e-01      & ---  & 1.716e-01 & ---   & 1.507e-01      & ---  \\
					& 1.768e-01  & 1.568e-02   & 2.97 & 7.800e-02  & 1.14   & 7.522e-02 & 1.00  \\
					$\frac{2}{3}$ & 8.839e-02  & 4.454e-03  & 1.82 & 4.041e-02  & 0.95  & 4.402e-02  & 0.77 \\
					& 4.419e-02 & 1.451e-03  & 1.62 & 2.410e-02   & 0.75  & 2.337e-02  & 0.91\\
					& 2.210e-02 & 4.731e-04 & 1.62 & 1.424e-02  & 0.76  & 1.198e-02  & 0.96 \\
					\hline
				\end{tabular}
				\caption{Errors and convergence orders obtained using ${\bf CPDG1}$ scheme measured in different norms with $\tau=h^{2}$ for Example \ref{Ex5}.}
				\label{ch7:Tab2*}
			\end{table}
		}
		
		In Table \ref{ch7:Tab2*}, we present the convergence rates of the errors in electric field and potential function of the ${\bf CPDG1}$ scheme with $\tau=h^{2}$ for the values of $l=\{\frac{1}{10},\frac{2}{3}\}$. As presented in Theorems \ref{SemHcurllow} and \ref{optL2low}, we observe the convergence rates of $\mathcal{O}(h^{1+l})$ in the ${\bf L}^{2}$-norm for the error in electric field and $\mathcal{O}(h^{l})$ in the discrete energy norm for the electric field error and the same rate in the $L^{2}$-norm for the potential error. 
	\end{exm}
	\begin{exm}\label{Ex3}
		\rm{
			This example discusses the convergence of the scheme {\bf CPDG1} for a Maxwell problem in a 3D domain  $\Omega=(0,1)^{3}$ with the parameters $\alpha=\epsilon=\sigma=1$. The exact solution is selected as (cf. \cite{da2022virtual})
			\begin{equation*}
				{\bf E}=t\nabla\times{\bf }\boldsymbol{\phi}+t^2\boldsymbol{\psi},\, P=0,
			\end{equation*}	
			where,
			\begin{eqnarray*}
				\boldsymbol{\phi}(x,y,z)=
				\begin{pmatrix}
					\sin^{2}(\pi x)\, y^{2}(1-y)^{2}\, z^{2}(1-z)^{2} \\[6pt]
					x^{2}(1-x)^{2}\, \sin^{2}(\pi y)\, z^{2}(1-z)^{2} \\[6pt]
					x^{2}(1-x)^{2}\, y^{2}(1-y)^{2}\, \sin^{2}(\pi z)
				\end{pmatrix}, 
				\boldsymbol{\psi}(x,y,z)
				= \nabla\big(\sin(\pi x)\sin(\pi y)\sin(\pi z)\big).			
			\end{eqnarray*}
			\begin{table}[!ht]
				\centering
				\renewcommand{\arraystretch}{1.1}
				\setlength{\tabcolsep}{5pt}
				\begin{tabular}{c c c c c c c c}
					\hline
					$k$ & $h$ & $\norm{{\bf E}^{N} - \mathtt{E                                                                                                                                                                                                                                                                                                                                                                                                                                                                                                                                                                                                                                                                                                                              }_{h}^{N}}$ & Order & 
					$\tnorm{{\bf E}^{N} - \mathtt{E}_{h}^{N}}_{1}$ & Order & 
					$\norm{P^{N}-\mathtt{P}_{h}^{N}}$ & Order \\
					\hline
					& 1/2  & 6.19e-01      & ---  & 7.26e-01 & ---   & 1.15e+00       & ---  \\
					& 1/4  & 1.61e-01  & 1.94 & 4.95e-01  & 0.55   & 3.71e-01 & 1.64  \\
					1 & 1/6  & 7.47e-02  & 1.89 & 4.11e-01  & 0.46  & 2.95e-01  & 0.56 \\
					& 1/8 & 4.19e-02  & 2.01 & 3.40e-01  & 0.66  & 2.22e-01 & 0.99 \\
					& 1/10 & 2.67e-02  & 2.02 & 2.87e-01  & 0.76  &	1.77e-01  & 1.01 \\
					\hline
					& 1/2  & 1.26e-01  & ---  & 2.38e-01  & ---   & 5.04e-01  & ---  \\
					& 1/4  & 1.46e-02  & 3.11 & 8.90e-02  & 1.42  & 4.51e-02  & 3.48  \\
					2 & 1/6  & 4.46e-03  & 2.92 & 4.43e-02
					& 1.72  & 2.01e-02  & 2.00 \\
					& 1/8 & 1.92e-03  & 2.93 & 2.63e-02  & 1.81  & 1.13e-02
					& 2.00 \\
					\hline
				\end{tabular}
				\caption{Errors and convergence orders obtained using ${\bf CPDG1}$ scheme measured in different norms for Example \ref{Ex3} with $\tau=h^{k+1}$.}
				\label{Tab8}
			\end{table}
			We enlist the observations made from Table \ref{Tab8} below
			\begin{itemize}
				\item The errors in the electric field exhibit optimal convergence in both ${\bf L}^{2}$ and discrete energy norms when we set $\tau = h^{k+1}$. Similarly, the errors in the potential function, measured using the $L^{2}$-norm, also demonstrate optimal convergence rates. This confirms our theoretical predictions. The selection of \(\tau\) was influenced by Example \ref{Ex2}, which indicated that when \(\rho \neq 0\), it is essential to set \(\tau = h^{k+1}\) to achieve optimal convergence.
				
				\item In \cite{da2022virtual}, a lowest-order VEM for the time-dependent Maxwell equations in a first-order formulation was discussed. In numerical test case 1 of \cite{da2022virtual}, the convergence of the VEM numerical scheme was demonstrated, where a rate of $\mathcal{O}(h)$ was observed for the error of the electric field in the ${\bf L}^{2}$-norm. However, for the same example, we achieve the rate $\mathcal{O}(h^{2})$ using linear dG elements, which is indeed optimal.
			\end{itemize}
		}
	\end{exm}
	\begin{exm}\label{Ex4}
		\rm{
			In this example, we study the propagation of the numerical electric fields obtained for the Maxwell system \eqref{model4} with the parameters given as $\epsilon=3$, $\sigma=1$ and $\mu=\frac{1}{5}$. The exact solution is unknown here but the forcing term and initial data are supplied to be
			\begin{eqnarray*}
				{\bf u}=(\sin(\pi x)\sin(\pi y),x^2y^2(1-x)^2(1-y)^2),\,{\bf v}={\bf f}=0.
			\end{eqnarray*} 
			Further, we set $\rho=(1+t^2)(\pi\cos(\pi x)\sin(\pi y)+2x^2y(1-x)^2(1-y)^2+2x^2y^2(1-x)^2(1-y))$. It is clear that the initial wave speed is zero. In Figures \ref{Fig3} - \ref{Fig5}, we demonstrate the propagation of electric fields across time levels $T=1,2,10$ by supplementing the contour and surface plots of the electric fields computed using linear elements and $\tau=2h$ with $h=\frac{1}{16}$. It is observed that the amplitude of the electric fields increases with rise in time.
			\begin{figure}[!ht]
				\centering
				\includegraphics[width=6.00cm, height=5cm]{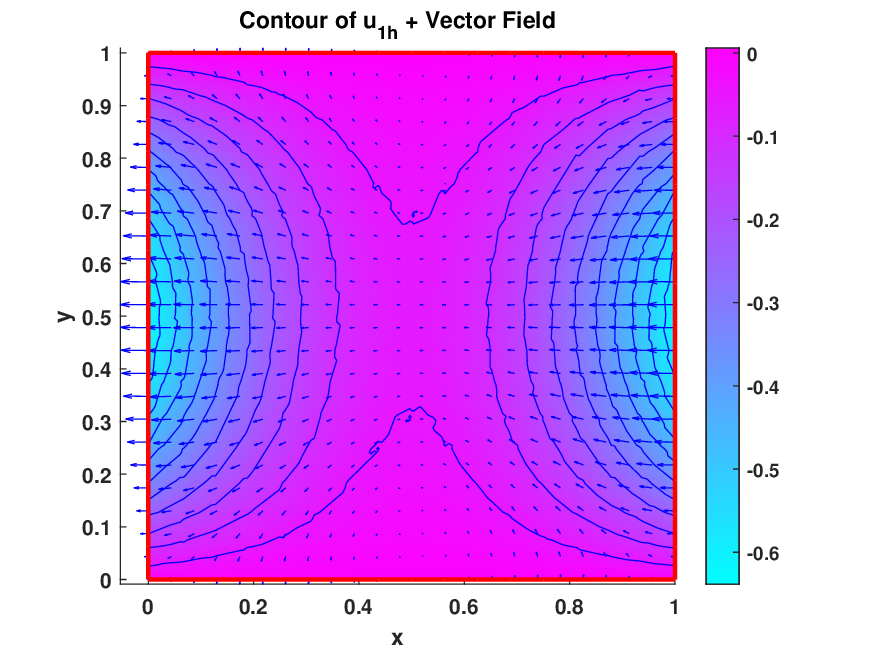}  
				\includegraphics[width=6.00cm, height=5cm]{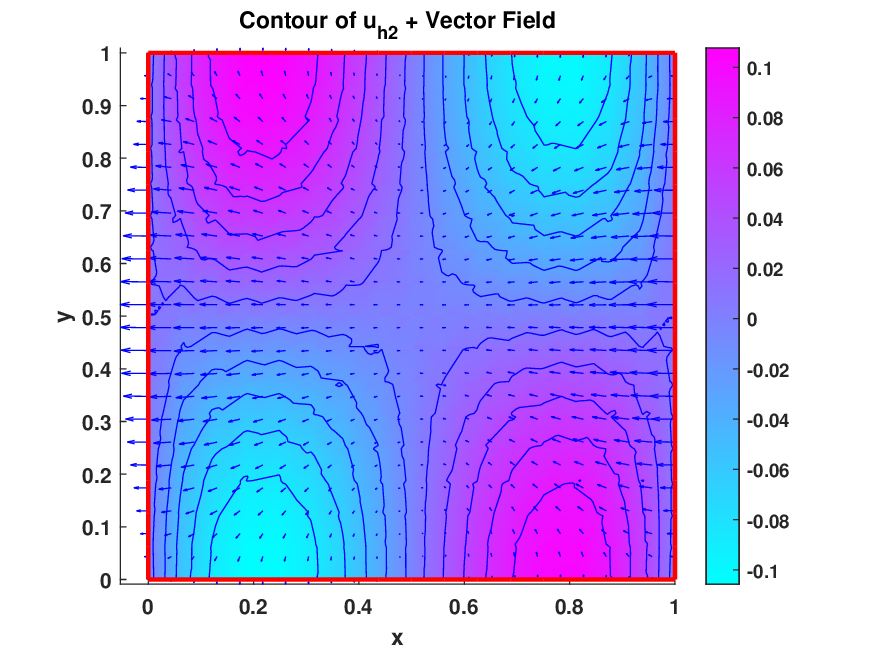}  	  		
				\includegraphics[width=6.00cm, height=5cm]{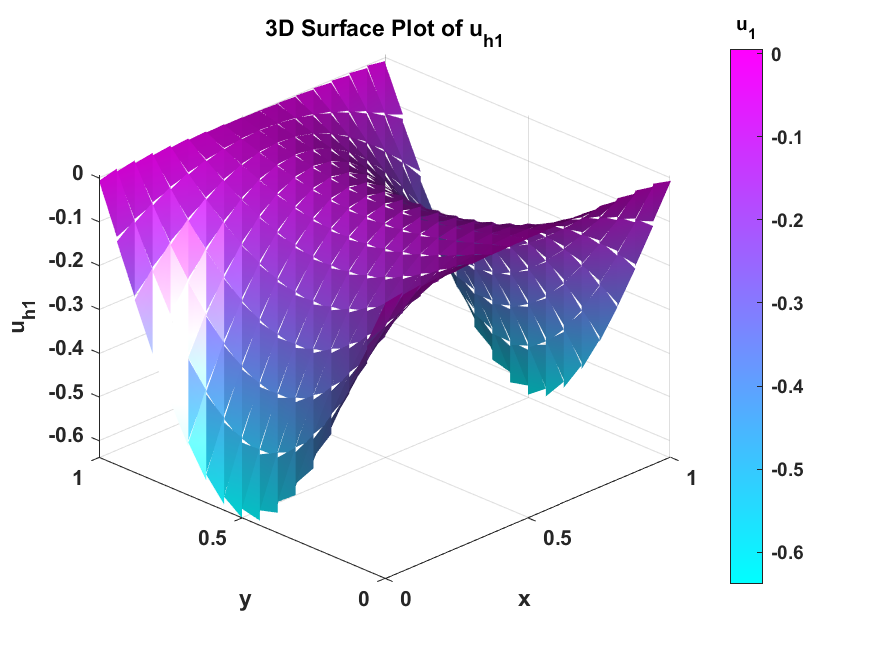}
				\includegraphics[width=6.00cm, height=5cm]{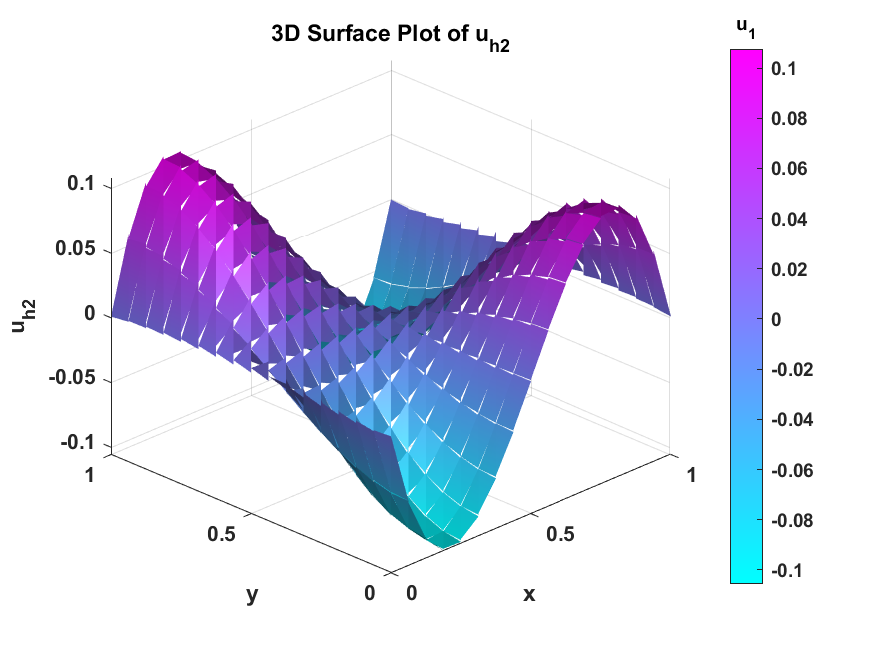}
				\caption{Contour and surface plots of electric field components for Example \ref{Ex4} computed using {\bf CPDG1} scheme with $k=1$, $\tau=2h$ and $T=1$.}
				\label{Fig3}
			\end{figure}	
			
			\begin{figure}[!ht]
				\centering
				\includegraphics[width=6.00cm, height=5cm]{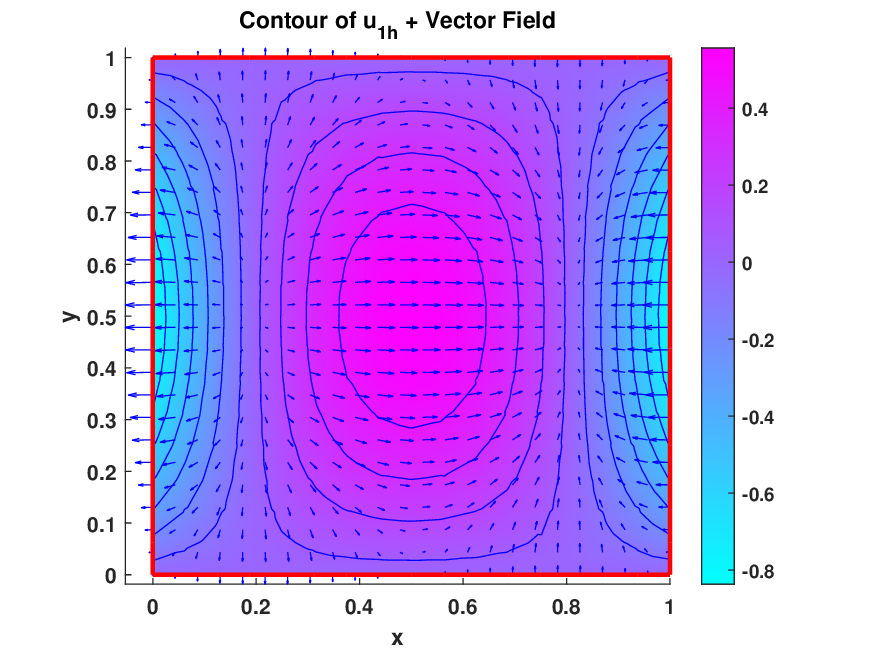}  
				\includegraphics[width=6.00cm, height=5cm]{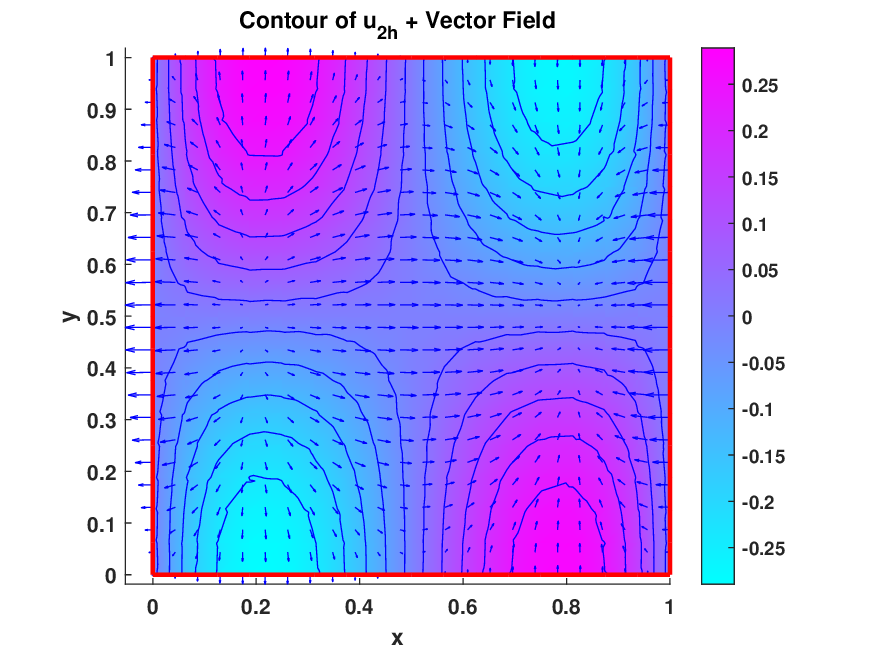}  	  		
				\includegraphics[width=6.00cm, height=5cm]{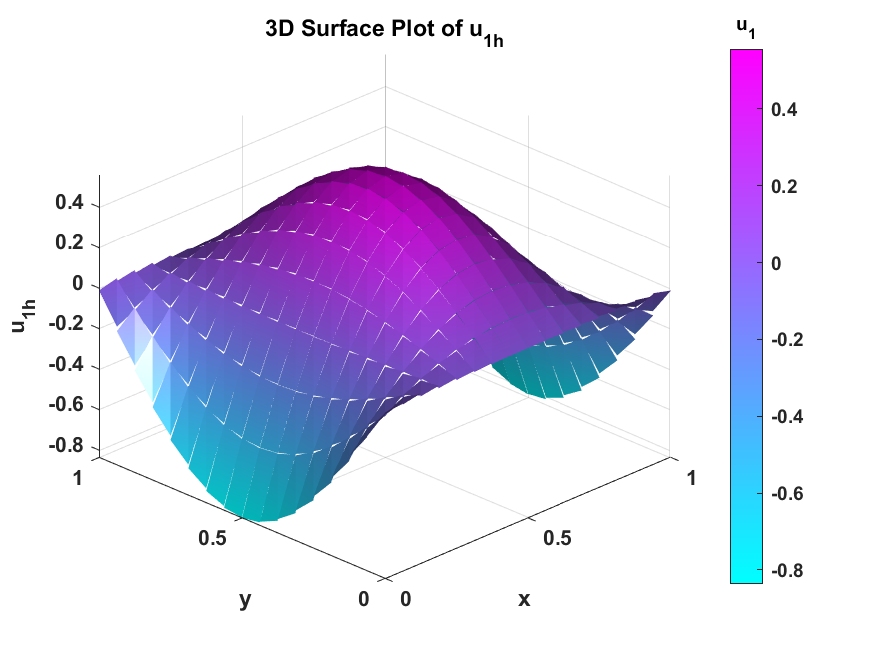}
				\includegraphics[width=6.00cm, height=5cm]{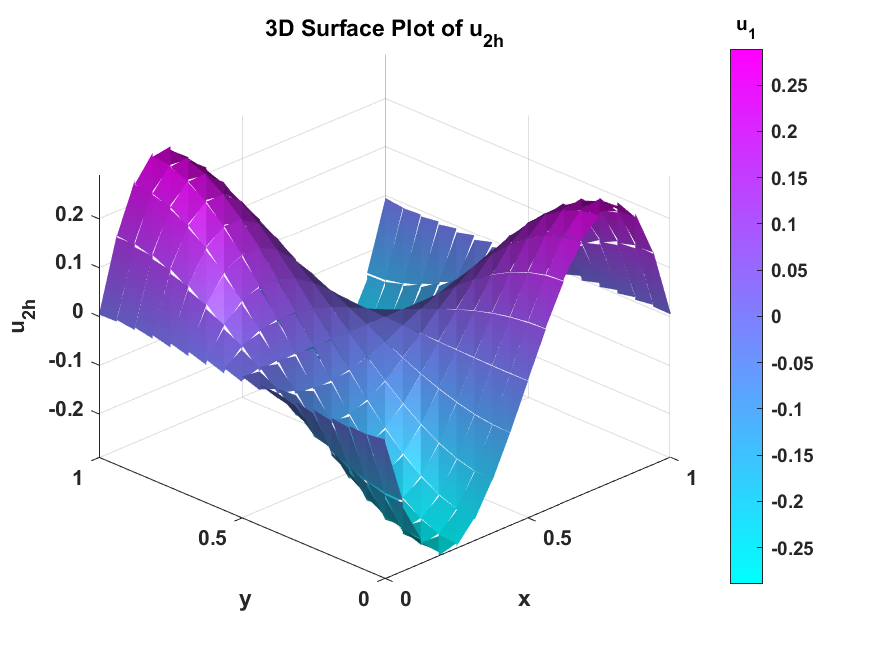}
				\caption{Contour and surface plots of electric field components for Example \ref{Ex4} computed using {\bf CPDG1} scheme with $k=1$, $\tau=2h$ and $T=2$.}
				\label{Fig4}
			\end{figure}	
			\begin{figure}[!ht]
				\centering
				\includegraphics[width=6.00cm, height=5cm]{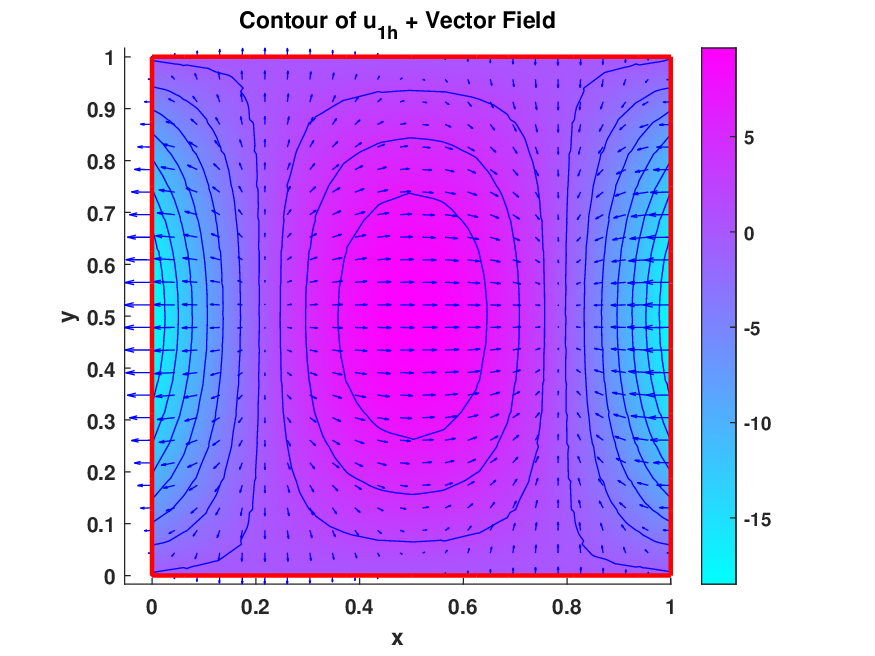}  
				\includegraphics[width=6.00cm, height=5cm]{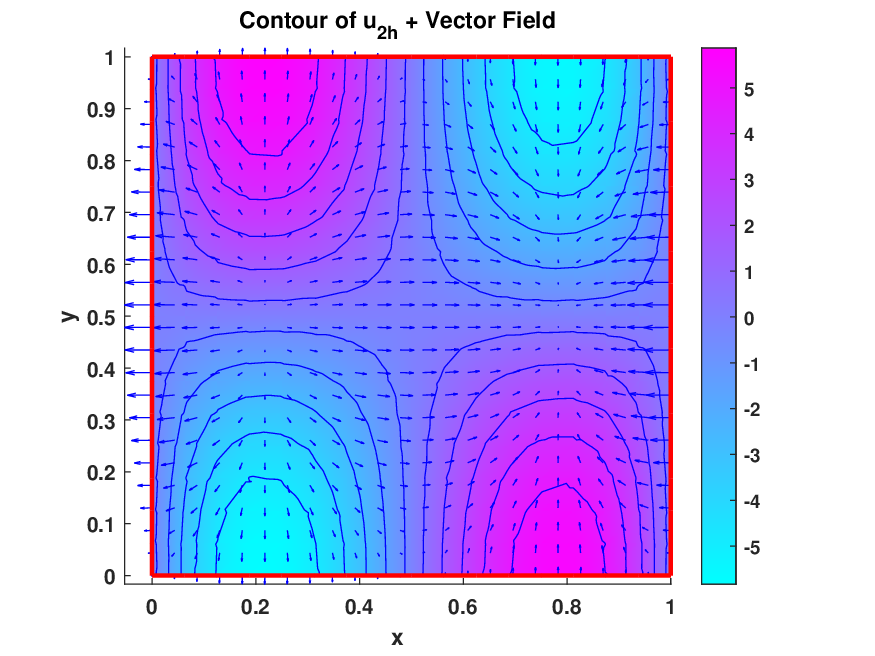}  	  		
				\includegraphics[width=6.00cm, height=5cm]{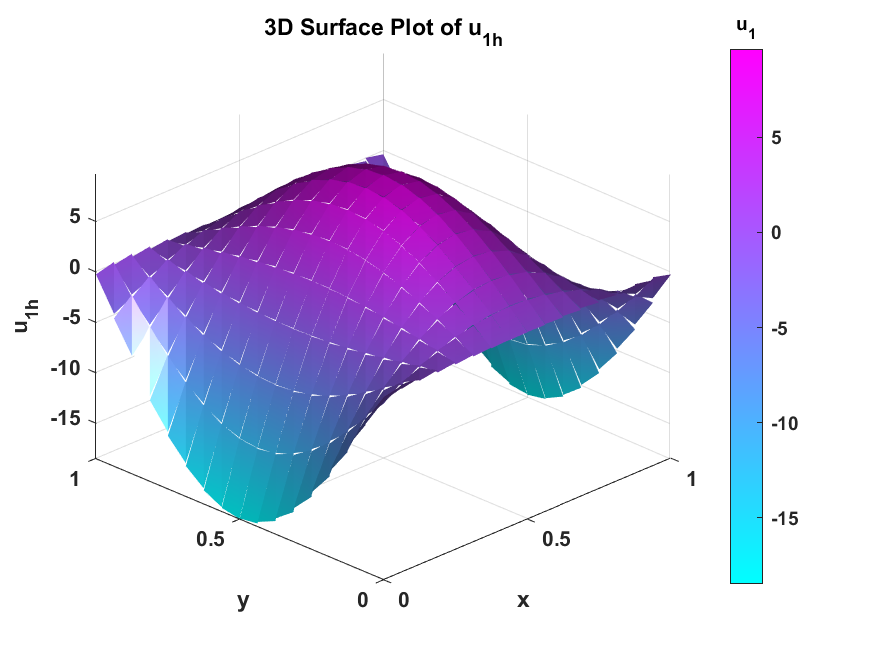}
				\includegraphics[width=6.00cm, height=5cm]{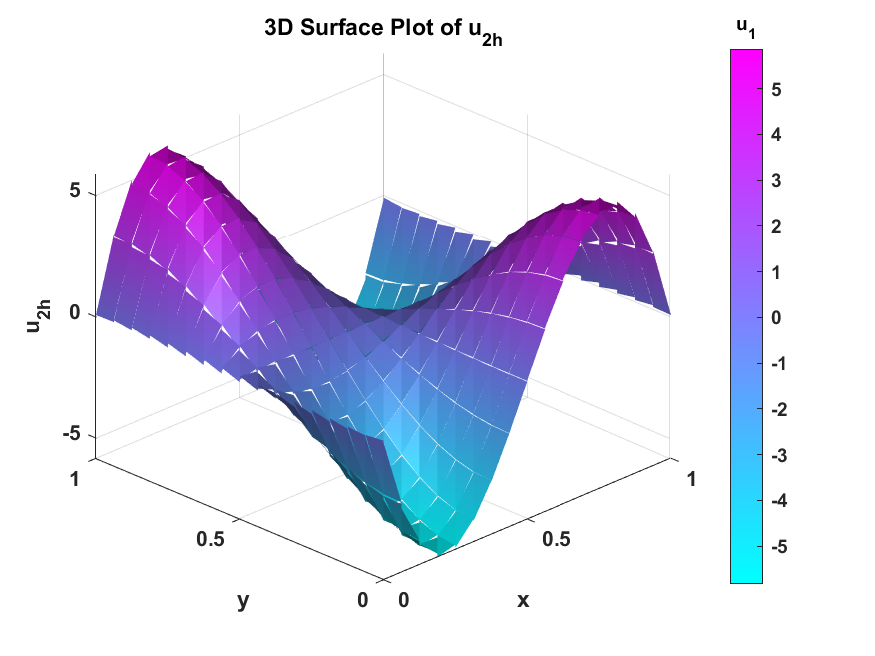}
				\caption{Contour and surface plots of electric field components for Example \ref{Ex4} computed using {\bf CPDG1} scheme with $k=1$, $\tau=2h$ and $T=10$.}
				\label{Fig5}
			\end{figure}	
		}
	\end{exm}
	\section{\normalsize Conclusion}\label{sec6}
	We have proposed a parameter-free discontinuous Galerkin finite element scheme for the second-order time-dependent Maxwell equations. Further, we establish the stability of the continuous-in-time scheme and conduct a rigorous error analysis to achieve optimal convergence rates for the electric field variables in ${\bf L}^{2}$ and discrete energy norms, as well as for the potential function in the $L^{2}$-norm. The main idea for obtaining the ${\bf L}^{2}$-norm error estimate for the electric field hinges on the Ritz (elliptic)  projection pair as introduced in \eqref{Ritz}. Being a saddle point hyperbolic problem, the analysis requires delicate handling by using the discrete inf-sup condition (cf. Lemma \ref{infsup}), unlike the standard approaches of assuming that the charge conservation law holds implicitly as done in \cite{mohapatra2025new,ciarlet1999fully,monk1992analysis}. Two time integration schemes are introduced, and their convergence performance is thoroughly examined through 2D and 3D numerical experiments. It is revealed that the ${\bf CPDG1}$ and ${\bf CPDG2}$ both behave as second-order accurate schemes in temporal direction when the electric fields are divergence-free. However, the accuracy reduces by one for the general case for both the proposed complete discrete schemes and the theoretical analysis of this corresponding phenomena is still open. Also, our findings indicate that the proposed methods are more computationally efficient for Maxwell problems compared to the recently developed WG methods in \cite{qi2025decoupled}.
	
		\section*{\normalsize Funding} The Ministry of Education, Government of India, provided funding for the first
		author under the Prime Minister’s Research Fellowship (PMRF) (Ref: PMRF
		ID-1902166). The second author gratefully acknowledges the research support
		of the Science and Engineering Research Board (SERB), Govt. of India, through
		the Grant vide CRG Project no. CRG/2023/006232.
	
	\section*{\normalsize Data Availability}
	There is no data associated with this manuscript.
	
	\section*{\normalsize Declarations}
	
	\subsection*{\normalsize Conflict of interest} The authors declare no competing interests.
	
\end{document}